% SIAM Article Template
\documentclass[]{siamart220329}

% Information that is shared between the article and the supplement
% (title and author information, macros, packages, etc.) goes into
% ex_shared.tex. If there is no supplement, this file can be included
% directly.

% SIAM Shared Information Template
% This is information that is shared between the main document and any
% supplement. If no supplement is required, then this information can
% be included directly in the main document.

% Packages and macros go here
\usepackage{lipsum}
\usepackage{amsfonts}
\usepackage{graphicx}
\usepackage{epstopdf}
\ifpdf
  \DeclareGraphicsExtensions{.eps,.pdf,.png,.jpg}
\else
  \DeclareGraphicsExtensions{.eps}
\fi
\usepackage[useregional]{datetime2}
% Add a serial/Oxford comma by default.

% Used for creating new theorem and remark environments
\newsiamremark{remark}{Remark}
\newsiamremark{hypothesis}{Hypothesis}
\crefname{hypothesis}{Hypothesis}{Hypotheses}
\newsiamthm{claim}{Claim}

% Sets running headers as well as PDF title and authors
\headers{A Novel Gradient Methodology}{Varner and Patel}

% Title. If the supplement option is on, then "Supplementary Material"
% is automatically inserted before the title.
\title{A Novel Gradient Methodology with Economical Objective Function Evaluations for Data Science Applications
\thanks
{
  Submitted to the editors \today.
  \funding{}
}
}

% Authors: full names plus addresses.
\author{
Christian Varner\thanks{Department of Statistics, University of Wisconsin Madison, Madison, WI 
  (\email{cvarner@wisc.edu}, \email{vivak.patel@wisc.edu}).}
\and 
Vivak Patel\footnotemark[2]}

\usepackage{amsopn}

%%% Local Variables: 
%%% mode:latex
%%% TeX-master: "ex_article"
%%% End: 

%%% Adding
\usepackage[utf8]{inputenc}
\usepackage{xcolor}
% %\usepackage{mathabx}
\usepackage{mathtools}
\usepackage{enumitem}
\usepackage{amsmath}
%\usepackage{amsthm}

% % \newtheorem{definition}{Definition}
% % \newtheorem{theorem}{Theorem}
% % \newtheorem{corollary}{Corollary}
% % \newtheorem{lemma}{Lemma}

{
\theoremstyle{plain}
\newtheorem{assumption}{Assumption}
\newtheorem{property}{Property}
}

% % { 
% % \theoremstyle{plain}
% % \newtheorem*{remark}{Remark}
% % }

% % {
% %   \theoremstyle{plain}
% %   \newtheorem{proposition}{Proposition}
% % }
\usepackage{natbib}
\setcitestyle{numbers}

\usepackage[para,online,flushleft]{threeparttable}
\usepackage{amsfonts}
\usepackage{amssymb}
% %\usepackage{ dsfont }
% %\usepackage{ mathrsfs } 
%usepackage{color,soul}
% %\usepackage{subcaption}
\usepackage{float}
\usepackage{algpseudocode}
\usepackage{subfiles}
\usepackage{indentfirst}
% %\usepackage{lineno}
\usepackage{graphicx}
\usepackage{tabularx}
\usepackage{caption}
\captionsetup{font=footnotesize}
\usepackage{array,booktabs}
\usepackage{subcaption}
\usepackage{ragged2e}
\usepackage{tcolorbox}
\tcbuselibrary{breakable}
\tcbset{%any default parameters
  width=0.99\textwidth,
  halign=justify,
  center,
  breakable,
  colback=white    
}
\usepackage{tikz}
\usetikzlibrary{plotmarks}
\usetikzlibrary{patterns}
\usepackage{pgfplotstable}
\usepackage{pgfplots}
\pgfplotsset{compat=1.15}
\usepgfplotslibrary{statistics}
\pgfplotsset{boxplot legend/.style={
    legend image code/.code={
        \draw[#1] (0cm,0cm) rectangle (0.6cm,0.3cm)
        (0.3cm,0cm) -- (0.3cm,-0.1cm) (0.1cm,-0.1cm) -- (0.5cm,-0.1cm)
        (0.3cm,0.3cm) -- (0.3cm,0.4cm) (0.1cm,0.4cm) -- (0.5cm,0.4cm);
    },
}}
\newcounter{iloop} 

\usepackage{pifont}% http://ctan.org/pkg/pifont

\usepackage{caption}
\captionsetup[figure]{font=small}
\usepackage[export]{adjustbox}
\usepackage[margin=1in]{geometry}

%\usepackage[style=alphabetic,backend=biber]{biblatex}
%\addbibresource{reference.bib}

\crefname{assumption}{assumption}{assumptions}
\crefname{property}{property}{properties}
\crefname{definition}{definition}{definitions}
\crefformat{footnote}{#2\footnotemark[#1]#3}

\newcommand{\defeq}{\vcentcolon=}
\newcommand{\eqdef}{=\vcentcolon}
\newcommand{\inlinenorm}[1]{ \Vert #1 \Vert}
\newcommand{\norm}[1]{\left\Vert #1 \right\Vert}

\newcommand{\bigpar}[1]{\left( #1 \right)}

\DeclareMathOperator{\obj}{obj}
\DeclareMathOperator{\gra}{grad}
\DeclareMathOperator{\low}{lower}
\DeclareMathOperator{\upp}{upper}
\DeclareMathOperator{\iter}{iter}
\DeclareMathOperator{\exit}{exit}

\DeclareMathOperator{\true}{true}

\DeclareMathOperator{\reference}{ref}

% Optional PDF information
% \ifpdf
% \hypersetup{
%   pdftitle={An Example Article},
%   pdfauthor={D. Doe, P. T. Frank, and J. E. Smith}
% }
% \fi

% The next statement enables references to information in the
% supplement. See the xr-hyperref package for details.

%\externaldocument{GD_Triggers_supplement}

% FundRef data to be entered by SIAM
%<funding-group specific-use="FundRef">
%<award-group>
%<funding-source>
%<named-content content-type="funder-name"> 
%</named-content> 
%<named-content content-type="funder-identifier"> 
%</named-content>
%</funding-source>
%<award-id> </award-id>
%</award-group>
%</funding-group>
%results are 6 pages

\begin{document}

\maketitle

% REQUIRED
\begin{abstract}
Gradient methods are experiencing a growth in methodological and theoretical developments owing to the challenges posed by optimization problems arising in data science. However, such gradient methods face diverging optimality gaps or exploding objective evaluations when applied to optimization problems with realistic properties for data science applications.
In this work, we address this gap by developing a generic methodology that economically uses objective function evaluations in a problem-driven manner to prevent optimality gap divergence and avoid explosions in objective evaluations. 
Our methodology allows for a variety of step size routines and search direction strategies. Furthermore, we develop a particular, novel step size selection methodology that is well-suited to our framework.
We show that our specific procedure is highly competitive with standard optimization methods on CUTEst test problems.
We then show our specific procedure is highly favorable relative to standard optimization methods on a particularly tough data science problem: learning the parameters in a generalized estimating equation model.
Thus, we provide a novel gradient methodology that is better suited to optimization problems from this important class of data science applications.
\end{abstract}

% REQUIRED
\begin{keywords}
Gradient Descent, Nonconvex, Local Lipschitz Smoothness, Novel Step Size, Data Science
\end{keywords}

% REQUIRED
\begin{MSCcodes}
90C30, 65K05, 68T09
\end{MSCcodes}

\section{Introduction} \label{sec:Introduction}
Gradient methods are experiencing a growth in methodological and theoretical developments in order to address the needs of data science applications, in which full objective function and gradient function evaluations are expensive to compute \cite[][\S 3]{bottou2018optimization}. For example, gradient methods that never make (or make pre-specified) objective function evaluations are being rejuvenated and actively developed 
(Gradient descent with diminishing step-size \cite{bertsekas1999,josz2023,patel2023gradient};
Gradient descent with constant step-size \cite{armijo1966minimization,zhang2020gradientclipping,josz2023,li2023convex};
Barzilai-Borwein Methods \cite{barzilai1988,burdakov2019stabilized};
Nesterov's Acceleration Method \cite{nesterov2012GradientMF,li2023convex};
Bregman Distance Methods \cite{bauschke2017descentlemma};
Negative Curvature Method \cite{curtis2018exploiting};
Lipschitz Approximation  \cite{malitsky2020adaptive,malitsky2023adaptive};
Weighted Gradient-Norm Damping \cite{wu2020wngrad,grapiglia2022AdaptiveTrust}; 
Adaptively Scaled Trust Region \cite{gratton2022firstorderOFFO,gratton2023OFFOSecondOrderOpt};
Polyak's Method \cite{polyak1969}).
%and are particularly appealing for optimization problems appearing in }
Such gradient methods generally enjoy inexpensive per-iteration costs, and global convergence guarantees when the gradient function is globally Lipschitz smooth---that is, the gradient function is Lipschitz continuous with a common rank for any compact set in the domain.\footnote{Such objective functions are referred to as Lipschitz smooth in the literature. Here we will refer to such functions as globally Lipschitz smooth to distinguish them from the relevant locally Lipschitz smooth case, in which the rank can depend on the choice of compact set.}
%PFor a canonical example, the gradient method with diminishing step-sizes never evaluates the objective function, and converges globally under the aforementioned global Lipschitz smoothness condition \cite[Proposition 1.2.4]{bertsekas1999}.

Unfortunately, the globally Lipschitz smoothness condition does not apply to smooth objective functions arising in data science problems, such as learning for a feed forward network \cite{patel2022globalconvergence}, learning for a recurrent neural network \cite{patel2022globalconvergence}, factor analysis for pattern recognition, inverse Gaussian regression, and estimating parameters in generalized estimating equations \cite[see][\S 2]{varner2024challenges}.
To exacerbate this issue, for objective functions satisfying the realistic locally Lipschitz smoothness condition \cite[i.e., the aforementioned examples; see][Table 1.1]{varner2024challenges}, \textit{all} the aforementioned gradient methods are shown to generate iterates with diverging optimality gaps \cite[see][Table 1.2 and \S 3]{varner2024challenges}.

From a reliability perspective, gradient methods that make use of objective function evaluations to ensure descent seem more attractive, as they naturally prevent such divergence 
(Armijo's Backtracking Method \cite{armijo1966minimization,zhang2020firstorder};
Newton's Method with Cubic Regularization \cite{nesterov2006cubic};
Lipschitz Constant Line Search Methods \cite{nesterov2012GradientMF,curtis2018exploiting}; 
Adaptive Cubic Regularization \cite{cartis2011cubic1,cartis2011cubic2}).\footnote{Such gradient methods can be shown to require only a finite number of accepted iterations to achieve a certain threshold for the gradient function, when the gradient function is globally Lipschitz continuous. Interestingly, such guarantees for this class of gradient methods appears to be readily extendable to the case where the gradient function is locally Lipschitz continuous \cite[Theorem 5]{zhang2020firstorder}. Furthermore, under the assumption of a globally Lipschitz continuous \textit{Hessian} function, these gradient methods seem amenable to results that control the number of objective function evaluations prior to an accepted iterate \cite[Theorem 2.1]{cartis2011cubic2}.}
However, for objective functions satisfying the locally Lipschitz smoothness condition, these gradient methods can grow exponentially in the number of objective function evaluations per accepted iterate \cite[see][Table 1.3 and \S 4]{varner2024challenges}. For our motivating class of important data science applications where objective evaluations are expensive yet gradient evaluations are inexpensive,\footnote{Learning the parameters in a generalized estimating equation model is the key application. A recent application of such problems include developing COVID policies \cite{woodruff2021riskfactors,bergwerk2021covid}. See \cite[Chapter 3]{hardin2012gee} for an introduction.} these gradient methods are infeasible because of the potential explosion in objective evaluation complexity.  

For such a class of problems, gradient methods that use objective evaluations in a problem-driven manner to prevent divergence and control objective evaluation complexity seem to be better choices.  
One promising gradient method does exactly this by switching between using a (nonmonotone) line search in some iterations and not using objective function evaluations in other iterations \cite{grippo1991class}. 
However, this method suffers in two ways. First, this method requires a rapid decay in the gradient function to avoid line search. This may not be possible as the method approaches a solution for a poorly conditioned problem, resulting in potentially many expensive objective function evaluations. Second, the method requires the compactness of the level set, which fails to hold, say, for linearly separating two perfectly separable sets of data points, as any scaling of the linear discriminant is a solution. In other words, even this promising gradient method falls short for important data science applications.

\textit{To summarize, existing gradient methods face computational or reliability challenges under realistic settings for optimization problems in important data science applications.} To begin addressing this gap, we introduce a novel framework that uses objective evaluations in an economical, problem-driven manner (see \cref{result-objective-function}), prevents divergence of the optimality gap (see \cref{result-gradient}), and allows for many procedures with different step size strategies and search direction strategies (e.g., Quasi-Newton directions). Our methodology achieves this by using novel, inexpensive tests on the behavior of the iterates to trigger when to evaluate the objective function, and uses this objective information to enforce descent with a generalization of Armijo's condition (see \cref{subsec-method}). Furthermore, we introduce a novel step size scheme that produces a highly effective procedure when paired with a negative gradient search direction (see \cref{sec:StepSize}). Through experiments, we show that our procedure is highly competitive against standard approaches on unconstrained optimization problems from the CUTEst suite (see \cref{sec:Results-cutest}). 
We then show that our procedure is dominant on optimization problems from our motivating data science applications (see \cref{sec:Results-gee}).
Consequently, to the best of our knowledge, we provide a general methodology---along with a novel, sophisticated step size procedure---that is practical and rigorously-justified for solving important optimization problems arising in data science applications. In turn, our methodology allows for the reliable and practical solution of a wide variety of optimization problems arising in data science.

\section{Problem Formulation} \label{sec:Problem}
Motivated by the properties of common data science problems, we aim to (locally) solve
\begin{equation} \label{problem}
\min_{\theta \in \mathbb{R}^n} F(\theta),
\end{equation}
where the objective function, $F: \mathbb{R}^{n} \to \mathbb{R}$, satisfies the following assumptions.
\begin{assumption} \label{as-bounded below}
The objective function, $F$, is bounded below by some constant $F_{l.b.} > -\infty$.
\end{assumption}

\begin{assumption} \label{as-loc-lip-cont}
    $\forall \theta \in \mathbb{R}^n$, the gradient function $\dot F(\theta) := \nabla F(\psi) \vert_{\psi=\theta}$ exists and is locally Lipschitz continuous.
\end{assumption}

For clarity, we define local Lipschitz continuity as follows.
\begin{definition} \label{def-lipschitz}
A function $G: \mathbb{R}^n \to \mathbb{R}^n$ is locally Lipschitz continuous if for every $\theta \in \mathbb{R}^n$, there exists an open ball of $\theta$, $\mathcal{N}$, and a constant $\mathcal L \geq 0$, such that $\forall \phi, \psi \in \mathcal{N}$,
\begin{equation} \label{eqn-lipschitz-bound}
\norm{ \dot G(\phi) - \dot G(\psi)}_2 \leq \mathcal L \norm{ \phi - \psi}_2.
\end{equation}
\end{definition} 
An equivalent definition for a function $G$ to be locally Lipschitz continuous is if for every compact set $C \subset \mathbb{R}^p$, there exists an $\mathcal L(C) \geq 0$ such that \cref{eqn-lipschitz-bound} holds for all $\phi, \psi \in C$.

\begin{tcolorbox}
\centering
We never assume that we have knowledge of the local Lipschitz rank in our gradient method.
\end{tcolorbox}

\section{Our General Method} \label{sec:Algo}
Motivated by the theoretical analysis technique developed in \cite{patel2022globalconvergence, patel2022stoppingcriteria,patel2023gradient} around triggering events, we present a novel gradient method that develops these theoretical triggering events into practical tests to monitor the behavior of the iterate sequence, determine when to check the objective function, and when to adapt algorithm parameters. 
%In \cref{subsec-intuition}, we present the intuition of this procedure and an example procedure of the method. 
In \cref{subsec-method}, we present our general method and important categories of variations such as the choice of the step size procedure.
In \cref{subsec-global-convergence}, we conduct a global convergence analysis of our general method.

\subsection{Our Method} \label{subsec-method}

\begin{algorithm}[!ht]
    \caption{Our General Method}
    \label{alg-general-algorithm}
    \begin{algorithmic}[1]
    \Require $F, \dot F, \theta_{0}, \epsilon > 0$    
    \Require $\mathrm{StepDirection()}$ \Comment{Cannot require additional objective evaluations; may be stateful}
    \Require $\mathrm{StepSize()}$ \Comment{Cannot require additional objective evaluations; may be stateful}
    \Require $\sigma_{\low} \in (0,1)$ \Comment{Reduction factor for step size scaling} 
    \Require $\sigma_{\upp} \geq 1$ \Comment{Increase factor for step size scaling}
    \Require $w \in \mathbb{N}$ \Comment{Number of objective values used in nonmonotone search}
    \Require $\rho \in (0,1)$ \Comment{Relaxation parameter in nonmontone Armijo's condition}
     
    % Outer Loop Counter
    \State $k \leftarrow 0$ \Comment{Outer loop counter}
   
	% Step Size 
    \State $\delta_k \leftarrow 1$ \Comment{Step size scaling}
      
    % Objective Function Value
    \State $\tau_{\obj}^0 \leftarrow F(\theta_0)$ \Comment{Nonmontone search threshold}
    \State Select $\tau_{\iter,\exit}^0, \tau_{\iter,\max}^0, \tau_{\gra,\low}^0, \tau_{\gra,\upp}^0$
    \Comment{Cannot require additional objective evaluations}
    
    % Outer Loop
    \While{$\inlinenorm{ \dot F(\theta_k) } > \epsilon$} \Comment{Outer loop}
    
    % Inner Loop Counter
    \State $j, \psi_0^k \leftarrow 0, \theta_k$ \Comment{Inner loop counter and initialization}
    
   	% Inner Loop
    \While{$\true$} \Comment{Inner loop}
    		%Step Direction and Size
    		\State $\gamma_j^k, \alpha_j^k \leftarrow \mathrm{StepDirection()}, \mathrm{StepSize()}$
    		
    		%Check if in the exit or growth in gradient
    		\If{ $\inlinenorm{ \psi_j^k - \theta_k }_2 > \tau_{\iter,\exit}^k$ 
    			\textbf{or} $\inlinenorm{ \dot F(\psi_j^k)}_2 \not\in (\tau_{\gra,\low}^k, \tau_{\gra,\upp}^k)$ 
    			\textbf{or} $j == \tau_{\iter,\max}^k$} 
    			\If{ $F(\psi_j^k) \geq \tau_{\obj}^k + \rho \delta_k \alpha_0^k \dot F(\theta_k)^\intercal \gamma_0^k $ }
    			\Comment{Fails nonmonotone Armijo condition}
    				\State $\theta_{k+1}, \delta_{k+1} \leftarrow \theta_k, \sigma_{\low} \delta_k$
    				\Comment{Reset iterate with reduced step size scaling}
    				\State Select $\tau_{\iter,\exit}^{k+1}, \tau_{\iter,\max}^{k+1}, \tau_{\gra,\low}^{k+1}, \tau_{\gra,\upp}^{k+1}$
    				\Comment{Without more objective evaluations}

    			\ElsIf{ $\inlinenorm{ \dot F(\psi_j)}_2 \leq \tau_{\gra,\low}^k$ }
    				\State $\theta_{k+1}, \delta_{k+1} \leftarrow \psi_j^k, \delta_k$
				\Comment{Accept iterate and leave step size unchanged}    			
    				\State Set $\tau_{\obj}^{k+1} \leftarrow$ by \cref{eqn-nonmonotone-threshold}
				\State Select $\tau_{\iter,\exit}^{k+1}, \tau_{\iter,\max}^{k+1}, \tau_{\gra,\low}^{k+1}, \tau_{\gra,\upp}^{k+1}$
				\Comment{Without more objective evaluations}

    			\Else
    				\State $\theta_{k+1}, \delta_{k+1}, \leftarrow \psi_j^k, \sigma_{\upp}\delta_k$
    				\Comment{Accept iterate and increase step size}
    				\State Set $\tau_{\obj}^{k+1} \leftarrow$ by \cref{eqn-nonmonotone-threshold}

    				\State Select $\tau_{\iter,\exit}^{k+1}, \tau_{\iter,\max}^{k+1}, \tau_{\gra,\low}^{k+1}, \tau_{\gra,\upp}^{k+1}$
			    \Comment{No additional objective evaluations}

    			\EndIf
			\State $k \leftarrow k+1$
    			\State Exit Inner Loop
    		\EndIf
    		
    		\State $\psi_{j+1}^k, j \leftarrow \psi_j^k + \delta_k \alpha_j^k \gamma_j^k, j+1$
    		\Comment{Standard gradient-related method}
    
    \EndWhile %End inner loop

    \EndWhile %End Outer Loop
	\State \Return $\theta_k$    
    \end{algorithmic}
    
\end{algorithm} 

\Cref{alg-general-algorithm} is our novel, flexible framework for solving optimization problems satisfying \cref{as-bounded below,as-loc-lip-cont}. Importantly, \cref{alg-general-algorithm} allows for general choices of subroutines and parameters, which may be tailored to a particular problem. Furthermore, if these subroutines and parameters satisfy the sensible properties that we discuss next, we can provide a general and reasonable convergence analysis.

\paragraph{Step Direction} In \Cref{alg-general-algorithm}, the step direction at $\psi$ is generated by a procedure $\mathrm{StepDirection()}$, which---owing to our motivation of avoiding objective function evaluations---cannot make use of additional objective function evaluations and may be stateful. Because of possible statefulness, $\mathrm{StepDirection}()$ can generate two distinct step directions at a point $\psi$, if $\psi$ is visited at two distinct times. Thus, to control $\mathrm{StepDirection()}$, we require the following properties.
\begin{property} \label{property-stepdirection-negative}
For any compact set $C \subset \mathbb{R}^n$, $\exists \underline g(C) > 0$ such that for any $\gamma$ generated by $\mathrm{StepDirection()}$ at $\psi \in C$
satisfies $-\underline g(C) \inlinenorm{ \dot F(\psi)}_2^2 \geq \dot F(\psi)^\intercal \gamma$.
\end{property}
\begin{property} \label{property-stepdirection-propgrad}
For any compact set $C \subset \mathbb{R}^n$, 
  $\exists \overline g(C) > 0$ such that for any $\gamma$ generated 
  by $\mathrm{StepDirection()}$ at $\psi \in C$ 
  satisfies $\inlinenorm{ \gamma }_2 \leq \overline g(C) \inlinenorm{ \dot F(\psi)}_2$.
\end{property}

We now contextualize \cref{property-stepdirection-negative,property-stepdirection-propgrad} by using some specific examples. First, if $\mathrm{StepDirection}()$ is the negative gradient, then \cref{property-stepdirection-negative,property-stepdirection-propgrad} are satisfied with $\underline g(C) = \overline g(C) = 1$ for any compact $C \subset \mathbb{R}^n$. As another example, consider the case of an objective function that is twice-differentiable, strongly convex, and globally Lipschitz smooth; if the step direction is generated by Newton's method, then $\underline g(C)$ is the reciprocal of the Lipschitz rank, and $\overline g(C)$ is the reciprocal of the strong convexity constant for any compact $C \subset \mathbb{R}^n$ \cite[c.f.][Eq. 1.27]{bertsekas1999}. Thus, \cref{property-stepdirection-negative,property-stepdirection-propgrad} are generalizations of such important special cases, and allow for a broad and useful selection of step directions.\footnote{By \cref{property-stepdirection-propgrad}, if a first-order stationary point is found and accepted, then $\gamma$ generated at this point will be zero and the algorithm will terminate.}

\paragraph{Step Size} In \Cref{alg-general-algorithm}, $\mathrm{StepSize()}$ and $\lbrace \delta_k \rbrace$ determine the step size. As $\mathrm{StepSize()}$ may be stateful (c.f., $\mathrm{StepDirection()}$), we use the following properties.

\begin{property} \label{property-stepsize-upper}
For any compact set $C \subset \mathbb{R}^n$, $\exists \overline{\alpha}(C) > 0$ such that for any $\alpha$ generated by $\mathrm{StepSize()}$ at any $\psi \in C$ satisfies $\alpha \leq \overline\alpha(C)$. 
\end{property}
\begin{property} \label{property-stepsize-lower}
For any compact set $C \subset \mathbb{R}^n$, $\exists \underline{\alpha}(C) > 0$ such that for any $\alpha$ generated by $\mathrm{StepSize()}$ at any $\psi \in C$ satisfies $\alpha \geq \underline\alpha(C)$.
\end{property}

Trivially, \cref{property-stepsize-upper,property-stepsize-lower} are satisfied for constant step size procedures. \Cref{property-stepsize-upper,property-stepsize-lower} can be readily checked for a number of more sophisticated schemes in a variety of contexts. That is to say, \cref{property-stepsize-upper,property-stepsize-lower} appear to encompass a wide spectrum of step size selection schemes.

Finally, $\lbrace \delta_k \rbrace$ also contribute to the step size value. Accordingly, $\delta_{k+1}$ is either the same as $\delta_k$, can be reduced by a factor of $\sigma_{\low} \in (0,1)$, or increased by a factor of $\sigma_{\upp} \geq 1$. Specific values for these parameters are given in \cref{sec:StepSize}.

\paragraph{Nonmonotone Armijo Condition} Recall, the standard Armijo condition accepts a proposed iterate, $\psi$, from a current iterate, $\theta$, if
$F(\psi) \leq F(\theta) + \rho \dot F(\theta)^\intercal(\psi - \theta)$, where $\rho > 0$ is (typically) a small relaxation parameter \cite[see][Eq. 1.11]{bertsekas1999}. In \cite{grippo1986nonmonotone,grippo1989truncated,grippo1991class}, the standard Armijo condition is generalized such that given $w \in \mathbb{N}$ of the most recent iterates, $\lbrace \theta_{k},\ldots,\theta_{k-w+1} \rbrace \subset \mathbb{R}^n$, a proposed iterate $\psi \in \mathbb{R}^n$ is accepted if
$F(\psi) \leq \max \lbrace F(\theta_j) : j=k-w+1,\ldots, k \rbrace + \rho \dot F(\theta_k)^\intercal (\psi - \theta_k)$, where $\rho > 0$ again plays the role of a relaxation parameter. This generalized Armijo condition, called a nonmonotone Armijo condition, allows for a nonmonotonic change in the objective function between iterates when $w > 1$, and reduces to the standard Armijo condition when $w = 1$. 

To compare our nonmonotone Armijo condition to the above versions, we need to define a subsequence of $\mathbb{N}$ for when our outer loop iterates are distinct.
Let
\begin{equation} \label{eqn-iterate-subsequence}
\ell_0 = 0\quad\mathrm{and}\quad \ell_t = \lbrace k > \ell_{t-1}: \theta_k \neq \theta_{\ell_{t-1}} \rbrace, ~ \forall t \in \mathbb{N},
\end{equation}
with the convention that $\ell_t = \infty$ if no finite $k$ can be found to satisfy the property, and $\ell_{t} = \infty$ if $\ell_{t-1} = \infty$. Also, let $L: \mathbb{N} \cup \lbrace 0 \rbrace \to \mathbb{N} \cup \lbrace 0 \rbrace$ such that
\begin{equation} \label{eqn-iterate-subsequence-map}
L(k) = \max \lbrace t : \ell_t \leq k \rbrace,
\end{equation}
which specifies the element of $\lbrace \ell_t \rbrace$ that produced the most recent distinct iterate up to iterate $k$.
With this notation, define
\begin{equation} \label{eqn-nonmonotone-threshold}
\tau_{\obj}^k = \max\left\lbrace F(\theta_{\ell_{\max\lbrace L(k) - w + 1, 0 \rbrace}}),F(\theta_{\ell_{\max\lbrace L(k) - w + 1, 0 \rbrace+1}}),\ldots ,F(\theta_{\ell_{L(k)}}) \right\rbrace,
\end{equation}
which sets $\tau_{\obj}^k$ to the objective of one of the $w$ most recent, distinct outer loop iterates.
To develop a familiarity with these quantities, some properties are collected in the following lemma, and a toy example of their behavior is shown in \cref{figure-objective-threshold-diagram}. %with several cases pointed out after.

\begin{lemma} \label{result-simple-properties-accepted-iterates}
Let $\lbrace \ell_t : t+1 \in \mathbb{N}\rbrace$, $L: \mathbb{N} \cup \lbrace 0 \rbrace \to \mathbb{N} \cup \lbrace 0 \rbrace$, and $\lbrace \tau_{\obj}^k : k+1 \in \mathbb{N} \rbrace$ be defined as in \cref{eqn-iterate-subsequence,eqn-iterate-subsequence-map,eqn-nonmonotone-threshold}, respectively. Then, the following properties hold. 
\begin{remunerate}
\item For any $t+1 \in \mathbb{N}$, if $\ell_t, \ell_{t+1} < \infty$, then $\theta_{\ell_t} = \theta_{\ell_t + 1} = \cdots = \theta_{\ell_{t+1} - 1}$. Hence, $\forall k +1 \in \mathbb{N}$, $\theta_{k} = \theta_{\ell_{L(k)}}$. 
\item For any $t+1 \in \mathbb{N}$, if $\ell_t, \ell_{t+1} < \infty$, then $\tau_{\obj}^{\ell_t} = \tau_{\obj}^{\ell_t + 1} = \cdots = \tau_{\obj}^{\ell_{t+1}-1}$. Hence, $\forall k +1 \in \mathbb{N}$, $\tau_{\obj}^{k} = \tau_{\obj}^{\ell_{L(k)}}$.
\end{remunerate}
\end{lemma}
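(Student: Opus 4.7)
The plan is to exploit the fact that both $\theta_k$ and $\tau_{\obj}^k$ depend on the iteration index $k$ only through the auxiliary index $L(k)$, and then to observe that $L(k)$ is constant on each block of indices $\{\ell_t, \ell_t+1, \ldots, \ell_{t+1}-1\}$. Once this ``constancy on blocks'' property is in hand, both parts of the lemma reduce to reading off the relevant definitions.

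First I would prove the first statement. By \cref{eqn-iterate-subsequence}, $\ell_{t+1}$ is (implicitly) the \emph{smallest} integer strictly greater than $\ell_t$ for which $\theta_{\ell_{t+1}} \neq \theta_{\ell_t}$. Hence, for every integer $k$ with $\ell_t \leq k < \ell_{t+1}$, minimality of $\ell_{t+1}$ forces $\theta_k = \theta_{\ell_t}$, which yields the stated chain of equalities. For the ``hence'' clause, I would fix $k+1 \in \mathbb{N}$ and set $t = L(k)$; by \cref{eqn-iterate-subsequence-map} one has $\ell_t \leq k$, and the maximality in the definition of $L$ together with $\ell_t < \ell_{t+1}$ yields $k < \ell_{t+1}$ (with strict inequality being automatic when $\ell_{t+1} = \infty$). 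Applying the first claim then gives $\theta_k = \theta_{\ell_t} = \theta_{\ell_{L(k)}}$.

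For the second statement, I would simply inspect \cref{eqn-nonmonotone-threshold}: its right-hand side depends on $k$ only through $L(k)$, so $\tau_{\obj}^{k}$ is genuinely a function of $L(k)$ alone. Since $L$ is constant on each block $\{\ell_t,\ldots,\ell_{t+1}-1\}$ (as established in the argument for the first statement), the first assertion of the second statement is immediate, and specialising to the index $\ell_{L(k)}$ in that constancy yields $\tau_{\obj}^k = \tau_{\obj}^{\ell_{L(k)}}$ for arbitrary $k$.

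The main thing to watch is purely bookkeeping at the boundaries: one needs to check that $L(k)$ is well defined (which holds because $\ell_0 = 0 \leq k$, so the set in \cref{eqn-iterate-subsequence-map} is always nonempty and bounded above by $k$), and that the conventions $\ell_t = \infty$ interact harmlessly with the maximality arguments when $\ell_{t+1} = \infty$ (in which case every $k \geq \ell_t$ has $L(k) = t$ and $\theta_k = \theta_{\ell_t}$, else some finite successor $\ell_{t+1}$ would exist). I do not expect any substantive mathematical obstacle; the entire argument is a definitional unpacking.
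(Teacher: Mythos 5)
Your proof is correct. The paper states this lemma without any proof, treating it as an immediate consequence of the definitions, and your argument---minimality of $\ell_{t+1}$ forcing the chain of equalities, $L$ being constant on each block $\{\ell_t,\ldots,\ell_{t+1}-1\}$, and $\tau_{\obj}^k$ depending on $k$ only through $L(k)$---is exactly the intended justification, including the correct handling of the $\ell_{t+1}=\infty$ boundary case.
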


%%%%%%%%%%%%%%% toy example
\begin{figure}[H]
\centering
\small
\input{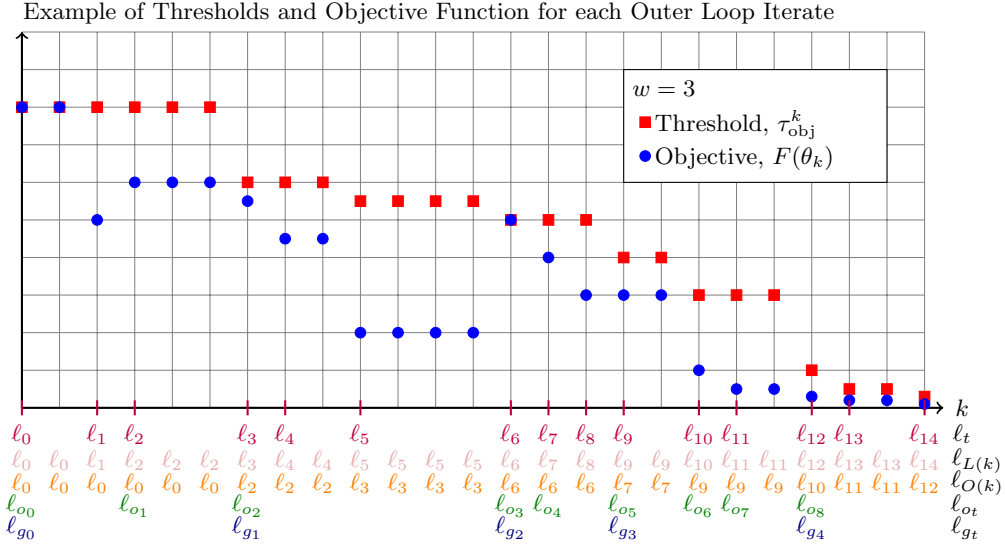}
\caption{A toy example to show the possible behaviors of the objective function, threshold, and different iteration counters. The horizontal axis is the outer loop iteration counter, starting at zero and running to twenty three with each vertical gray line indicating a new outer loop iteration. The vertical axis corresponds to the objective function and threshold values. The values of $\lbrace \ell_t \rbrace$, $\lbrace \ell_{L(k)} \rbrace$, $\lbrace \ell_{O(k)} \rbrace$, $\lbrace \ell_{o_t} \rbrace$, and $\lbrace \ell_{g_t} \rbrace$ are indicated below the horizontal axis.\label{figure-objective-threshold-diagram}}
\end{figure}

With this notation, we return to comparing our nonmonotone Armijo condition against the above two versions. First, unlike the two aforementioned conditions, when the trigger iterate $j$ exceeds $1$, our condition for acceptance, $F(\psi_{j}^k) < \tau_{\obj}^k + \rho \delta_k \alpha_0^k \dot F(\theta_k)^\intercal \gamma_0^k$, has a fixed right hand side that does not depend on $\psi_j^k - \theta_k$. Second, when $j>1$ and $w=1$, then our condition does not reduce to the standard Armijo condition as is the case for the condition in \cite{grippo1986nonmonotone,grippo1989truncated,grippo1991class}. When $j = 1$, then our condition reduces to that of \cite{grippo1986nonmonotone,grippo1989truncated,grippo1991class}. When $j = 1$ and $w=1$, then our condition reduces to the standard Armijo condition.

\paragraph{Test Parameter Selection} In \Cref{alg-general-algorithm}, the test parameter selection procedure should be designed to satisfy the following properties. We begin with requirements for $\lbrace \tau_{\iter,\exit}^k : k+1 \in \mathbb{N} \rbrace$.
\begin{property} \label{property-radius-nonnegative}
For all $k+1 \in \mathbb{N}$, $\tau_{\iter,\exit}^k \geq 0$.
\end{property}
\begin{property} \label{property-radius-bounded}
For any $\theta \in \mathbb{R}^n$, $\exists \overline \tau_{\exit}(\theta) \geq 0$ such that if $\theta = \theta_k$ then $\tau_{\iter,\exit}^k \leq \overline{\tau}_{\exit}(\theta)$.
\end{property}
We now consider requirements on $\tau_{\gra,\low}^k$ and $\tau_{\gra,\upp}^k$.
\begin{property} \label{property-grad-lower}
For any $\theta \in \mathbb{R}^n$, for all $k+1 \in \mathbb{N}$ such that if $\theta_k = \theta$, then $\tau_{\gra,\low}^k \in (0, \inlinenorm{ \dot F(\theta)}_2)$.
\end{property}
\begin{property} \label{property-grad-upper}
For any $\theta \in \mathbb{R}^n$, for all $k+1 \in \mathbb{N}$ such that if $\theta_k = \theta$, then $\tau_{\gra,\upp}^k > \inlinenorm{ \dot F(\theta)}_2$.
\end{property}
Finally, we consider requirements on $\tau_{\iter,\max}^k$.
\begin{property} \label{property-iter-max}
There exists an $\overline \tau_{\max}$ such that $1 \leq \tau_{\iter,\max}^k \leq \overline \tau_{\max}$ for all $k + 1 \in \mathbb{N}$.
\end{property}
%We see that the choices of these parameters in \cref{alg-naive-algorithm} satisfy the stated properties. 
The above properties prevent the inner loop from being triggered at $j = 0$ since $\inlinenorm{ \psi_0^k - \theta_k } = 0 \not > \tau_{\iter,\exit}^k \geq 0$ by \cref{property-radius-nonnegative}; $\inlinenorm{\dot F(\psi_0^k)}_2 = \inlinenorm{\dot F(\theta_k)}_2 \in (\tau_{\gra,\low}^k, \tau_{\gra,\upp}^k)$ by \cref{property-grad-lower,property-grad-upper}; and $\tau_{\iter,\max}^k \geq 1$ by \cref{property-iter-max}.

\subsection{Global Convergence Analysis} \label{subsec-global-convergence}

We now establish the asymptotic properties of our general methodology from a global convergence perspective (\textbf{by ignoring the outer loop stopping condition in \cref{alg-general-algorithm}}). We follow the following general steps in our analysis.
\begin{remunerate}
\item \textit{Accepting new iterates.} We show that the procedure must accept a new iterate unless it has already found a first-order stationary point. Mathematically, we prove $\ell_{t+1} < \infty$ if $\dot F(\theta_{\ell_t}) \neq 0$.
\item \textit{Objective function remains bounded.} We analyze the behavior of the thresholds, $\lbrace \tau_{\obj}^k : k+1 \in \mathbb{N} \rbrace$, to show that the objective function cannot diverge.
\item \textit{Analysis of a gradient subsequence.} We analyze the asymptotic behavior of the gradients at the iterates to conclude that either a stationary point is found in finite time, or a subsequence of the gradients tends to zero if certain growth conditions hold on the constants arising in \cref{as-loc-lip-cont,property-stepsize-lower,property-stepsize-upper,property-stepdirection-negative,property-stepdirection-propgrad,property-radius-bounded}.
\end{remunerate}

We underscore several points. First, under our rather general problem setting, we cannot produce a useful complexity analysis. Second, under our general properties, we also do not provide a local convergence analysis as this would be specialized to the design choices of the end-user. We will leave such specialization to the future and focus on a general theory here.

\paragraph{Accepting new iterates} For each $k+1 \in \mathbb{N}$, let $j_k \in \mathbb{N}$ denote the triggering iterate for the inner loop, which is bounded by $\overline{\tau}_{\max}$ if \cref{property-iter-max} holds and is nonzero if \cref{property-radius-nonnegative,property-grad-lower,property-grad-upper,property-iter-max} hold. To begin, we show that inner loop iterates remain in a compact set until an accepted iterate is found.

\begin{lemma} \label{result-iterates-in-compactsets}
Suppose \cref{problem} satisfies \cref{as-loc-lip-cont}, and is solved using \cref{alg-general-algorithm} with  
\cref{property-iter-max,property-stepsize-upper,property-stepdirection-propgrad,property-radius-bounded,property-radius-nonnegative,property-grad-upper}
initialized at any $\theta_0 \in \mathbb{R}^n$.
Then, for every $k+1 \in \mathbb{N}$, there exists a compact $C_k \subset \mathbb{R}^n$ such that $\theta_k \in C_k$, $\lbrace \psi_{1}^k,\ldots, \psi_{j_k}^k \rbrace \subset C_k$, and, if $\theta_{k} = \theta_{k+1}$, then $C_{k} \supset C_{k+1}$.
\end{lemma}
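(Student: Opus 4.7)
The plan is to prove the claim by induction on the outer iteration index $k$, constructing $C_k$ explicitly as a closed ball $\overline{B}(\theta_k, R_k)$ around the current outer iterate, with radius $R_k$ depending only on $\theta_k$ and the current step-size scaling $\delta_k$.

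First I would observe that for each $j \in \{0, 1, \ldots, j_k - 1\}$ the inner-loop trigger failed, so in particular $\inlinenorm{\psi_j^k - \theta_k}_2 \leq \tau_{\iter,\exit}^k$. By \cref{property-radius-bounded} this gives $\tau_{\iter,\exit}^k \leq \overline{\tau}_{\exit}(\theta_k)$, so all pre-trigger iterates lie in the compact ball $B_k := \overline{B}(\theta_k, \overline{\tau}_{\exit}(\theta_k))$. Next I would bound the single extra step from $\psi_{j_k-1}^k$ to $\psi_{j_k}^k = \psi_{j_k-1}^k + \delta_k \alpha_{j_k-1}^k \gamma_{j_k-1}^k$ by combining three ingredients: \cref{property-stepsize-upper} gives $\alpha_{j_k-1}^k \leq \overline{\alpha}(B_k)$; \cref{property-stepdirection-propgrad} gives $\inlinenorm{\gamma_{j_k-1}^k}_2 \leq \overline{g}(B_k) \inlinenorm{\dot F(\psi_{j_k-1}^k)}_2$; and \cref{as-loc-lip-cont}, through continuity of $\dot F$, yields $M_k := \max_{\psi \in B_k} \inlinenorm{\dot F(\psi)}_2 < \infty$. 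Together these give the triangle-inequality bound $\inlinenorm{\psi_{j_k}^k - \theta_k}_2 \leq \overline{\tau}_{\exit}(\theta_k) + \delta_k \overline{\alpha}(B_k) \overline{g}(B_k) M_k$.

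I would then take $R_k$ equal to the right-hand side and set $C_k := \overline{B}(\theta_k, R_k)$, which satisfies the first two required properties by construction. For the inclusion $C_{k+1} \subset C_k$ when $\theta_{k+1} = \theta_k$, I note that this equality occurs only in the rejection branch of the inner loop, where $\delta_{k+1} = \sigma_{\low} \delta_k < \delta_k$. Since $\overline{\tau}_{\exit}(\theta_k), \overline{\alpha}(B_k), \overline{g}(B_k), M_k$ depend only on $\theta_{k+1} = \theta_k$ and its surrounding ball (not on the freshly re-selected trigger parameters nor on $\delta_k$), the formula for $R_{k+1}$ has the identical structure with a strictly smaller $\delta$-factor, yielding $R_{k+1} < R_k$ and hence $C_{k+1} \subset C_k$.

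The main technical hurdle is an apparent circularity: bounding the displacement of the final inner iterate $\psi_{j_k}^k$ seems to require a compact set that already contains it. The resolution is to use the trigger rule asymmetrically: a priori compactness is needed only for the pre-trigger iterates $\psi_0^k, \ldots, \psi_{j_k-1}^k$, which is delivered directly by the $\tau_{\iter,\exit}^k$ cap together with \cref{property-radius-bounded}, and only one additional step is added on top. A secondary subtlety is that the constants $\overline{\alpha}(B_k), \overline{g}(B_k), M_k$ must be expressed as functions of $\theta_k$ alone (through $B_k$), so that they are unchanged when $\theta_{k+1} = \theta_k$ and the inclusion property becomes immediate.
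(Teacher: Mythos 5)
Your proposal is correct and follows essentially the same route as the paper's proof: the same ball $\mathcal{B}(\theta_k)$ of radius $\overline{\tau}_{\exit}(\theta_k)$ containing the pre-trigger iterates, the same supremum of $\inlinenorm{\dot F}_2$ over that ball, the same triangle-inequality bound on the single post-trigger step, and the same observation that $\delta_{k+1} = \sigma_{\low}\delta_k$ shrinks the radius when the iterate is rejected. The only cosmetic difference is that you frame it as an induction, whereas the construction is genuinely per-$k$ and needs no inductive hypothesis.
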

\begin{proof}
For any $\theta \in \mathbb{R}^n$, let $\overline \tau_{\exit}(\theta)$ be as in \cref{property-radius-bounded}. 
For any $\theta \in \mathbb{R}^n$, define $\mathcal{B}(\theta) = \lbrace \psi: \inlinenorm{ \psi - \theta}_2 \leq \overline \tau_{\exit}(\theta) \rbrace$; and define $\mathcal G(\theta) = \sup_{\psi \in \mathcal{B}(\theta)} \inlinenorm{ \dot F(\psi)}_2$, which is finite because of the continuity of the gradient function under \cref{as-loc-lip-cont}. 
Using \cref{property-stepsize-upper,property-stepdirection-propgrad,property-radius-bounded}, define
\begin{equation} \label{eqn-compactset-outerloop}
C_k = \lbrace \psi : \inlinenorm{ \psi - \theta_k }_2 \leq \overline \tau_{\exit}(\theta_k) + \delta_k \overline\alpha (\mathcal{B}(\theta_k)) \overline g( \mathcal{B}(\theta_k)) \mathcal G(\theta_k) \rbrace, ~\forall k+1 \in \mathbb{N}.
\end{equation}
We first show $C_k$ satisfies the desired properties. First, $\theta_k \in C_k$ as the radius of the ball defining $C_k$ is non-negative. Second, by definition, the inner loop iterate $\psi_{i}^k$ satisfies $\inlinenorm{ \psi_{i}^k - \theta_k}_2 \leq \tau_{\iter,\exit}^k \leq \overline \tau_{\exit}(\theta_k)$ for $i=1,\ldots,j_k-1$. In other words, $\psi_{i}^k \in \mathcal{B}(\theta_k)$ for $i=1,\ldots,j_k-1$. 

As a result, by \cref{property-stepsize-upper}, $\alpha_{j_k-1}^k \leq \overline \alpha (\mathcal{B}(\theta_k))$; and, by \cref{property-grad-upper,as-loc-lip-cont}, $\inlinenorm{ \gamma_{j_k-1}^k}_2 \leq \overline g(\mathcal{B}(\theta_k)) \inlinenorm{ \dot F(\psi_{j_k-1}^k)}_2 \leq \overline g(\mathcal{B}(\theta_k))\mathcal G(\theta_k)$.
Putting these pieces together,
\begin{align}
\inlinenorm{ \psi_{j_k}^k - \theta_k}_2 
&\leq \inlinenorm{ \psi_{j_k}^k - \psi_{j_k-1}^k}_2 + \inlinenorm{\psi_{j_k -1}^k - \theta_k}_2 \\
&\leq \inlinenorm{ \delta_k \alpha_{j_k-1}^k  \gamma_{j_k-1}^k }_2 + \overline \tau_{\exit}(\theta_k) \\
&\leq \delta_k \overline \alpha (\mathcal{B}(\theta_k)) \overline g (\mathcal{B}(\theta_k)) \mathcal G(\theta_k) + \overline \tau_{\exit}(\theta_k).
\end{align}
To summarize, $\psi_{j_k}^k \in C_k$ and $\psi_{i}^k \in \mathcal{B}(\theta_k) \subset C_k$ for $i=1,\ldots,j_k-1$. 

Finally, when $\theta_{k} = \theta_{k+1}$, $\delta_{k+1} = \sigma_{\low} \delta_k < \delta_k$ since $\sigma_{\low} \in (0,1)$ as required by \cref{alg-general-algorithm}. Plugging this information in \cref{eqn-compactset-outerloop}, it follows that $C_{k} \supset C_{k+1}$.
\end{proof}

With the existence of $\lbrace C_k : k+1 \in \mathbb{N} \rbrace$ established, we now show, there is a sufficiently small choice of $\delta_k$ such that the triggering iterate of the inner loop, $\psi_{j_k}^k$, will satisfy our nonmonontone Armijo condition \textit{despite} the condition not depending on the difference between the terminal iterate and initial iterate.

\begin{lemma} \label{result-sufficient-scaling}
Suppose \cref{problem} satisfies \cref{as-loc-lip-cont}, and is solved using \cref{alg-general-algorithm} with  
\cref{property-iter-max,property-stepsize-lower,property-stepsize-upper,property-stepdirection-negative,property-stepdirection-propgrad,property-radius-bounded,property-grad-lower,property-radius-nonnegative,property-grad-upper}
initialized at any $\theta_0 \in \mathbb{R}^n$.
Let $C \subset \mathbb{R}^n$ be a compact set such that $C_k \subset C$ for some $k+1 \in \mathbb{N}$. Let $\mathcal L(C)$ denote the Lipschitz rank of the gradient function over $C$. If $\dot F(\theta_k) \neq 0$ and
\begin{equation}
\delta_k < \frac{2 (1 - \rho) \underline g(C) }{\overline g(C)^2 \mathcal L(C) \overline \alpha(C)},
\end{equation}
then $F(\psi_{j_k}^k) < F(\theta_k) + \rho \delta_k \alpha_{0}^k \dot F(\theta_k)^\intercal \gamma_0^k \leq \tau_{\obj}^k + \rho \delta_k \alpha_{0}^k \dot F(\theta_k)^\intercal \gamma_0^k$. 
\end{lemma}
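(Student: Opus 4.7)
The strategy is to apply the standard gradient descent lemma to each inner loop step and then telescope. Since $C_k \subset C$ by hypothesis, \cref{result-iterates-in-compactsets} ensures that every inner iterate $\psi_0^k, \psi_1^k, \ldots, \psi_{j_k}^k$ lies in $C$, so $\mathcal L(C)$ controls the gradient Lipschitz behavior along the entire inner trajectory. For each index $j = 0, 1, \ldots, j_k - 1$, the descent lemma then yields
\begin{equation*}
F(\psi_{j+1}^k) \le F(\psi_j^k) + \delta_k \alpha_j^k \dot F(\psi_j^k)^\intercal \gamma_j^k + \frac{\mathcal L(C)}{2} \delta_k^2 (\alpha_j^k)^2 \|\gamma_j^k\|_2^2.
\end{equation*}

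Next, I would massage the quadratic term using \cref{property-stepdirection-propgrad,property-stepdirection-negative}: \cref{property-stepdirection-propgrad} bounds $\|\gamma_j^k\|_2^2 \le \overline g(C)^2 \|\dot F(\psi_j^k)\|_2^2$, and \cref{property-stepdirection-negative} allows the substitution $\|\dot F(\psi_j^k)\|_2^2 \le -\underline g(C)^{-1} \dot F(\psi_j^k)^\intercal \gamma_j^k$. Factoring yields
\begin{equation*}
F(\psi_{j+1}^k) \le F(\psi_j^k) + \delta_k \alpha_j^k \dot F(\psi_j^k)^\intercal \gamma_j^k \left[ 1 - \frac{\mathcal L(C) \delta_k \alpha_j^k \overline g(C)^2}{2 \underline g(C)} \right].
\end{equation*}
Combining \cref{property-stepsize-upper} ($\alpha_j^k \le \overline \alpha(C)$) with the hypothesis on $\delta_k$ shows that the bracketed factor is strictly larger than $\rho$. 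Moreover, for $j < j_k$ the inner loop has not yet triggered, so by \cref{property-grad-lower} we have $\|\dot F(\psi_j^k)\|_2 > \tau_{\gra,\low}^k > 0$; \cref{property-stepdirection-negative} then forces $\dot F(\psi_j^k)^\intercal \gamma_j^k < 0$ strictly. Multiplying a strictly negative number by a factor exceeding $\rho$ reverses the comparison to give
\begin{equation*}
F(\psi_{j+1}^k) < F(\psi_j^k) + \rho \delta_k \alpha_j^k \dot F(\psi_j^k)^\intercal \gamma_j^k.
\end{equation*}

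Finally, I would telescope this inequality from $j=0$ through $j = j_k - 1$ to obtain $F(\psi_{j_k}^k) < F(\theta_k) + \rho \delta_k \sum_{j=0}^{j_k - 1} \alpha_j^k \dot F(\psi_j^k)^\intercal \gamma_j^k$. Every term in this sum is strictly negative by the preceding observation, so retaining only the $j=0$ contribution weakens the inequality and produces the first claim $F(\psi_{j_k}^k) < F(\theta_k) + \rho \delta_k \alpha_0^k \dot F(\theta_k)^\intercal \gamma_0^k$. The second inequality is immediate from $\tau_{\obj}^k \ge F(\theta_k)$, which follows from \cref{result-simple-properties-accepted-iterates} (so that $\theta_k = \theta_{\ell_{L(k)}}$) together with the definition \cref{eqn-nonmonotone-threshold} of $\tau_{\obj}^k$ as a maximum containing $F(\theta_{\ell_{L(k)}})$. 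The main subtlety is securing the strict inequality throughout: this hinges on \cref{property-grad-lower} ruling out a zero directional derivative along the inner trajectory before the loop triggers, since a non-strict version of the bracketed-factor argument would only yield a non-strict bound and would not separate $F(\psi_{j_k}^k)$ from the nonmonotone threshold.
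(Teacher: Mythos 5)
Your proposal is correct and follows essentially the same route as the paper's proof: both apply the descent lemma on $C$, use \cref{property-stepdirection-negative,property-stepdirection-propgrad,property-stepsize-upper} together with the hypothesis on $\delta_k$ to show the effective decrease factor beats $\rho$, exploit \cref{property-grad-lower} for strictness, and finish with $F(\theta_k) = F(\theta_{\ell_{L(k)}}) \leq \tau_{\obj}^k$. The only (immaterial) difference is that you telescope the $\rho$-relaxed inequality over all inner steps and discard the negative $j \geq 1$ terms, whereas the paper applies the $\rho$-relaxation only at $j=0$ and uses plain strict decrease $F(\psi_{j_k}^k) \leq F(\psi_1^k)$ for the remaining steps.
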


\begin{proof}
We recall several facts. By \cref{result-iterates-in-compactsets}, $\lbrace \psi_{j}^k : j=0,\ldots, j_k \rbrace \subset C_k \subset C$. Hence, by \cref{property-stepsize-lower,property-stepsize-upper,property-radius-nonnegative}, $0 < \underline\alpha(C) \leq \alpha_j^k \leq \overline\alpha(C) < \infty$ for $j = 0,\ldots j_k-1$. Moreover, by \cref{property-stepdirection-negative,property-stepdirection-propgrad}, $-\underline g(C)\inlinenorm{ \dot F(\psi_j^k)}_2^2 \geq \dot F(\psi_j^k)^\intercal \gamma_j^k$ and $\overline g(C) \inlinenorm{ \dot F(\psi_j^k)}_2 \geq \inlinenorm{ \gamma_j^k}_2$ for $j=0,\ldots,j_k-1$. Note, $j_k \neq 0$ under the given properties as described at the end of \cref{subsec-method}.

Now, we use these facts, to convert the hypothesis on $\delta_k$ into a more useful form. Specifically, by \cref{property-stepsize-upper}, $\delta_k \alpha_j^k \leq \delta_k \overline\alpha(C) < [2(1-\rho) \underline g(C)]/[ \overline g^2 (C) \mathcal L(C)]$ for $j=0,\ldots,j_k -1$. 
By \cref{alg-general-algorithm} and \cref{property-stepsize-lower}, $\delta_k \alpha_j^k > 0$, which implies $(\delta_k \alpha_j^k)^2 \overline g(C)^2 \mathcal L(C)/2 < (\delta_k \alpha_j^k)(1-\rho) \underline g(C)$ (note, we can take $\mathcal L(C) > 0$). 

By hypothesis and \cref{property-grad-lower}, 
$\inlinenorm{\dot F(\psi_j^k)}_2^2 > 0$ for $j=0,\ldots,j_k-1$. 
So, $(\delta_k \alpha_j^k)^2 \overline g(C)^2 \inlinenorm{\dot F(\psi_j^k)}_2^2 \mathcal L(C)/2 < (\delta_k \alpha_j^k) (1-\rho) \underline g(C) \inlinenorm{\dot F(\psi_j^k)}_2^2$. Using \cref{property-stepdirection-negative,property-stepdirection-propgrad},
$\inlinenorm{ \delta_k \alpha_j^k \gamma_j^k}_2^2 \mathcal L(C)/2  < -(\delta_k \alpha_j^k)(1-\rho) \dot F(\psi_j^k)^\intercal \gamma_j^k$. In other words, for $j=0,\ldots,j_k-1$,
\begin{equation} \label{eqn-objective-reduction}
(\delta_k \alpha_j^k) (1-\rho) \dot F(\psi_j^k)^\intercal \gamma_j^k + \frac{\mathcal L(C)}{2} \inlinenorm{ \delta_k \alpha_j^k \gamma_j^k}_2^2 < 0.
\end{equation}
In fact, since $(1-\rho) \in (0,1)$ and $\dot F(\psi_j^k)^\intercal \gamma_j^k < 0$ by \cref{property-stepdirection-negative}, the $(1-\rho)$ can be replaced with $1$ and the inequality will hold.

We use this relationship \cref{eqn-objective-reduction} in two ways. First, by Taylor's theorem, \cref{as-loc-lip-cont} and \cref{eqn-objective-reduction}, for $j=0,\ldots,j_k-1$,
\begin{equation}
F(\psi_{j+1}^k) \leq F(\psi_j^k) + \alpha_j^k \delta_k \dot F(\psi_j^k)^\intercal \gamma_j^k + \frac{\mathcal L(C)}{2} \inlinenorm{ \delta_k \alpha_j^k \gamma_j^k }_2^2 < F(\psi_j^k).
\end{equation}

In particular, at $j=0$, by \cref{eqn-objective-reduction},
\begin{align}
F(\psi_1^k) 
&\leq F(\psi_0^k) + (1-\rho)\alpha_0^k \delta_k \dot F(\psi_0^k)^\intercal \gamma_0^k + \frac{\mathcal L(C)}{2} \inlinenorm{ \delta_k \alpha_0^k \gamma_0^k }_2^2  + \rho \alpha_0^k \delta_k \dot F(\psi_0^k)^\intercal \gamma_0^k \\
&< F(\psi_0^k) + \rho \alpha_0^k \delta_k \dot F(\psi_0^k)^\intercal \gamma_0^k.
\end{align}
Putting this together with $\psi_0^k = \theta_k$, $F(\psi_{j_k}^k) \leq F(\psi_1^k) < F(\theta_k) + \rho \alpha_0^k \delta_k \dot F(\theta_k)^\intercal \gamma_0^k$. 

Finally, $\theta_k = \theta_{\ell_{L(k)}}$ and so $F(\theta_k) = F(\theta_{\ell_{L(k)}}) \leq \tau_{\obj}^k$ by \cref{eqn-nonmonotone-threshold}.
\end{proof}

We now combine these two facts to show that the procedure must always accept a new iterate as long as a stationary point has yet to be found.

\begin{theorem} \label{result-accept-new-iterates}
Suppose \cref{problem} satisfies \cref{as-loc-lip-cont}, and is solved using \cref{alg-general-algorithm} with  
\cref{property-iter-max,property-stepsize-lower,property-stepsize-upper,property-stepdirection-negative,property-stepdirection-propgrad,property-radius-bounded,property-grad-lower,property-radius-nonnegative,property-grad-upper}
initialized at any $\theta_0 \in \mathbb{R}^n$. 
Let $\lbrace \ell_t : t+1 \in \mathbb{N} \rbrace$ be defined as in \cref{eqn-iterate-subsequence}. Then, for any $t+1\in \mathbb{N}$, if $\ell_t < \infty$ and $\dot F(\theta_{\ell_t}) \neq 0$, then $\ell_{t+1} < \infty$.
\end{theorem}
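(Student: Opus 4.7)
The plan is a contradiction argument. Assume $\ell_{t+1} = \infty$, so by \cref{result-simple-properties-accepted-iterates} we have $\theta_k = \theta_{\ell_t}$ for all $k \geq \ell_t$, and every outer iteration $k \geq \ell_t$ exits its inner loop through one of the three branches of \cref{alg-general-algorithm} without producing a new distinct iterate. The strategy is to combine \cref{result-sufficient-scaling} (which forces strict descent and hence a distinct iterate whenever $\delta_k$ is small enough relative to a compact set) with \cref{result-iterates-in-compactsets} (which controls that compact set provided $\delta_k$ stays bounded) to trap $\delta_k$ in a finite interval $[\delta^{**},\delta^*]$ that the discrete dynamics $\delta_{k+1}\in\{\sigma_{\low}\delta_k,\sigma_{\upp}\delta_k\}$ cannot sustain forever.

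First I would rule out the middle (accept-with-gradient-lower-bound) branch: firing it would require $\|\dot F(\psi_{j_k}^k)\|_2\leq\tau_{\gra,\low}^k$, but \cref{property-grad-lower} gives $\tau_{\gra,\low}^k<\|\dot F(\theta_{\ell_t})\|_2$, so $\psi_{j_k}^k\neq\theta_{\ell_t}$ and thus $\theta_{k+1}\neq\theta_{\ell_t}$, contradiction. Hence each $k\geq\ell_t$ exits via either the Armijo-failure branch (with $\delta_{k+1}=\sigma_{\low}\delta_k$) or the else branch; in the latter case $\psi_{j_k}^k=\theta_{\ell_t}$ must hold, for otherwise $\theta_{k+1}\neq\theta_{\ell_t}$, contradiction again. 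When the else branch fires, the nonmonotone Armijo inequality $F(\theta_{\ell_t})<\tau_{\obj}^{\ell_t}+\rho\delta_k\alpha_0^k\dot F(\theta_{\ell_t})^\intercal\gamma_0^k$ combined with \cref{property-stepsize-lower,property-stepdirection-negative} on the ball $\mathcal{B}(\theta_{\ell_t})$ used in the proof of \cref{result-iterates-in-compactsets} forces $\delta_k<\delta^{\max}$ for an explicit constant depending on $\tau_{\obj}^{\ell_t}-F(\theta_{\ell_t})$, $\underline\alpha$, $\underline g$, and $\|\dot F(\theta_{\ell_t})\|_2$. Consequently $\delta_k\leq\max(\delta_{\ell_t},\sigma_{\upp}\delta^{\max})=:\delta^*$ uniformly.

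With $\delta_k\leq\delta^*$ uniformly, \cref{result-iterates-in-compactsets} produces a single fixed compact set $C^*\subset\mathbb{R}^n$ containing every inner-loop iterate from $\ell_t$ onward. Applying \cref{result-sufficient-scaling} with $C=C^*$ then yields a threshold $\delta^{**}>0$ such that $\delta_k<\delta^{**}$ would force $F(\psi_{j_k}^k)<F(\theta_{\ell_t})$, hence $\psi_{j_k}^k\neq\theta_{\ell_t}$ and $\theta_{k+1}\neq\theta_{\ell_t}$, contradiction. Therefore $\delta_k\in[\delta^{**},\delta^*]$ for all $k\geq\ell_t$.

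The main obstacle is closing the argument from this confinement. My plan is to show the Armijo-failure branch must fire infinitely often: consecutive else-branch firings multiply $\delta$ by $\sigma_{\upp}\geq 1$, and the requirement $\delta_k<\delta^{\max}$ caps the number of such firings between Armijo failures, so infinitely many iterations multiply $\delta$ by $\sigma_{\low}\in(0,1)$; a simple accounting of the two multipliers shows $\delta_k$ must eventually fall below $\delta^{**}$, furnishing the sought contradiction. The degenerate case $\sigma_{\upp}=1$ is simpler still, since then the else branch cannot undo any Armijo-failure reduction, so infinitely many Armijo failures would already drive $\delta_k\to 0$.
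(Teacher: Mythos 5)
Your proposal follows the same skeleton as the paper's proof: argue by contradiction that $\theta_k = \theta_{\ell_t}$ for all $k \geq \ell_t$, confine the inner-loop iterates to a compact set via \cref{result-iterates-in-compactsets}, and invoke \cref{result-sufficient-scaling} once $\delta_k$ drops below the corresponding threshold. Where you diverge is in taking seriously the possibility that an ``accept'' branch fires with $\psi_{j_k}^k = \theta_{\ell_t}$ exactly, so that $\theta_{k+1} = \theta_k$ and yet $\delta_{k+1} = \sigma_{\upp}\delta_k$. The paper silently excludes this: it treats $\theta_{k+1} = \theta_k$ as synonymous with the Armijo-failure branch firing (see also the dichotomy asserted in the proof of \cref{result-threshold-decrease}), so that $\delta_k = \sigma_{\low}^{k-\ell_t}\delta_{\ell_t} \to 0$ and \cref{result-sufficient-scaling} applied on $C_{\ell_t}$ finishes the argument in a few lines. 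Your elimination of the middle branch, your derivation of $\delta^{\max}$ from $\tau_{\obj}^{\ell_t} - F(\theta_{\ell_t})$, and your construction of a single compact set $C^*$ from the uniform bound $\delta_k \leq \delta^*$ are all sound.

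The gap is in the closing ``accounting.'' From what you establish --- $\delta_k \in [\delta^{**},\delta^*]$ for all $k$, each else-firing requires the pre-update value to lie below $\delta^{\max}$, and consecutive else-firings are capped when $\sigma_{\upp} > 1$ --- it does \emph{not} follow that $\delta_k$ must eventually fall below $\delta^{**}$. Take $\sigma_{\low}\sigma_{\upp} = 1$ and some $v \in [\delta^{**}, \delta^{\max})$: the sequence can alternate forever between an else-firing at $v$ (raising $\delta$ to $\sigma_{\upp} v$) and an Armijo failure at $\sigma_{\upp}v$ (returning $\delta$ to $v$), satisfying every constraint you derived. Infinitely many $\sigma_{\low}$-multiplications do not force $\delta_k \to 0$ when they can be undone by interleaved $\sigma_{\upp}$-multiplications, and nothing you have proved keeps the cumulative product $\sigma_{\low}^{a_k}\sigma_{\upp}^{b_k}$ out of the window $[\delta^{**}/\delta_{\ell_t}, \delta^*/\delta_{\ell_t}]$. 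The $\sigma_{\upp}=1$ sub-case is also not ``simpler still'': with $\sigma_{\upp}=1$ nothing caps a run of consecutive else-firings, so the Armijo-failure branch need never fire and $\delta_k$ can remain constant. To close the proof you should either adopt the paper's reading --- an accepted triggering iterate is by construction a \emph{distinct} iterate, so non-acceptance forces the $\sigma_{\low}$ branch at every iteration --- or supply a separate argument ruling out a perpetual exact return of $\psi_{j_k}^k$ to $\theta_{\ell_t}$; the dynamics of $\delta_k$ alone will not do it.
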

\begin{proof}
The proof is by induction. As the proof of the base case (i.e., $\ell_1 < \infty$) uses the same argument as the conclusion (i.e., if $\ell_t < \infty$ then $\ell_{t+1} < \infty$), we show the conclusion. 
To this end, suppose $\lbrace \ell_0,\ldots,\ell_{t} \rbrace$ are finite and $\dot F(\theta_{\ell_t}) \neq 0$. For a contradiction, suppose $\theta_k = \theta_{\ell_t}$ for all $k \geq \ell_t$. Then, by \cref{result-iterates-in-compactsets}, the compact sets $C_{k} \subset C_{\ell_t}$ for all $k \geq \ell_t$. By \cref{alg-general-algorithm} and $\sigma_{\low} \in (0,1)$, then $\delta_k = \sigma_{\low}^{k-\ell_t} \delta_{\ell_t}$. There exists a $k \geq \ell_t$ such that
\begin{equation}
\delta_k = \sigma_{\low}^{k - \ell_t} \delta_{\ell_t} < \frac{2 (1 - \rho) \underline g (C_{\ell_t}) }{ \overline g(C_{\ell_t})^2 \mathcal L(C_{\ell_t}) \overline \alpha(C_{\ell_t})},
\end{equation}
which, by \cref{result-sufficient-scaling} and using $\dot F(\theta_k) = \dot F(\theta_{\ell_t}) \neq 0$, implies $\psi_{j_k}^k$ satisfies our nonmonotone Armijo condition. Hence, $\theta_{k+1} = \psi_{j_k}^k \neq \theta_{\ell_t}$. Therefore, $\ell_{t+1} = k+1 < \infty$.
\end{proof}

\paragraph{Objective function remains bounded} We begin by establishing properties of $\tau_{\obj}^k$. Recall, from \cref{eqn-nonmonotone-threshold}, the value of $\tau_{\obj}^k$ is determined by the maximum $F(\theta_{\ell_i})$ over $i\in\lbrace\max\lbrace L(k) - w + 1, 0 \rbrace,\ldots,L(k)\rbrace$. To keep track of which accepted iterate corresponds to $\tau_{\obj}^k$, a useful quantity to define is $O:\mathbb{N} \cup \lbrace 0 \rbrace \to \mathbb{N} \cup \lbrace 0 \rbrace$ such that
\begin{equation} \label{eqn-maxobj-index-func}
O(k) = \max \lbrace s \leq L(k) : F(\theta_{\ell_s}) = \tau_{\obj}^k \rbrace, ~k+1 \in \mathbb{N}.
\end{equation}
We collect simple facts about $O$ in the following lemma, and show an example of its behavior in \cref{figure-objective-threshold-diagram}.

\begin{lemma} \label{result-properties-maxobj-index-func}
Let $O:\mathbb{N} \cup \lbrace 0 \rbrace \to \mathbb{N} \cup \lbrace 0 \rbrace$ be defined as in \cref{eqn-maxobj-index-func}. If, for $k+1 \in \mathbb{N}$, $\dot F(\theta_k) \neq 0$, then the following properties hold.
\begin{remunerate}
\item $O(k) \in \lbrace \max\lbrace L(k) - w + 1,0 \rbrace,\ldots,L(k) \rbrace$;
\item $\tau_{\obj}^k = F(\theta_{\ell_{O(k)}})$;
\item If $O(k) \neq L(k)$, then $F(\theta_{\ell_i}) < \tau_{\obj}^k$ for $i \in \lbrace O(k)+1,\ldots,L(k) \rbrace$; and
\item $F(\theta_{\ell_i}) \leq \tau_{\obj}^k$ for $i \in \lbrace \max\lbrace L(k) - w + 1, 0 \rbrace, \ldots, O(k) \rbrace$. 
\end{remunerate}
\end{lemma}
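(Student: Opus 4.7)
The plan is to verify each of the four claims directly from the definitions of $\tau_{\obj}^k$ in \cref{eqn-nonmonotone-threshold} and $O(k)$ in \cref{eqn-maxobj-index-func}; none of the claims appears to require any substantive analytic input beyond elementary properties of $\max$ over finite sets.

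First, I would establish claim~(1). By \cref{eqn-nonmonotone-threshold}, $\tau_{\obj}^k$ is the maximum of finitely many real numbers $F(\theta_{\ell_s})$ as $s$ ranges over the window $\lbrace \max\lbrace L(k)-w+1,0 \rbrace,\ldots, L(k) \rbrace$, so the maximum is attained at some index $s^\star$ in this window. In particular, the set $\lbrace s \leq L(k) : F(\theta_{\ell_s}) = \tau_{\obj}^k \rbrace$ from \cref{eqn-maxobj-index-func} is nonempty (it contains $s^\star$) and bounded above by $L(k)$, so $O(k)$ is a well-defined nonnegative integer satisfying $s^\star \leq O(k) \leq L(k)$. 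Since $s^\star \geq \max\lbrace L(k)-w+1,0 \rbrace$, this gives (1). Claim~(2) is then immediate: $O(k)$ itself lies in the defining set of \cref{eqn-maxobj-index-func}, so $F(\theta_{\ell_{O(k)}}) = \tau_{\obj}^k$.

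Next, I would prove (3) by exploiting the ``$\max$'' in the definition of $O(k)$. If $O(k) < L(k)$ and $i \in \lbrace O(k)+1,\ldots,L(k) \rbrace$, then $i \leq L(k)$ but $i > O(k)$, so maximality in \cref{eqn-maxobj-index-func} forces $F(\theta_{\ell_i}) \neq \tau_{\obj}^k$. Since $i \geq O(k)+1 > \max\lbrace L(k)-w+1,0 \rbrace$ by (1), the index $i$ also lies in the window defining $\tau_{\obj}^k$, so \cref{eqn-nonmonotone-threshold} gives $F(\theta_{\ell_i}) \leq \tau_{\obj}^k$. Combining these yields the strict inequality asserted in~(3). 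Claim~(4) is analogous and easier: for $i \in \lbrace \max\lbrace L(k)-w+1,0 \rbrace,\ldots,O(k) \rbrace$, part~(1) gives $i \leq L(k)$, so $i$ is in the window of \cref{eqn-nonmonotone-threshold}, and hence $F(\theta_{\ell_i}) \leq \tau_{\obj}^k$.

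The hardest ``part'' is really just confirming that the set in \cref{eqn-maxobj-index-func} is nonempty and that its maximum automatically lies in the advertised window; no compactness, Lipschitz, or algorithmic properties are invoked. The hypothesis $\dot F(\theta_k) \neq 0$ appears unused in the argument proper and is presumably included to ensure that the iteration has not already terminated, so that all of the quantities $\ell_{L(k)}, \theta_{\ell_{L(k)}},\ldots$ are finite and well-defined, as guaranteed by \cref{result-accept-new-iterates}.
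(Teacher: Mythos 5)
Your proof is correct: the paper states this lemma without proof, presenting it as a collection of ``simple facts'' about $O$, and your direct verification from \cref{eqn-nonmonotone-threshold,eqn-maxobj-index-func} is exactly the intended elementary argument. Your closing observation is also accurate---the hypothesis $\dot F(\theta_k)\neq 0$ plays no role here, since all indices $\ell_s$ with $s\leq L(k)$ satisfy $\ell_s\leq k<\infty$ by the definition of $L$ and are therefore automatically well-defined.
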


\Cref{eqn-nonmonotone-threshold,result-properties-maxobj-index-func} indicate that $\tau_{\obj}^{k} > \tau_{\obj}^{k+1}$ only when $k+1$ is an accepted iterate and when $F(\theta_{\ell_{O(k)}})$ is no longer in the set defining $\tau_{\obj}^{k+1}$. In other words, $\tau_{\obj}^{k} > \tau_{\obj}^{k+1}$ when $k+1 = \ell_{L(k+1)}$ and $O(k) \not\in \lbrace \max\lbrace L(k+1) - w + 1, 0 \rbrace,\ldots,L(k+1) \rbrace$. We verify this rigorously now.

\begin{lemma} \label{result-threshold-decrease}
Suppose \cref{problem} satisfies \cref{as-loc-lip-cont}, and is solved using \cref{alg-general-algorithm} with  
\cref{property-iter-max,property-stepsize-lower,property-stepsize-upper,property-stepdirection-negative,property-stepdirection-propgrad,property-radius-bounded,property-grad-lower,property-radius-nonnegative,property-grad-upper}
initialized at any $\theta_0 \in \mathbb{R}^n$.
Let $\lbrace \ell_t : t+1 \in \mathbb{N} \rbrace$ be defined as in \cref{eqn-iterate-subsequence}, and let $O:\mathbb{N} \cup \lbrace 0 \rbrace \to \mathbb{N} \cup \lbrace 0 \rbrace$ be defined as in \cref{eqn-maxobj-index-func}. For any $k+1 \in \mathbb{N}$, if $\dot F(\theta_k) \neq 0$, then only one of the following holds.
\begin{remunerate}
\item $k+1 = \ell_{L(k+1)}$ (i.e., $\theta_{k+1} \neq \theta_k$), $O(k) = L(k) - w + 1$, and $\tau_{\obj}^k > \tau_{\obj}^{k+1}$;
\item $k+1 = \ell_{L(k+1)}$ (i.e., $\theta_{k+1} \neq \theta_k$), $O(k) \neq L(k) - w + 1$, $\tau_{\obj}^k = \tau_{\obj}^{k+1}$, and $O(k+1) = O(k)$; or
\item $\tau_{\obj}^k = \tau_{\obj}^{k+1}$.
\end{remunerate}
\end{lemma}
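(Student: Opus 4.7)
The plan is to split first on whether the iterate at step $k+1$ is accepted (i.e., $\theta_{k+1} \neq \theta_k$), and then, inside the accepted case, to split on whether the maximizing index $O(k)$ coincides with the oldest window index $L(k) - w + 1$.

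First I would dispose of the non-accepted case. If $\theta_{k+1} = \theta_k$, then by \cref{result-simple-properties-accepted-iterates} we have $L(k+1) = L(k)$, so the window defining $\tau_{\obj}^{k+1}$ in \cref{eqn-nonmonotone-threshold} is identical to the window defining $\tau_{\obj}^{k}$. Hence $\tau_{\obj}^{k+1} = \tau_{\obj}^{k}$, which is case 3.

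For the remaining case, $k+1 = \ell_{L(k+1)}$ and $L(k+1) = L(k) + 1$. The key ingredient is the acceptance branch of \cref{alg-general-algorithm}: since the nonmonotone Armijo test succeeded,
\begin{equation*}
F(\theta_{\ell_{L(k)+1}}) = F(\theta_{k+1}) = F(\psi_{j_k}^k) < \tau_{\obj}^k + \rho \delta_k \alpha_0^k \dot F(\theta_k)^\intercal \gamma_0^k.
\end{equation*}
Because $\dot F(\theta_k) \neq 0$, \cref{property-stepdirection-negative} gives $\dot F(\theta_k)^\intercal \gamma_0^k < 0$; combined with $\rho, \delta_k > 0$ and $\alpha_0^k > 0$ from \cref{property-stepsize-lower}, this yields $F(\theta_{\ell_{L(k)+1}}) < \tau_{\obj}^k$. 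Now split on whether $O(k) = L(k) - w + 1$. If this equality holds (so in particular $L(k) \geq w-1$ and the $\max\{\cdot,0\}$ in \cref{eqn-nonmonotone-threshold} is inactive), then by part 3 of \cref{result-properties-maxobj-index-func} every $F(\theta_{\ell_i})$ with $i \in \{O(k)+1,\ldots,L(k)\}$ is strictly below $\tau_{\obj}^k$. Shifting the window to $\{L(k) - w + 2, \ldots, L(k)+1\}$ drops the unique maximizer $O(k)$; each surviving old index is strictly below $\tau_{\obj}^k$, and so is the new entry by the acceptance bound above. Taking the maximum gives $\tau_{\obj}^{k+1} < \tau_{\obj}^{k}$, which is case 1.

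Otherwise $O(k) \neq L(k) - w + 1$. Since $O(k) \geq \max\{L(k)-w+1,0\}$ by definition, this forces $O(k) \geq \max\{L(k) - w + 2, 0\}$, so $O(k)$ lies in the new window $\{\max\{L(k)-w+2, 0\}, \ldots, L(k)+1\}$ and $F(\theta_{\ell_{O(k)}}) = \tau_{\obj}^k$ guarantees $\tau_{\obj}^{k+1} \geq \tau_{\obj}^k$. Conversely, every other element of the new window is either an element of the old window (bounded by $\tau_{\obj}^k$ via part 4 of \cref{result-properties-maxobj-index-func}) or the new index $L(k)+1$ (bounded by $\tau_{\obj}^k$ via the acceptance bound), so $\tau_{\obj}^{k+1} \leq \tau_{\obj}^k$, giving $\tau_{\obj}^{k+1} = \tau_{\obj}^k$. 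Moreover, by part 3 of \cref{result-properties-maxobj-index-func} no index in $\{O(k)+1,\ldots,L(k)\}$ attains $\tau_{\obj}^k$, and the new index $L(k)+1$ strictly undershoots it; taking the largest index achieving the new maximum yields $O(k+1) = O(k)$. This is case 2.

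The main subtlety I anticipate is the boundary region $L(k) < w - 1$ where the $\max\{\cdot,0\}$ in \cref{eqn-nonmonotone-threshold} is active. In that regime the identity $O(k) = L(k) - w + 1$ cannot hold (the right-hand side is negative while $O(k) \geq 0$), so these early iterations automatically fall into case 2 or case 3; keeping the case split coherent across this boundary — and verifying that the three listed cases are mutually exclusive and exhaustive — is the only place the argument requires explicit bookkeeping.
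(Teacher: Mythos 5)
Your proposal is correct and follows essentially the same route as the paper's proof: split on whether the triggering iterate is accepted, use the nonmonotone Armijo condition together with \cref{property-stepdirection-negative} to get $F(\theta_{k+1}) < \tau_{\obj}^k$, and then split on whether $O(k)$ equals the oldest window index, invoking \cref{result-properties-maxobj-index-func} to track the maximum over the shifted window. Your explicit handling of the $\max\{\cdot,0\}$ boundary is slightly more careful than the paper's, but the substance is identical.
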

\begin{proof}
We recall several facts. First, $\theta_k = \theta_{\ell_{L(k)}}$. Second, either the triggering iterate is rejected, which is equivalent to $L(k+1) = L(k)$; or the triggering iterate is accepted, which is equivalent to $L(k+1) = L(k)+1$ and $\ell_{L(k+1)} = k+1$. 

Now, there are three cases to consider. First, $\theta_k = \theta_{k+1}$. Then, $\tau_{\obj}^k = \tau_{\obj}^{k+1}$ by \cref{eqn-nonmonotone-threshold}. Second, consider when $\theta_k \neq \theta_{k+1}$ and $O(k) > L(k) - w + 1$. Then, $O(k) \geq L(k+1) - w + 1$ since $L(k+1) = L(k)+1$. Using this fact with \cref{result-properties-maxobj-index-func}, 
$\tau_{\obj}^{k+1} = \max\lbrace F(\theta_{k+1}), F(\theta_{\ell_{O(k)}}) \rbrace$. 
Now, so long as $F(\theta_{k+1}) < F(\theta_{\ell_{O(k)}})$, then $\tau_{\obj}^{k+1} = \tau_{\obj}^{k}$ and $O(k+1) = O(k)$.
By our nonmonotone Armijo condition, since $\dot F(\theta_k) \neq 0$, $F(\theta_{\ell_{L(k+1)}}) = F(\theta_{k+1}) < \tau_{\obj}^k = F(\theta_{\ell_{O(k)}})$. Hence, $\tau_{\obj}^{k+1} = \tau_{\obj}^{k}$ and $O(k+1) = O(k)$.

Third, consider when $\theta_k \neq \theta_{k+1}$ and $O(k) = L(k) - w + 1$. Then, $O(k) = L(k) - w + 1 \geq 0$. By our facts about $O(k)$, $F(\theta_{\ell_i}) < \tau_{\obj}^k = F(\theta_{\ell_{O(k)}})$ for $i = L(k) - w + 2,\ldots, L(k)$. Using $L(k) + 1 = L(k+1)$ when $\theta_k \neq \theta_{k+1}$ and \cref{result-properties-maxobj-index-func}, $F(\theta_i) < \tau_{\obj}^k$ for $i = L(k+1) - w + 1,\ldots, L(k+1) - 1$. Moreover, by our nonmonotone Armijo condition and $\dot F(\theta_k) \neq 0$, $F(\theta_{\ell_{L(k+1)}}) = F(\theta_{k+1}) < \tau_{\obj}^k$. Hence, by the definition of $\tau_{\obj}^{k+1}$, $\tau_{\obj}^{k+1} < \tau_{\obj}^k$.
\end{proof}

Part of \Cref{result-threshold-decrease} is as follows: $\tau_{\obj}^{k+1} < \tau_{\obj}^{k}$ only when $k+1$ is an accepted iterate and $O(k) = L(k+1) - w$. In other words, the threshold only can decrease at $k+1$, if $k+1$ is an accepted iterate that is $w$ accepted iterates away from $O(k)$. 
This behavior motivates us to define the following sequence.
\begin{equation} \label{eqn-maxobj-index}
o_0 = 0\quad\text{and}\quad o_s = O(\ell_{o_{s-1} + w}),~\forall s \in \mathbb{N},
\end{equation}
with the convention of $o_s = \infty$ if $\ell_{o_{s-1} + w} = \infty$ (see \cref{figure-objective-threshold-diagram}). With this notation, we formalize the above discussion.

\begin{lemma} \label{result-maxobj-threshold-properties}
Suppose \cref{problem} satisfies \cref{as-loc-lip-cont}, and is solved using \cref{alg-general-algorithm} with  
\cref{property-iter-max,property-stepsize-lower,property-stepsize-upper,property-stepdirection-negative,property-stepdirection-propgrad,property-radius-bounded,property-grad-lower,property-radius-nonnegative,property-grad-upper}
at any $\theta_0 \in \mathbb{R}^n$. 
Let $\lbrace \ell_t : t+1 \in \mathbb{N} \rbrace$ be defined as in \cref{eqn-iterate-subsequence}, 
and let $\lbrace o_s : s + 1 \in \mathbb{N} \rbrace$ be defined as in \cref{eqn-maxobj-index} and let $o_{-1} = -w$. 
For any $s+1 \in \mathbb{N}$, if $o_s < \infty$ then one of the two holds.
\begin{remunerate}
\item $\exists \bar i \in \lbrace o_{s-1} + w - o_{s},\ldots,w \rbrace$ such that $\dot F(\theta_{\ell_{o_s + \bar i}}) = 0$, $\ell_{o_s+i} < \infty$ for all $i \in \lbrace o_{s-1}+w-o_s,\ldots,\bar i \rbrace$, and $\tau_{\obj}^k = F(\theta_{\ell_{o_s}})$ for all $k \in [\ell_{o_{s-1}+w},\max\lbrace\ell_{o_s+\bar{i}}-1, \ell_{o_{s-1}+w} \rbrace] \cap (\mathbb{N} \cup \lbrace 0 \rbrace)$; or
\item $\forall i \in \lbrace o_{s-1}+w-o_s,\ldots, w \rbrace$, $\ell_{o_s+i} < \infty$ and $\dot F(\theta_{\ell_{o_s + i}}) \neq 0$, and $\tau_{\obj}^k = F(\theta_{\ell_{o_s}})$ for all $k \in [\ell_{o_{s-1}+w},\ell_{o_{s}+w}-1] \cap (\mathbb{N} \cup \lbrace 0 \rbrace)$.
\end{remunerate}
\end{lemma}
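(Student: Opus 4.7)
My plan is to argue by induction on $k \geq k_0 := \ell_{o_{s-1}+w}$, simultaneously maintaining the invariants $L(k) = o_s + i$, $O(k) = o_s$, and $\tau_{\obj}^k = F(\theta_{\ell_{o_s}})$, with $i$ increasing from $o_{s-1}+w-o_s$ toward $w-1$ as accepted iterates occur. The scaffolding throughout is \cref{result-threshold-decrease}: as long as $O(k) > L(k)-w+1$, none of its three alternatives can strictly decrease $\tau_{\obj}^k$ or shift $O(k)$, so the invariants propagate. For $s=0$, the stated convention $o_{-1}=-w$ makes $k_0 = \ell_0 = 0$, and $o_0 = 0$ together with the initialization $\tau_{\obj}^0 = F(\theta_0)$ in \cref{alg-general-algorithm} supplies the starting invariant directly.

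Concretely, the hypothesis $o_s<\infty$ forces $\ell_{o_{s-1}+w}<\infty$, and \cref{result-properties-maxobj-index-func} yields $\tau_{\obj}^{k_0}=F(\theta_{\ell_{o_s}})$ with $o_s\in\{o_{s-1}+1,\dots,o_{s-1}+w\}$; hence $O(k_0)-(L(k_0)-w+1) = o_s - o_{s-1} - 1 \geq 0$. For the inductive step, given the invariants at $k$ with $i\in\{o_{s-1}+w-o_s,\dots,w-2\}$ and $\dot F(\theta_k)\neq 0$, \cref{result-threshold-decrease} (its second or third alternative, since $O(k)>L(k)-w+1$) gives $\tau_{\obj}^{k+1}=\tau_{\obj}^k$, $O(k+1)=O(k)$, and $L(k+1)\in\{L(k),L(k)+1\}$. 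Once $i=w-1$, the first alternative can fire on the next acceptance, but the third alternative still handles rejections, so the invariants persist across any stretch of rejected rounds up to $k = \ell_{o_s+w}-1$; the subsequent accepted iterate is $\ell_{o_s+w}$, at which the window finally moves off $o_s$.

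The two conclusions of the lemma then correspond to whether a stationary iterate is met within the window. In the non-stationary case, no $i\in\{o_{s-1}+w-o_s,\dots,w\}$ has $\dot F(\theta_{\ell_{o_s+i}})=0$, so iterating \cref{result-accept-new-iterates} yields $\ell_{o_s+i}<\infty$ for each such $i$, and the inductive argument applies at every $k\in[k_0,\ell_{o_s+w}-1]$, yielding $\tau_{\obj}^k=F(\theta_{\ell_{o_s}})$ on this entire range. In the stationary case, I take $\bar i$ to be the smallest index in $\{o_{s-1}+w-o_s,\dots,w\}$ with $\dot F(\theta_{\ell_{o_s+\bar i}})=0$: for $k\in[k_0,\ell_{o_s+\bar i}-1]$ the iterate $\theta_k$ equals some $\theta_{\ell_{o_s+j}}$ with $j<\bar i$ (hence nonzero gradient), the inductive step applies, and the invariant $\tau_{\obj}^k=F(\theta_{\ell_{o_s}})$ holds throughout. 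Finiteness of $\ell_{o_s+j}$ for $j\leq \bar i$ again follows from iterating \cref{result-accept-new-iterates}.

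The main obstacle I anticipate is the boundary edge case $\bar i=o_{s-1}+w-o_s$, where $\ell_{o_s+\bar i}=k_0$ is itself the stationary iterate: the interval $[k_0,\ell_{o_s+\bar i}-1]$ inverts, the $\max$ in the statement collapses the range to $\{k_0\}$, and the conclusion reduces to the base-case identity $\tau_{\obj}^{k_0}=F(\theta_{\ell_{o_s}})$ already established. A secondary care is to verify that rejected inner-loop rounds never disturb $(\tau_{\obj},O,L)$, which is immediate from the definitions in \cref{eqn-iterate-subsequence,eqn-iterate-subsequence-map,eqn-nonmonotone-threshold,eqn-maxobj-index-func}. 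With these checks, the argument is otherwise a direct induction driven by \cref{result-threshold-decrease,result-accept-new-iterates}.
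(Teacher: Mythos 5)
Your proposal is correct and follows essentially the same route as the paper: an induction across the window of $w$ accepted iterates driven by \cref{result-accept-new-iterates} (finiteness of the next accepted index), \cref{result-simple-properties-accepted-iterates} (propagation of $\tau_{\obj}$ across rejected rounds), and \cref{result-threshold-decrease} (the threshold cannot drop before the window closes). The only organizational difference is that you carry $O(k)=o_s$ as an explicit invariant to rule out the first alternative of \cref{result-threshold-decrease} directly, whereas the paper derives the same fact at each step by a short contradiction argument; both treatments handle the degenerate case $\bar i = o_{s-1}+w-o_s$ the same way.
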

\begin{proof}
We proceed by induction. At each step, we verify that $\ell_{o_s+i} < \infty$ and $\tau_{\obj}^{k} = F(\theta_{\ell_{o_s}})$ for all $k \in [ \ell_{o_{s-1} + w},\min\lbrace \ell_{o_s+i}-1, \ell_{o_{s-1} + w} \rbrace] \cap (\mathbb{N} \cup \lbrace 0 \rbrace)$. Then, there are two cases to deal with: either $\dot F(\theta_{\ell_{o_s+i}}) = 0$ or it does not. In the former case, we produce the first part of the result. In the latter case, we proceed with induction.

For the base case, $i=o_{s-1} + w - o_s$. Now, $o_s < \infty$ by hypothesis, which implies $\ell_{o_s + i} = \ell_{o_{s-1} + w} < \infty$. Moreover,
\begin{equation}
o_s = O(\ell_{o_{s-1}+w}) = \max\left \lbrace t \leq o_{s-1} + w : F(\theta_{\ell_t}) = \tau_{\obj}^{\ell_{o_{s-1} + w}} \right \rbrace,
\end{equation}
which requires $F(\theta_{\ell_{o_s}}) = \tau_{\obj}^{\ell_{o_{s-1} + w}}$. Hence, $\forall k \in [ \ell_{o_{s-1} + w}, \min \lbrace \ell_{o_s+i}-1, \ell_{o_{s-1} + w} \rbrace ] = \lbrace \ell_{o_{s-1} + w} \rbrace$, $F(\theta_{\ell_{o_s}}) = \tau_{\obj}^k$. 

Now, either $\dot F(\theta_{\ell_{o_{s-1} + w}}) = 0$ and $\bar i  = o_{s-1} + w - o_s$ or $\dot F(\theta_{\ell_{o_{s-1} + w}}) \neq 0$ and we can increment $i$.

For the hypothesis, we assume for some $i \in \lbrace o_{s-1} +w-o_s,\ldots, w - 1 \rbrace$, $\ell_{o_s+i} < \infty$, $\tau_{\obj}^{\ell_{o_s+i}} = F(\theta_{\ell_{o_s}})$, and $\dot F(\theta_{\ell_{o_s + i}}) \neq 0$. Furthermore, for any $\tilde i \in \lbrace o_{s-1}+w-o_s,\ldots, i \rbrace$, we assume $\tau_{\obj}^{\ell_{o_s+\tilde i}} = F(\theta_{\ell_{o_s}})$ and $\dot F(\theta_{\ell_{o_s + \tilde i}}) \neq 0$.

We now generalize the result to $i+1$. Since $\dot F(\theta_{\ell_{o_s+i}}) \neq 0$, $\ell_{o_s+i+1} < \infty$ by \cref{result-accept-new-iterates}. Moreover, by \cref{result-simple-properties-accepted-iterates} and the induction hypothesis, $\tau_{\obj}^k = \tau_{\obj}^{\ell_{o_s+i}} = F(\theta_{\ell_{o_s}})$ for $k \in \lbrace \ell_{o_s+i},\ldots, \ell_{o_s+i + 1} - 1 \rbrace$. There are now two cases to consider.

If $i = w- 1$, then either $\dot F(\theta_{\ell_{o_s + i + 1}})$ is zero or not (note, $o_s + i + 1 = o_s + w$). If it is zero, then we set $\bar i = w$ and the first part of the result is proven. If it is not zero, then the second part of the result is proven.

If $i < w - 1$, we must verify $\tau_{\obj}^{\ell_{o_s+i+1}} = F(\theta_{\ell_{o_s}})$. Suppose this is not true. By \cref{result-threshold-decrease}, $\tau_{\obj}^{\ell_{o_s+i+1}} < \tau_{\obj}^{\ell_{o_s + i + 1}-1} = F(\theta_{\ell_{o_s}})$ if and only if $O(\ell_{o_s + i + 1}-1) = L( \ell_{o_s+i+1} - 1) - w + 1 = o_s+i - w + 1$. When $i < w-1$, $O(\ell_{o_s+i+1}-1) = o_s + i - w + 1 < o_s$. Thus, $o_s \in \lbrace o_s + i - w + 2,\ldots, o_s + i + 1\rbrace$ and so
\begin{equation}
\tau_{\obj}^{\ell_{o_s+i+1}} = \max \lbrace F(\theta_{\ell_{o_s+i - w + 2}}),\ldots, F(\theta_{\ell_{o_s+i+1}}) \rbrace \geq F(\theta_{\ell_{o_s}}).
\end{equation}
This contradicts $\tau_{\obj}^{\ell_{o_s+i+1}} < F(\theta_{\ell_{o_s}})$. Hence, $\tau_{\obj}^{\ell_{o_s+i+1}} = F(\theta_{\ell_{o_s}})$.

Now, one of two options can now occur. First, $\dot F(\theta_{\ell_{o_s+i+1}}) = 0$ and $\bar i = o_s + i + 1$, which produces the first part of the result. Second, $\dot F(\theta_{\ell_{o_{s} + i + 1}}) \neq 0$, which concludes the induction proof.
\end{proof}

With the relevant information about the thresholds established, we can now conclude as follows about the behavior of the objective function.

\begin{theorem} \label{result-objective-function}
Suppose \cref{problem} satisfies \cref{as-loc-lip-cont}, and is solved using \cref{alg-general-algorithm} with  
\cref{property-iter-max,property-stepsize-lower,property-stepsize-upper,property-stepdirection-negative,property-stepdirection-propgrad,property-radius-bounded,property-grad-lower,property-radius-nonnegative,property-grad-upper}
at any $\theta_0 \in \mathbb{R}^n$. 
Let $\lbrace \ell_t : t+1 \in \mathbb{N} \rbrace$ be defined as in \cref{eqn-iterate-subsequence}, 
and let $\lbrace o_s : s + 1 \in \mathbb{N} \rbrace$ be defined as in \cref{eqn-maxobj-index}.
Then, one of the following occurs.
\begin{remunerate}
\item There exists a $t + 1 \in \mathbb{N}$ such that $\ell_t < \infty$ and $F(\theta_{\ell_t}) \leq F(\theta_0)$ and $\dot F(\theta_{\ell_t}) = 0$.
\item The elements of $\lbrace o_s : s+1 \in \mathbb{N} \rbrace$ are all finite; for any $s+1 \in \mathbb{N}$ and $\forall k \in [\ell_{o_s},\ell_{o_{s+1}}] \cap (\mathbb{N} \cup \lbrace 0 \rbrace)$, $F(\theta_k) \leq F(\theta_{\ell_{o_s}})$; and the sequence $\lbrace F(\theta_{\ell_{o_s}}) : s + 1 \in \mathbb{N} \rbrace$ is strictly decreasing.
\end{remunerate}
\end{theorem}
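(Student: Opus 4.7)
The plan is to induct on $s$ via repeated application of \cref{result-maxobj-threshold-properties}. The inductive claim at stage $s$ is: either outcome 1 of the theorem has already been exhibited at some earlier stage, or $o_0,\ldots,o_s$ are finite, $F(\theta_{\ell_{o_0}}) > F(\theta_{\ell_{o_1}}) > \cdots > F(\theta_{\ell_{o_s}})$, and $F(\theta_k) \leq F(\theta_{\ell_{o_r}})$ for every $r < s$ and every $k \in [\ell_{o_r}, \ell_{o_{r+1}}] \cap (\mathbb{N}\cup\lbrace 0 \rbrace)$. The base case $s=0$ is immediate from $o_0 = 0 = \ell_0$ together with the convention $o_{-1} = -w$.

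For the inductive step, apply \cref{result-maxobj-threshold-properties} at stage $s$. If its case 1 occurs, the stationary iterate $\theta_{\ell_{o_s+\bar i}}$ delivers outcome 1; the bound $F(\theta_{\ell_{o_s+\bar i}}) \leq F(\theta_0)$ follows because either $\bar i = o_{s-1}+w-o_s$ so $F(\theta_{\ell_{o_s+\bar i}}) = F(\theta_{\ell_{o_s}})$, or $\bar i$ is strictly larger so the nonmonotone Armijo condition at the acceptance of $\theta_{\ell_{o_s+\bar i}}$ (combined with case 1 supplying $\tau_{\obj}^{\ell_{o_s+\bar i}-1} = F(\theta_{\ell_{o_s}})$) gives $F(\theta_{\ell_{o_s+\bar i}}) \leq F(\theta_{\ell_{o_s}})$; in either subcase, the induction hypothesis yields $F(\theta_{\ell_{o_s}}) \leq F(\theta_{\ell_{o_0}}) = F(\theta_0)$.

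If case 2 of \cref{result-maxobj-threshold-properties} occurs, then $\ell_{o_s+w} < \infty$, so $o_{s+1} = O(\ell_{o_s+w}) \leq o_s + w$ is finite and $\ell_{o_{s+1}} \leq \ell_{o_s+w}$. For strict decrease $F(\theta_{\ell_{o_{s+1}}}) < F(\theta_{\ell_{o_s}})$, case 2 guarantees $\dot F(\theta_{\ell_{o_s+i}}) \neq 0$ for every relevant $i$, so by \cref{property-stepdirection-negative,property-stepsize-lower} each acceptance in the block satisfies $F(\theta_{\ell_{o_s+i}}) < \tau_{\obj}^{\ell_{o_s+i}-1} = F(\theta_{\ell_{o_s}})$ strictly for $i=1,\ldots,w$; consequently every term in the window $\lbrace F(\theta_{\ell_{o_s+1}}),\ldots,F(\theta_{\ell_{o_s+w}})\rbrace$ defining $\tau_{\obj}^{\ell_{o_s+w}}$ lies strictly below $F(\theta_{\ell_{o_s}})$, so $F(\theta_{\ell_{o_{s+1}}}) = \tau_{\obj}^{\ell_{o_s+w}} < F(\theta_{\ell_{o_s}})$. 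The pointwise bound on $[\ell_{o_s},\ell_{o_{s+1}}]$ is then obtained by partitioning at $\ell_{o_{s-1}+w}$: to the left, $L(k)$ lies in $[o_s, o_{s-1}+w-1]$ and the characterization of $o_s$ as the latest maximizer at step $\ell_{o_{s-1}+w}$ together with \cref{result-properties-maxobj-index-func} gives $F(\theta_{\ell_{L(k)}}) \leq F(\theta_{\ell_{o_s}})$; to the right, case 2 supplies $\tau_{\obj}^k = F(\theta_{\ell_{o_s}})$, and $F(\theta_k) = F(\theta_{\ell_{L(k)}}) \leq \tau_{\obj}^k$ because $L(k)$ lies inside the max-defining window.

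The main obstacle is the multi-index bookkeeping: tracking $\ell_t$, $L(k)$, $O(k)$, and $o_s$ across both the blocks belonging to stages $s-1$ and $s$, rigorously handling the possibly nonempty subrange $[\ell_{o_s}, \ell_{o_{s-1}+w}-1]$ of the pointwise bound by invoking the correct parts of \cref{result-properties-maxobj-index-func}, and verifying that the strict-descent contribution from nonmonotone Armijo is not disrupted by the reset-iterate branch of \cref{alg-general-algorithm} (which leaves $\theta_k$ and $L(k)$ unchanged and thus does not break the chain of inequalities).
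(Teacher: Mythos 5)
Your proposal follows essentially the same route as the paper: induction on $s$ driven by the two cases of \cref{result-maxobj-threshold-properties}, with the pointwise bound obtained by partitioning $[\ell_{o_s},\ell_{o_{s+1}}]$ at $\ell_{o_{s-1}+w}$ and the strict decrease obtained from the Armijo condition together with the maximality of $O(\ell_{o_{s-1}+w})$. Two small mis-citations are worth fixing, though neither breaks the argument: in case 1 with $\bar i = o_{s-1}+w-o_s$ you should claim $F(\theta_{\ell_{o_s+\bar i}}) \leq F(\theta_{\ell_{o_s}})$ (via parts 3--4 of \cref{result-properties-maxobj-index-func}) rather than equality; and the identity $\tau_{\obj}^{\ell_{o_s+i}-1} = F(\theta_{\ell_{o_s}})$ holds only for $i$ with $o_s+i > o_{s-1}+w$, so for the earlier window members the strict bound $F(\theta_{\ell_{o_s+i}}) < F(\theta_{\ell_{o_s}})$ must instead come from part 3 of \cref{result-properties-maxobj-index-func} applied at $k=\ell_{o_{s-1}+w}$ --- a fact you already invoke for the pointwise bound, so the repair is immediate.
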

\begin{proof}
Let $o_{-1}=-w$. We proceed by induction on $s \in \mathbb{N} \cup \lbrace 0 \rbrace$.

For the base case, $s=0$, \cref{result-maxobj-threshold-properties} specifies two cases. The first case of \cref{result-maxobj-threshold-properties} supplies the first case of the present claim with $t \in \lbrace 0,\ldots,w\rbrace$. In the second case of \cref{result-maxobj-threshold-properties}, then two statements are true:
\begin{remunerate}
\item $\ell_w < \infty$;
\item $F(\theta_{\ell_{0}}) = \tau_{\obj}^k$ for all $k \in [0,\ell_{w}-1] \cap (\mathbb{N} \cup \lbrace 0 \rbrace)$.
\end{remunerate}
By the first statement, $o_1$ is finite. By both statements, our nonmonotone Armijo condition, and \cref{property-stepdirection-negative}, $F(\theta_{k+1}) \leq \tau_{\obj}^k = F(\theta_{\ell_0})$ for all $k \in [0,\ell_{w}-1] \cap (\mathbb{N} \cup \lbrace 0 \rbrace)$. In other words, 
$\forall k \in [\ell_{0},\ell_{w}] \cap (\mathbb{N} \cup \lbrace 0 \rbrace)$, $F(\theta_k) \leq F(\theta_{\ell_{0}})$. As $o_1 \leq w$, 
$\forall k \in [\ell_{0},\ell_{o_1}] \cap (\mathbb{N} \cup \lbrace 0 \rbrace)$, $F(\theta_k) \leq F(\theta_{\ell_0})$. Finally, by \cref{result-threshold-decrease,result-maxobj-threshold-properties}, $F(\theta_{\ell_0}) = \tau_{\obj}^{\ell_w - 1} > \tau_{\obj}^{\ell_w} = F(\theta_{\ell_{o_1}})$.

For the induction hypothesis, for some $s \in \mathbb{N} \cup \lbrace 0 \rbrace$, we assume the elements of $\lbrace o_t : t \in \lbrace 0,\ldots, s \rbrace \rbrace$ are finite; for all $t \in \lbrace 0,\ldots,\max\lbrace s-1, 0 \rbrace \rbrace$ and for all $k \in [\ell_{o_t},\ell_{o_{t+1}}] \cap (\mathbb{N} \cup \lbrace 0 \rbrace)$, $F(\theta_k) \leq F(\theta_{\ell_{o_t}})$; and for all $t \in \lbrace 0,\ldots,\max\lbrace s-1, 0 \rbrace \rbrace$, $F(\theta_{\ell_{o_t}}) > F(\theta_{\ell_{o_{t+1}}})$.

We now generalize to $s+1$. Since $o_s < \infty$ by the induction hypothesis, \cref{result-maxobj-threshold-properties} specifies two cases. In the first case of \cref{result-maxobj-threshold-properties}, there is a $\bar i \in \lbrace o_{s-1} + w - o_s,\ldots, w\rbrace$ such that
\begin{remunerate}
\item $\dot F(\theta_{\ell_{o_s + \bar i}}) = 0$,
\item $\ell_{o_s+i} < \infty$ for all $i \in \lbrace o_{s-1}+w-o_s,\ldots,\bar i \rbrace$, and
\item $\tau_{\obj}^k = F(\theta_{\ell_{o_s}})$ for all $k \in [\ell_{o_{s-1}+w},\min\lbrace \ell_{o_s+\bar i}-1, \ell_{o_{s-1}+w} \rbrace ] \cap (\mathbb{N} \cup \lbrace 0 \rbrace)$.
\end{remunerate} 
Let $ t = o_s + \bar i$. Then, by the first and second statements, $\ell_t < \infty$ and $\dot F(\theta_{\ell_t}) = 0$. By the third statement, our nonmonotone Armijo condition, and \cref{property-stepdirection-negative}, $F(\theta_{\ell_t}) = F(\theta_{\ell_{o_s+\bar i}}) < \tau_{\obj}^{\ell_{o_s+\bar i}-1} = F(\theta_{\ell_{o_s}})$. By the induction hypothesis, $F(\theta_{\ell_t}) \leq F(\theta_{\ell_{o_s}}) < F(\theta_{0})$. Hence, in the first case of \cref{result-maxobj-threshold-properties}, we conclude the first part of the current result.

In the second case of \cref{result-maxobj-threshold-properties}, we need to verify the three claims of the induction hypothesis for $s+1$. First, by \cref{result-maxobj-threshold-properties}, $\ell_{o_s + w} < \infty$, which implies $o_{s+1} < \infty$. Thus, the elements of $\lbrace o_t : t \in \lbrace 0,\ldots,s+1 \rbrace \rbrace$ are finite. 
Second, we must show $\forall k \in [\ell_{o_s}, \ell_{o_{s+1}}] \cap (\mathbb{N} \cup \lbrace 0 \rbrace)$, $F(\theta_k) \leq F(\theta_{\ell_{o_s}})$. By \cref{result-properties-maxobj-index-func} and the definition of $o_s$, $F(\theta_{\ell_t}) < F(\theta_{\ell_{o_s}})$ for all $t \in \lbrace \min\lbrace o_s+1, o_{s-1}+w \rbrace,\ldots, o_{s-1}+w \rbrace$. Therefore, $F(\theta_k) \leq F(\theta_{\ell_{o_s}})$ for all $k \in [\ell_{o_s},\ell_{o_{s-1}+w}] \cap (\mathbb{N} \cup \lbrace 0 \rbrace)$. By the second part of \cref{result-maxobj-threshold-properties}, $F(\theta_{k}) \leq \tau_{\obj}^{k-1} = F(\theta_{\ell_{o_s}})$ for all $k \in [\ell_{o_{s-1}+w}+1,\ell_{o_s+w}] \cap ( \mathbb{N} \cup \lbrace 0 \rbrace )$. Hence, by the induction hypothesis, for all $t \in \lbrace 0,\ldots,\max\lbrace s, 0 \rbrace \rbrace$ and for all $k \in [\ell_{o_t},\ell_{o_{t+1}}] \cap (\mathbb{N} \cup \lbrace 0 \rbrace)$, $F(\theta_k) \leq F(\theta_{\ell_{o_t}})$. Finally, by \cref{result-threshold-decrease,result-maxobj-threshold-properties}, $F(\theta_{\ell_{o_s}}) = \tau_{\obj}^{\ell_{o_s+w} - 1} > \tau_{\obj}^{\ell_{o_s+w}} = F(\theta_{\ell_{o_{s+1}}})$. This concludes the proof by induction.
\end{proof}

\paragraph{Analysis of a gradient subsequence} We study a specific subsequence of the accepted iterates to show that the gradient function evaluated along this subsequence \textit{can} be well-behaved based on the constants in \cref{property-iter-max,property-stepsize-lower,property-stepsize-upper,property-stepdirection-negative,property-stepdirection-propgrad,property-radius-bounded,property-grad-lower,property-radius-nonnegative,property-grad-upper} and the local properties of the Lipschitz rank, $\mathcal{L}(\cdot)$. To specify this sequence, letting $\lbrace \ell_t : t+1 \in \mathbb{N} \rbrace$ and $\lbrace o_s : s + 1 \in \mathbb{N} \rbrace$
be defined as in \cref{eqn-iterate-subsequence,eqn-maxobj-index} (respectively), define
\begin{equation} \label{eqn-gradient-index}
g_0 = 0 \quad\text{and}\quad g_u = \min\lbrace o_s \geq g_{u-1} + w \rbrace, ~\forall u \in \mathbb{N},
\end{equation}
with the convention $g_u = \infty$ if $g_{u-1} = \infty$ or if no finite $o_s$ can be found (see \cref{figure-objective-threshold-diagram}). With this notation, we have the following result, which we emphasize does not depend on the scaling constants $\lbrace \delta_k \rbrace$ and only on the user-designed constants in \cref{property-iter-max,property-stepsize-lower,property-stepsize-upper,property-stepdirection-negative,property-stepdirection-propgrad,property-radius-bounded,property-grad-lower,property-radius-nonnegative,property-grad-upper} and the local properties of the Lipschitz rank.

\begin{lemma} \label{result-zoutendjik}
Suppose \cref{problem} satisfies \cref{as-bounded below,as-loc-lip-cont}, and is solved using \cref{alg-general-algorithm} with  
\cref{property-iter-max,property-stepsize-lower,property-stepsize-upper,property-stepdirection-negative,property-stepdirection-propgrad,property-radius-bounded,property-grad-lower,property-radius-nonnegative,property-grad-upper}
at any $\theta_0 \in \mathbb{R}^n$. 
Let $\lbrace \ell_t : t+1 \in \mathbb{N} \rbrace$ be defined as in \cref{eqn-iterate-subsequence}, 
let $\lbrace o_s : s + 1 \in \mathbb{N} \rbrace$ be defined as in \cref{eqn-maxobj-index},
and let $\lbrace g_u : u+1 \in \mathbb{N} \rbrace$ be defined as in \cref{eqn-gradient-index}.
Let $\lbrace C_k : k+1 \in \mathbb{N} \rbrace$ be a sequence of compact sets in $\mathbb{R}^n$ satisfying: $\theta_{k} \in C_k$; $\lbrace \psi_{1}^k,\ldots,\psi_{j_k}^k \rbrace \subset C_k$; and if $\theta_{k+1} = \theta_k$ then $C_{k+1} \subset C_{k}$ (see \cref{result-iterates-in-compactsets}).
Then, one of the following occurs.
\begin{remunerate}
\item There exists a $t + 1 \in \mathbb{N}$ such that $\ell_t < \infty$ and $F(\theta_{\ell_t}) \leq F(\theta_0)$ and $\dot F(\theta_{\ell_t}) = 0$.
\item The elements of $\lbrace g_u: u+1 \in \mathbb{N} \rbrace$ are all finite, and
\begin{equation}
\sum_{u=1}^\infty 
\underline\alpha (C_{\ell_{g_u-1}})
\underline{g}(C_{\ell_{g_u-1}})
\inlinenorm{\dot F(\theta_{\ell_{g_u-1}})}_2^2 
< \infty.
\end{equation}
\item The elements of $\lbrace g_u : u+1 \in\mathbb{N} \rbrace$ are all finite, $\cup_{k=0}^\infty C_k$ is unbounded, and there exists a subsequence $\mathcal U \subseteq \mathbb{N}$ such that
\begin{equation}
  \sum_{u\in\mathcal U}
  \frac{\underline g(C_u')^2}{\overline g(C_u')^2}
  \frac{\underline \alpha(C_u')}{\overline \alpha(C_u')}
  \frac{\inlinenorm{\dot F(\theta_{\ell_{g_u-1}})}_2^2}{\mathcal L(C_u')} < \infty,
  ~\text{where}~C_u' = \cup_{k = \ell_{g_{u-1}}}^{\ell_{g_u}-1} C_k.
\end{equation}
\end{remunerate}
\end{lemma}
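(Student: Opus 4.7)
My plan is to invoke \cref{result-objective-function} and then telescope the nonmonotone Armijo decrease along the subsequence $\{\theta_{\ell_{g_u}} : u+1 \in \mathbb{N}\}$ to extract gradient summability, with a final case split based on whether $\cup_k C_k$ is bounded.

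First, I apply \cref{result-objective-function}: if its first case holds, the first case of the present claim is immediate; otherwise all elements of $\{o_s\}$ are finite (hence so are all $\{g_u\}$, since each $g_u \in \{o_s\}$), $\{F(\theta_{\ell_{o_s}})\}$ is strictly decreasing, and $F(\theta_k) \leq F(\theta_{\ell_{o_s}})$ whenever $k \in [\ell_{o_s}, \ell_{o_{s+1}}]$. Second, I quantify the decrease at each acceptance of $\theta_{\ell_{g_u}}$: since $g_u \in \{o_s\}$, $\theta_{\ell_{g_u}}$ is accepted at outer iteration $\ell_{g_u}-1$ from $\theta_{\ell_{g_u}-1}=\theta_{\ell_{g_u-1}}$, and the nonmonotone Armijo condition together with \cref{property-stepdirection-negative,property-stepsize-lower} rearranges to
\begin{equation*}
F(\theta_{\ell_{g_u}}) < \tau_{\obj}^{\ell_{g_u}-1} - \rho \delta_{\ell_{g_u}-1} \underline\alpha(C_{\ell_{g_u}-1}) \underline g(C_{\ell_{g_u}-1}) \inlinenorm{\dot F(\theta_{\ell_{g_u-1}})}_2^2.
\end{equation*}
A subclaim then gives $\tau_{\obj}^{\ell_{g_u}-1} \leq F(\theta_{\ell_{g_{u-1}}})$: the threshold is the maximum of $F(\theta_{\ell_t})$ over $t \in [\max\{g_u-w,0\}, g_u-1]$; since $g_u-w \geq g_{u-1}$ and $g_{u-1} \in \{o_s\}$, each such $t$ lies in some interval $[o_s, o_{s+1}]$ with $o_s \geq g_{u-1}$, so \cref{result-objective-function} yields $F(\theta_{\ell_t}) \leq F(\theta_{\ell_{o_s}})$, and the strict decrease of $F$ along $\{o_s\}$ bounds this by $F(\theta_{\ell_{g_{u-1}}})$. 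Telescoping over $u$ and invoking \cref{as-bounded below} produces the key bound
\begin{equation*}
\sum_{u=1}^\infty \rho \delta_{\ell_{g_u}-1} \underline\alpha(C_{\ell_{g_u}-1}) \underline g(C_{\ell_{g_u}-1}) \inlinenorm{\dot F(\theta_{\ell_{g_u-1}})}_2^2 \leq F(\theta_0) - F_{l.b.} < \infty.
\end{equation*}

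Finally, I split based on $\cup_k C_k$. When $\cup_k C_k \subset C$ for some compact $C$, \cref{result-sufficient-scaling} forces $\delta_{\ell_{g_u}-1} \geq \sigma_{\low} \cdot 2(1-\rho)\underline g(C)/[\overline g(C)^2 \mathcal L(C) \overline\alpha(C)]$ uniformly in $u$, since any strictly smaller $\delta$ triggers immediate acceptance and so only finitely many reductions can occur before this threshold; then the telescoped bound, combined with the monotonicity $\underline\alpha(C_{\ell_{g_u}-1})\underline g(C_{\ell_{g_u}-1}) \geq \underline\alpha(C_{\ell_{g_u-1}})\underline g(C_{\ell_{g_u-1}})$ inherited from $C_{\ell_{g_u}-1} \subset C_{\ell_{g_u-1}}$, yields case 2. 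Otherwise $\cup_k C_k$ is unbounded, and I extract the subsequence $\mathcal U$ of blocks $[\ell_{g_{u-1}}, \ell_{g_u}-1]$ in which a rejection has occurred, so that \cref{result-sufficient-scaling} applied at the rejecting iteration gives $\delta_{\ell_{g_u}-1} \geq \sigma_{\low} \cdot 2(1-\rho)\underline g(C_u')/[\overline g(C_u')^2 \mathcal L(C_u') \overline\alpha(C_u')]$; substituting into the telescoped bound then produces case 3. The main obstacle will be the construction of $\mathcal U$ in case 3 and the verification that \cref{result-sufficient-scaling} applied to the block-union $C_u'$ (rather than a smaller $C_k$) correctly bounds $\delta_{\ell_{g_u}-1}$ from below, which requires carefully tracking when the relevant reduction occurred within each block.
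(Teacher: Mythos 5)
Your overall strategy matches the paper's: invoke \cref{result-objective-function} to dispose of case 1 and secure finiteness of $\lbrace g_u \rbrace$; rearrange the nonmonotone Armijo inequality at each acceptance time $\ell_{g_u}$ using $\theta_{\ell_{g_u}-1} = \theta_{\ell_{g_u-1}}$ and the nesting $C_{\ell_{g_u}-1} \subseteq C_{\ell_{g_u-1}}$; prove the subclaim $\tau_{\obj}^{\ell_{g_u}-1} \leq F(\theta_{\ell_{g_{u-1}}})$ exactly as the paper does; telescope against \cref{as-bounded below}; and lower-bound $\delta_{\ell_{g_u}-1}$ via the contrapositive of \cref{result-sufficient-scaling}, with the $\sigma_{\low}$ factor accounting for the reduction at the last rejection in a block. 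All of that is sound and is essentially the paper's argument.

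The one genuine problem is your final case split. You branch on whether $\cup_k C_k$ is bounded, whereas the paper branches on whether $\lbrace \delta_k \rbrace$ is bounded below (equivalently, whether $\liminf_k \delta_k = 0$). These do not line up: it is possible that $\cup_k C_k$ is unbounded while only finitely many (or no) rejections occur, so that $\delta_k$ stays bounded below. In that scenario your construction yields a finite or empty $\mathcal U$, which is not a subsequence of $\mathbb N$, and the sum over it is only vacuously finite; meanwhile you never establish conclusion 2, which is what actually holds there and what \cref{result-gradient} (part 2b) needs from this lemma. The paper avoids this by routing every bounded-below-$\delta$ situation into conclusion 2 --- your own bounded-case telescoping already works verbatim there, since a uniform lower bound $\underline\delta > 0$ on $\delta_k$ is all it uses --- and by reserving conclusion 3 for $\liminf_k \delta_k = 0$, which forces infinitely many rejections spread over infinitely many finite blocks, hence an infinite $\mathcal U$; the unboundedness of $\cup_k C_k$ is then \emph{derived} from $\liminf_k \delta_k = 0$ via \cref{result-sufficient-scaling}, not assumed as the branching criterion. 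The fix is small --- either adopt the paper's split, or add the observation that finitely many rejections imply $\delta_k \geq \underline\delta > 0$ and rerun your case-2 argument --- but as written the unbounded branch fails to prove any of the three stated alternatives in that scenario.
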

\begin{proof}
  By \cref{result-objective-function}, either we are in the first case of the result or $\lbrace o_s : s+1 \in \mathbb{N} \rbrace$ are all finite. Thus, when all elements of $\lbrace o_s : s+1 \in \mathbb{N} \rbrace$ are finite, then the elements of $\lbrace g_u : u+1 \in \mathbb{N} \rbrace$ are all finite. We divide this situation into two cases, which correspond to the second and third parts of the result. 

  First, we consider the case that $\lbrace \delta_k : k+1 \in \mathbb{N} \rbrace$ are bounded from below by $\underline \delta > 0$. We now use this fact with properties of $\lbrace g_u : u+1 \in \mathbb{N} \rbrace$ and the algorithm to conclude the second case of the result. Recall, by our nonmonotone Armijo condition,
  \begin{equation}
    F(\theta_{\ell_{g_{u}}}) < \tau_{\obj}^{\ell_{g_{u}}-1} + \rho \delta_{\ell_{g_{u}}-1} \alpha_0^{\ell_{g_{u}}-1} \dot F(\theta_{\ell_{g_u}-1})^\intercal \gamma_0^{\ell_{g_{u}}-1}, ~\forall u \in \mathbb{N}.
  \end{equation}
  By construction, $\theta_{\ell_{g_u}-1} = \theta_{\ell_{g_u-1}}$. Moreover, $C_{\ell_{g_u}-1} \subseteq C_{\ell_{g_u-1}}$. Applying this, \cref{property-stepdirection-negative,property-stepsize-lower}, the lower bound on $\lbrace \delta_k : k+1 \in \mathbb{N} \rbrace$, and rearranging, we obtain
  % \begin{equation}
  %   F(\theta_{\ell_{g_{u}}}) < \tau_{\obj}^{\ell_{g_{u}}-1} - \rho \delta_{\ell_{g_{u}}-1} \alpha_0^{\ell_{g_{u}}-1} \underline g (C_{\ell_{g_u-1}}) \inlinenorm{ \dot F(\theta_{\ell_{g_u-1}})}_2^2, ~\forall u \in \mathbb{N}.
  % \end{equation}
  % By \cref{result-sufficient-scaling}, and the lower bound on $\lbrace \delta_k : k+1 \in \mathbb{N} \rbrace$
  % \begin{equation}
  %   F(\theta_{\ell_{g_u}}) < \tau_{\obj}^{\ell_{g_{u}}-1} - \rho \underline\delta \underline \alpha (C_{\ell_{g_u-1}}) \underline g (C_{\ell_{g_u-1}}) \inlinenorm{ \dot F(\theta_{\ell_{g_u-1}})}_2^2, ~\forall u \in \mathbb{N}.
  % \end{equation}
  % Rearranging,
  \begin{equation}
    \underline \alpha (C_{\ell_{g_u-1}}) \underline g (C_{\ell_{g_u-1}}) \inlinenorm{ \dot F(\theta_{\ell_{g_u-1}})}_2^2 < \frac{\tau_{\obj}^{\ell_{g_{u}}-1} - F(\theta_{\ell_{g_u}})}{\rho \underline\delta}
  \end{equation}
  Now, $\tau_{\obj}^{\ell_{g_u}-1} = F(\theta_{\ell_{o_s}})$ where $o_s \in \lbrace g_u - w,\ldots,g_u-1 \rbrace$. Since $g_{u-1} \leq g_{u} - w$ by construction, the second part of \cref{result-objective-function} states $F(\theta_{\ell_{o_s}}) \leq F(\theta_{\ell_{g_{u-1}}})$.
  Hence,
  \begin{equation}
    \underline \alpha (C_{\ell_{g_u-1}}) \underline g (C_{\ell_{g_u-1}}) \inlinenorm{ \dot F(\theta_{\ell_{g_u-1}})}_2^2 < \frac{F(\theta_{\ell_{g_{u-1}}}) - F(\theta_{\ell_{g_u}})}{\rho \underline\delta}
  \end{equation}
  Taking the sum over all $u \in \mathbb{N}$ and using \cref{as-bounded below}, the right hand side is bounded by $[F(\theta_{0}) - F_{l.b.}]/[\rho\underline \delta]$, which is finite. The second case of the result follows.

  We now consider what happens when $\lbrace g_u : u+1 \in \mathbb{N} \rbrace$ are all finite and $\liminf_{k\to\infty} \delta_k = 0$. If $\liminf_{k\to\infty} \delta_k = 0$, there must be a subsequence of $\mathbb{N}$ of rejected outer loop iterates $\lbrace \theta_k \rbrace$. The existence of such a subsequence will be used twice: once to show that $\cup_{k=0}^\infty C_k$ is unbounded, and then to define a subsequence $\mathcal U$ of $\mathbb N$. 

  First, suppose $\cup_{k=0}^\infty C_k$ is bounded. Then, there exists a compact set $C$ such that $\cup_{k=0}^\infty C_k \subset C$. Let $k \in \mathbb N$ be the first time $\delta_k < 2(1-\rho)\sigma_{\low} \underline g (C)/\overline g (C)^2 \overline \alpha (C) \mathcal L (C)$. Then, $\psi_{j_{k-1}}^{k-1}$ failed the nonmonotone Armijo condition with $\delta_{k-1} < 2(1-\rho) \underline g (C)/\overline g (C)^2 \overline \alpha (C) \mathcal L (C)$. However, since $C_{k-1} \subseteq C$ and $\dot F(\theta_{k-1}) \not= 0$, this contradicts \cref{result-sufficient-scaling}. Hence, $\cup_{k=0}^\infty C_k$ is unbounded.

  Second, define $\mathcal U$ to be all $u \in \mathbb N$ such that $\lbrace \ell_{g_{u-1}}, ..., \ell_{g_u}-1 \rbrace$ contains a rejected iterate. For $u \in \mathcal U$, let
  $C_u' = \cup_{k = \ell_{g_{u-1}}}^{\ell_{g_u}-1} C_k$. Note, by the properties of $C_k$ and definition of $g_u$, there are at most $2w$ sets contributing to $C_u'$, implying that $C_u'$ is compact for all $u \in \mathcal U$. We now follow a similar set of steps as in the preceding case. By our nonmonotone Armijo condition,
  \begin{equation}
    F(\theta_{\ell_{g_{u}}}) < \tau_{\obj}^{\ell_{g_{u}}-1} + \rho \delta_{\ell_{g_{u}}-1} \alpha_0^{\ell_{g_{u}}-1} \dot F(\theta_{\ell_{g_u}-1})^\intercal \gamma_0^{\ell_{g_{u}}-1}, ~\forall u \in \mathcal U.
  \end{equation}
  By \cref{property-stepdirection-negative} and \cref{result-sufficient-scaling},
  \begin{equation}
    F(\theta_{\ell_{g_u}}) < \tau_{\obj}^{\ell_{g_{u}}-1} - \rho \delta_{\ell_{g_u}-1} \underline \alpha (C_{u}') \underline g (C_{u}') \inlinenorm{ \dot F(\theta_{\ell_{g_u-1}})}_2^2, ~\forall u \in \mathcal U.
  \end{equation}
  By \cref{result-sufficient-scaling} and construction of $\mathcal U$,
  \begin{equation}
    \delta_{\ell_{g_u}-1} \geq \frac{2(1-\rho)\underline g(C_u')\sigma_{\low}}{\overline\alpha(C_u')\overline g(C_u')^2\mathcal L(C_u')}.
  \end{equation}
  Putting these pieces together and rearranging,
  \begin{equation}
    \frac{\underline g(C_u')^2}{\overline g(C_u')^2}
    \frac{\underline \alpha(C_u')}{\overline \alpha(C_u')}
    \frac{\inlinenorm{\dot F(\theta_{\ell_{g_u-1}})}_2^2}{\mathcal L(C_u')} <
    \frac{\tau_{\obj}^{\ell_{g_u}-1} - F(\theta_{\ell_{g_u}})}{2\rho(1-\rho)\sigma_{\low}}, ~\forall u \in \mathcal U.
  \end{equation}
  Applying the second part of \cref{result-objective-function}, $\tau_{\obj}^{\ell_{g_u}-1} \leq F(\theta_{\ell_{g_{u-1}}})$, and $F(\theta_{\ell_{g_{u-1}}})-F(\theta_{\ell_{g_u}}) \geq 0, ~\forall u \in \mathbb N$. Hence,
  \begin{equation}
    \begin{aligned}
      \sum_{u\in\mathcal U}
      \frac{\underline g(C_u')^2}{\overline g(C_u')^2}
      \frac{\underline \alpha(C_u')}{\overline \alpha(C_u')}
      \frac{\inlinenorm{\dot F(\theta_{\ell_{g_u-1}})}_2^2}{\mathcal L(C_u')} &< \sum_{u\in\mathcal U} \frac{F(\theta_{\ell_{g_{u-1}}}) - F(\theta_{\ell_{g_u}})}{2\rho(1-\rho)\sigma_{\low}}\leq \sum_{u=1}^\infty \frac{F(\theta_{\ell_{g_{u-1}}}) - F(\theta_{\ell_{g_u}})}{2\rho(1-\rho)\sigma_{\low}}
    \end{aligned}
  \end{equation}
  The right-hand side is a telescoping sum which is finite by \cref{as-bounded below}.
\end{proof}

We now provide sufficient conditions which guarantee that a procedure within our methodology will find a region of the objective function whose gradient is nearly zero. In particular, we roughly say that the procedure terminates in finite time; or, if the iterates remain in a bounded region, then they will come arbitrarily close to a first order stationary point; or, if the Lipschitz rank grows at most quadratically (which improves the results of \cite{li2023convex}), then the iterates will find a region of the gradient function that is arbitrarily close to zero.

\begin{theorem} \label{result-gradient}
Suppose \cref{problem} satisfies \cref{as-bounded below,as-loc-lip-cont}, and is solved using \cref{alg-general-algorithm} with  
\cref{property-iter-max,property-stepsize-lower,property-stepsize-upper,property-stepdirection-negative,property-stepdirection-propgrad,property-radius-bounded,property-grad-lower,property-radius-nonnegative,property-grad-upper}
at any $\theta_0 \in \mathbb{R}^n$. 
Then, we have the following possible outcomes.
\begin{remunerate}
  \item (Finite Termination) There exists a $k + 1 \in \mathbb{N}$ such that $F(\theta_{k}) \leq F(\theta_0)$ and $\dot F(\theta_{k}) = 0$.
  \item (Infinite Iterates) The sequences $\lbrace \theta_k : k + 1 \in \mathbb N \rbrace$ is infinite, $\lbrace F(\theta_{k}) : k+1 \in \mathbb{N} \rbrace$ is bounded with a strictly decreasing subsequence. Let $\lbrace C_k : k+1 \in \mathbb{N} \rbrace$ be a sequence of compact sets in $\mathbb{R}^n$ satisfying: $\theta_{k} \in C_k$; $\lbrace \psi_{1}^k,\ldots,\psi_{j_k}^k \rbrace \subset C_k$; and if $\theta_{k+1} = \theta_k$ then $C_{k+1} \subset C_{k}$ (see \cref{result-iterates-in-compactsets}). Then, there are three outcomes.
  \begin{remunerate}
    \item If $\cup_{k\in\mathbb N} C_k$ is bounded, then $\lbrace \theta_k \rbrace$ are bounded and
    $\liminf_{k \to \infty} \inlinenorm{ \dot F(\theta_k)}_2 = 0$.
    \item If $\cup_{k\in\mathbb N} C_k$ is unbounded, $\{\delta_k : k +1 \in \mathbb N\}$ is bounded from below, and 
    \begin{equation}
      \liminf_{k\to\infty} \underline\alpha(C_k) \underline g(C_k) > 0,
    \end{equation}
    then $\liminf_{k \to \infty} \inlinenorm{ \dot F(\theta_k)}_2 = 0$.
    \item If $\cup_{k\in\mathbb N} C_k$ is unbounded, $\liminf_{k \to \infty} \delta_k = 0$,
    \begin{equation}
    \liminf_{u \to \infty} \frac{\underline g (C_u')^2 \underline \alpha (C_u')}{\overline g(C_u')^2 \overline \alpha (C_u')} > 0, \text{\quad where \quad} C_u' = \bigcup_{k=\ell_{g_{u-1}}}^{\ell_{g_u}-1} C_k,
    \end{equation}
    and, for some $w_1 \geq 0$ and $w_2 \in [0,2]$, $\exists c_0,c_1, c_2 \geq 0$ such that $\mathcal{L}(C_u') \leq c_0 + c_1 (F(\theta_{\ell_{g_{u-1}}}) - F_{l.b.})^{w_1} + c_2 \inlinenorm{ \dot F(\theta_{\ell_{g_u-1}})}_2^{w_2}$, then $\liminf_{k \to \infty} \inlinenorm{ \dot F(\theta_k)}_2 = 0$.
    \end{remunerate}
\end{remunerate} 
\end{theorem}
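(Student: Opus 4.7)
The plan is to combine \cref{result-objective-function} and \cref{result-zoutendjik} and then split into the three geometric regimes described in the Infinite Iterates case.

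First, I would apply \cref{result-objective-function}. Either its first case fires, giving the Finite Termination conclusion, or its second case gives us all $o_s$ finite, a strictly decreasing subsequence $\lbrace F(\theta_{\ell_{o_s}}) \rbrace$, and $F(\theta_k) \leq F(\theta_{\ell_{o_s}}) \leq F(\theta_0)$ for all $k \in [\ell_{o_s}, \ell_{o_{s+1}}] \cap (\mathbb N \cup \lbrace 0 \rbrace)$. Combined with \cref{as-bounded below}, this establishes the opening boundedness and strictly decreasing subsequence claim of the Infinite Iterates case, and ensures the iterate sequence is infinite. Next, I would invoke \cref{result-zoutendjik}; its first outcome is ruled out by the above, so either (A) $\sum_u \underline\alpha(C_{\ell_{g_u-1}}) \underline g(C_{\ell_{g_u-1}}) \inlinenorm{\dot F(\theta_{\ell_{g_u-1}})}_2^2 < \infty$, which by the proof of \cref{result-zoutendjik} only requires $\lbrace \delta_k \rbrace$ to be bounded from below; or (B) $\cup_k C_k$ is unbounded, $\liminf_k \delta_k = 0$, and the more delicate summability over a subsequence $\mathcal U \subseteq \mathbb N$ holds.

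For sub-case 2(a), if $\cup_k C_k$ is bounded inside some compact $C$, then the proof of \cref{result-zoutendjik} rules out $\liminf_k \delta_k = 0$, so (A) applies; using the monotonicity $\underline\alpha(C_{\ell_{g_u-1}}) \geq \underline\alpha(C) > 0$ and $\underline g(C_{\ell_{g_u-1}}) \geq \underline g(C) > 0$ under set inclusion, $\sum_u \inlinenorm{\dot F(\theta_{\ell_{g_u-1}})}_2^2 < \infty$, and therefore $\liminf_k \inlinenorm{\dot F(\theta_k)}_2 = 0$. Sub-case 2(b) is analogous: (A) holds by the hypothesis on $\lbrace \delta_k \rbrace$, and the assumption $\liminf_k \underline\alpha(C_k)\underline g(C_k) > 0$ bounds the weights in (A) away from zero along $\lbrace \ell_{g_u-1} \rbrace$ for large $u$, forcing $\inlinenorm{\dot F(\theta_{\ell_{g_u-1}})}_2 \to 0$ along a subsequence.

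The hard part will be sub-case 2(c), where $\mathcal L(C_u')$ is allowed to grow. I would proceed by contradiction, assuming $\liminf_k \inlinenorm{\dot F(\theta_k)}_2 \geq \epsilon > 0$ so that $x_u := \inlinenorm{\dot F(\theta_{\ell_{g_u-1}})}_2 \geq \epsilon$ for all large $u$. Since $F(\theta_{\ell_{g_{u-1}}}) - F_{l.b.} \leq F(\theta_0) - F_{l.b.} =: K_F$ from the first step, the growth hypothesis yields $\mathcal L(C_u') \leq c_0 + c_1 K_F^{w_1} + c_2 x_u^{w_2}$. A case split on whether $c_2 x_u^{w_2}$ dominates $c_0 + c_1 K_F^{w_1}$ gives
\begin{equation}
\frac{x_u^2}{\mathcal L(C_u')} \geq \min\left(\frac{x_u^2}{2(c_0 + c_1 K_F^{w_1})}, \frac{x_u^{2-w_2}}{2c_2}\right),
\end{equation}
and using $w_2 \in [0,2]$ together with $x_u \geq \epsilon$ makes this a fixed positive lower bound. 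Coupled with the assumption $\liminf_u \underline g(C_u')^2 \underline \alpha(C_u')/(\overline g(C_u')^2 \overline \alpha(C_u')) > 0$, each term of the $\mathcal U$-sum in \cref{result-zoutendjik}(3) is bounded below by a positive constant. Because $\liminf_k \delta_k = 0$ forces infinitely many rejected iterates and hence $|\mathcal U| = \infty$, this contradicts the summability, yielding $\liminf_k \inlinenorm{\dot F(\theta_k)}_2 = 0$ as required.
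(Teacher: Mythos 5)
Your proposal is correct and follows essentially the same route as the paper: invoke \cref{result-objective-function} to dispose of finite termination and establish boundedness with a strictly decreasing subsequence, then feed the two summability outcomes of \cref{result-zoutendjik} into the three geometric sub-cases, using the boundedness-implies-$\liminf_k \delta_k > 0$ observation for 2(a) and the $\kappa$-lower bounds on the step-direction/step-size constants for 2(b) and 2(c). The only cosmetic difference is in 2(c), where you run a direct contradiction from $\liminf_k \inlinenorm{\dot F(\theta_k)}_2 \geq \epsilon$ with a $\min$-based lower bound on $x_u^2/\mathcal L(C_u')$ (together with the correct observation that $\liminf_k\delta_k = 0$ forces $|\mathcal U| = \infty$), whereas the paper first rules out $\limsup_{u\in\mathcal U}\inlinenorm{\dot F(\theta_{\ell_{g_u-1}})}_2 = \infty$ and then bounds the denominator; both arguments rest on the same inequality and are equivalent.
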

\begin{proof}
  By \cref{result-objective-function}, either the algorithm reaches a stationary point, or there are infinite iterates, and all elements of $\{o_s:s+1\in\mathbb{N}\}$ and $\{g_u:u+1\in\mathbb{N}\}$ are finite. We proceed by proving parts 2a, 2b, then 2c. Considering the first case, when $\cup_{k \in \mathbb N} C_k$ is bounded, then there exists a compact set $C$ such that $\cup_{k \in \mathbb{N}} C_k \subset C$. Since $\theta_k \in C_k$, $\limsup_k \inlinenorm{ \theta_k}_2 < \infty$, and thus $\lbrace \theta_k \rbrace$ are bounded. To show $\liminf_{k} \inlinenorm{\dot F(\theta_{k})}_2^2 = 0$, note that $\cup_{k \in \mathbb N} C_k$ being bounded implies $\liminf_{k \to \infty} \delta_k > 0$. Therefore, there exists a $\underline \delta > 0$ and $K \in \mathbb{N}$ such that $\delta_k > \underline \delta, ~\forall k \geq K$. Applying the proof of part 2 of \cref{result-zoutendjik}, and using that $0<\underline\alpha (C)\underline{g}(C) \leq \underline\alpha (C_{\ell_{g_u-1}}) \underline{g}(C_{\ell_{g_u-1}}), ~\forall u \in \mathbb N$, we conclude that
  \begin{equation}
    \underline\alpha (C)
    \underline{g}(C)
    \sum_{u:\ell_{g_u-1} \geq K}
    \inlinenorm{\dot F(\theta_{\ell_{g_u-1}})}_2^2 \leq
    \sum_{u:\ell_{g_u-1} \geq K}
    \underline\alpha (C_{\ell_{g_u-1}})
    \underline{g}(C_{\ell_{g_u-1}})
    \inlinenorm{\dot F(\theta_{\ell_{g_u-1}})}_2^2 
    < \infty.
  \end{equation}
  This implies $\liminf_{k} \inlinenorm{\dot F(\theta_{k})}_2^2 = 0$. Now consider the case when $\cup_{k \in \mathbb N} C_k$ is unbounded. For the first of these two cases (part 2b), we conclude directly by \cref{result-zoutendjik} that
  \begin{equation}
    \sum_{u=1}^\infty
    \underline\alpha (C_{\ell_{g_u-1}})
    \underline{g}(C_{\ell_{g_u-1}})
    \inlinenorm{\dot F(\theta_{\ell_{g_u-1}})}_2^2 
    < \infty.
  \end{equation}
  By the assumptions stated in the theorem, there exists a $\kappa > 0$ and $K \in \mathbb{N}$ such that 
  \begin{equation}
    \underline\alpha (C_{k})
    \underline{g}(C_{k}) \geq \kappa, ~\forall k \geq K.
  \end{equation}
  This in turn implies that $\sum_{u:\ell_{g_u -1} \geq K} \inlinenorm{\dot F(\theta_{\ell_{g_u-1}})}_2^2 < \infty$. Hence, $\liminf_{k} \inlinenorm{\dot F(\theta_{k})}_2^2 = 0$.
  
  Lastly, in the third case there exists a (different) $\kappa > 0$, and $U \in \mathbb{N}$, such that
  \begin{equation}
    \frac{\underline g (C_u')^2 \underline \alpha (C_u')}{\overline g(C_u')^2 \overline \alpha (C_u')} \geq \kappa, ~\forall u \geq U.
  \end{equation}
  Moreover, since $F(\theta_k) \leq F(\theta_0)$ by \cref{result-objective-function}, $\exists c_0' > 0$ such that $\mathcal{L}(C_u') \leq c_0' + c_2 \inlinenorm{ \dot F(\theta_{\ell_{g_u-1}})}_2^{w_2}$ for all $u \in \mathbb N$. Using these two facts and part 3 of \cref{result-zoutendjik}, there exists an index set $\mathcal U \subseteq \mathbb N$ such that 
  \begin{equation}
    \sum_{u \in \mathcal U, ~ u \geq U} \frac{\inlinenorm{ \dot F(\theta_{\ell_{g_u-1}})}_2^2}{c_0' + c_2 \inlinenorm{ \dot F(\theta_{\ell_{g_u-1}})}_2^{w_2} } < \infty.
  \end{equation}
  Now, if $c_2 = 0$ the result follows. Suppose $c_2 > 0$. We now verify that $\limsup_{u\in\mathcal U} \inlinenorm{ \dot F(\theta_{\ell_{g_u-1}})}_2 < \infty$ by contradiction, which will yield an upper bound for the denominator in the preceding inequality and, in turn, imply $\lim_{u\in\mathcal U} \inlinenorm{ \dot F(\theta_{\ell_{g_u-1}})}_2 = 0$. To this end, suppose $\limsup_{u\in\mathcal U} \inlinenorm{ \dot F(\theta_{\ell_{g_u-1}})}_2 = \infty$. Then, there exists a subsequence $\mathcal U' \subset \mathcal U$ such that $\forall u' \in \mathcal U' \Rightarrow 0$
  \begin{equation}
    0 < \frac{c_0'}{2 c_2} \leq \inlinenorm{ \dot F(\theta_{\ell_{g_{u'}-1}})}_2^2 - \frac{1}{2}\inlinenorm{ \dot F(\theta_{\ell_{g_{u'}-1}})}_2^{w_2} \Rightarrow 0 < \frac{1}{2 c_2} \leq \frac{\inlinenorm{ \dot F(\theta_{\ell_{g_{u'}-1}})}_2^2}{c_0' + c_2 \inlinenorm{ \dot F(\theta_{\ell_{g_{u'}-1}})}_2^{w_2} }.
  \end{equation} 
  Hence, we arrive at the contradiction, $\sum_{u \in \mathcal U', ~ u \geq U} (2c_2)^{-1} < \infty$. The result follows.
\end{proof}

\section{A Novel Step Size Procedure} \label{sec:StepSize}
Having introduced our general framework and discussed its theoretical properties in \cref{sec:Algo}, we present a well-performing instance (see \cref{sec:Results-cutest,sec:Results-gee}) of our framework that is equipped with a novel step size technique using negative gradient directions. The instance is specified by \cref{alg:novel-step-size,alg:update}, and there are two differences between \cref{alg:novel-step-size} and our general method. First, \cref{alg:novel-step-size} has specific parameters and subroutines. Second, \cref{alg:novel-step-size} includes an additional ``else-if'', which is superficial: it seperates the case when the gradient is above the upper threshold, and is exactly the same as accepting iterates normally, except for the addition of gradient threshold updating. We now describe our novel step size routine, and then provide some specific theory for this procedure.

\begin{algorithm}[!ht]
    \caption{Novel Step Size Method Applied within Our Framework}
    \label{alg:novel-step-size}
    \begin{algorithmic}[1]
    \Require $F, \dot F, \theta_{0}, \epsilon > 0$    
     
    % Outer Loop Counter
    \State $k \leftarrow 0$ \Comment{Outer loop counter}
   
	% Step Size 
    \State $\delta_k \leftarrow 1$ \Comment{Step size scaling}

    % non-monotone line search parameters
    \State $w \leftarrow 10$ \Comment{Number of objective values used in nonmonotone search}

    % Objective Function Value
    \State $\tau_{\obj}^0 \leftarrow F(\theta_0)$ \Comment{Nonmontone search threshold}
    \State $\tau_{\gra,\low}^0, \tau_{\gra,\upp}^0 \leftarrow \inlinenorm{\dot{F}(\theta_0)}/\sqrt{2}, \sqrt{20}\tau_{\gra,\low}^0$ \Comment{Test thresholds on gradient}
    
    % Outer Loop
    \While{$\inlinenorm{ \dot F(\theta_k) } > \epsilon$} \Comment{Outer loop}
    
    % Inner Loop Counter
    \State $j, \psi_0^k \leftarrow 0, \theta_k$ \Comment{Inner loop counter and initialization}
    
   	% Inner Loop
    \While{$\true$} \Comment{Inner loop}
    		%Step Direction and Size
            \State $\hat{L}_j^k \leftarrow \Call{Update}{j, k}$ \Comment{Helper function; Update local Lipschitz approximation}
    		\State $\alpha^k_{j} \leftarrow 
            \min\bigpar{ \frac{(\tau^{k}_{\gra,\low})^2}{\inlinenorm{\dot{F}(\psi_j^k)}_2^3 + .5\inlinenorm{\dot{F}(\psi_j^k)}_2^2 \hat{L}_j^k + 10^{-16 }}, \frac{1}{ \inlinenorm{\dot{F}(\psi_j^k)}_2 + .5 \hat{L}_j^k + 10^{-16} }} + 10^{-16}$ \Comment{Novel step size}
    		
    		%Check if in the exit or growth in gradient
    		\If{ $\inlinenorm{ \psi_j^k - \theta_k }_2 > 10$ 
    			\textbf{or} $\inlinenorm{ \dot F(\psi_j^k)}_2 \not\in (\tau_{\gra,\low}^k, \tau^k_{\gra,\upp})$ 
    			\textbf{or} $j == 100$} 
    			\If{ $F(\psi_j^k) \geq \tau_{\obj}^k - 10^{-4} \delta_k \alpha_0^k \inlinenorm{\dot{F}(\theta_k)}^2 $ }
    			\Comment{Fails nonmonotone Armijo condition}
    				\State $\theta_{k+1}, \delta_{k+1} \leftarrow \theta_k, .5 \delta_k$
    				\Comment{Reset iterate with reduced step size scaling}
    			\ElsIf{ $\inlinenorm{ \dot F(\psi^k_j)}_2 \leq \tau_{\gra,\low}^k$ }
    				\State $\theta_{k+1}, \delta_{k+1} \leftarrow \psi_j^k, \delta_k$
				\Comment{Accept iterate and leave step size unchanged}    			
    				\State Set $\tau_{\obj}^{k+1} \leftarrow$ by \cref{eqn-nonmonotone-threshold}
                
				    \State $\tau_{\gra,\low}^{k+1}, \tau_{\gra,\upp}^{k+1} \leftarrow \inlinenorm{\dot{F}(\theta_{k+1})}/\sqrt{2}, \sqrt{20}\tau_{\gra,\low}^{k+1}$
					\Comment{Reset gradient thresholds}
				\ElsIf{$\inlinenorm{ \dot F(\psi^k_j)}_2 \geq \tau_{\gra,\upp}^k$}
					\State $\theta_{k+1}, \delta_{k+1}, \leftarrow \psi_j^k, \min\{1.5\delta_k, 1\}$
					\Comment{Accept iterate and increase step size}
    				\State Set $\tau_{\obj}^{k+1} \leftarrow$ by \cref{eqn-nonmonotone-threshold}
					\State $\tau_{\gra,\low}^{k+1}, \tau_{\gra,\upp}^{k+1} \leftarrow\inlinenorm{\dot{F}(\theta_{k+1})}/\sqrt{2}, \sqrt{20}\tau_{\gra,\low}^{k+1}$
					\Comment{Reset gradient thresholds}
    			\Else
    				\State $\theta_{k+1}, \delta_{k+1}, \leftarrow \psi_j^k, \min\{1.5\delta_k, 1\}$
    				\Comment{Accept iterate and increase step size}
    				\State Set $\tau_{\obj}^{k+1} \leftarrow$ by \cref{eqn-nonmonotone-threshold}
					\State $\tau_{\gra,\low}^{k+1}, \tau_{\gra,\upp}^{k+1} \leftarrow \tau_{\gra,\low}^{k} \tau_{\gra,\upp}^{k} $
					\Comment{No gradient violation so use past interval}
    			\EndIf
			\State $k \leftarrow k+1$
    			\State Exit Inner Loop
    		\EndIf
    		
    		\State $\psi_{j+1}^k, j \leftarrow \psi_j^k - \delta_k \alpha_j^k \dot{F}(\psi^k_j), j+1$
    		\Comment{Standard gradient descent}
    
    \EndWhile %End inner loop

    \EndWhile %End Outer Loop
	\State \Return $\theta_k$    
    \end{algorithmic}
    
\end{algorithm} 

\begin{algorithm}[!ht]
    \caption{Update Scheme for Local Lipschitz Approximation}
    \label{alg:update}
    \begin{algorithmic}[1]
        \Require $\dot{F}, \psi_{j}^k, \psi_{j-1}^k, L(k)$
        \Function{Update}{$j,k$} \Comment{Method for Local Lipschitz Approximation}
            \If{$j == 0$ \textbf{and} $k == 0$}
                \State $\hat{L}_0^0 \leftarrow 1$
                \Comment{Initial value}
            \ElsIf{$j == 0$ \textbf{and} $k > 0$}
                \State $\hat{L}_0^k \leftarrow \hat{L}_{j_{k-1}}^{k-1}$
                \Comment{Use previous estimate in subsequent initial inner loop iterations}
            \ElsIf{$j > 0$ \textbf{and} $k \ge 0$ \textbf{and} $L(k)=k$}
                \State $\hat{L}_{j}^k \leftarrow \frac{\inlinenorm{\dot{F}(\psi_j^k) - \dot{F}(\psi_{j-1}^k)}_2}{\inlinenorm{\psi_j^k-\psi_{j-1}^k}_2} $
                \Comment{Aggressive estimate when $\psi^{k-1}_{j_{k-1}}$ was accepted}
            \ElsIf{$j > 0$ \textbf{and} $k \geq 0$ \textbf{and} $L(k)\not=k$}
                \State $\hat{L}_{j}^k \leftarrow \max\bigpar{ \frac{\inlinenorm{\dot{F}(\psi_j^k) - \dot{F}(\psi_{j-1}^k)}_2}{\inlinenorm{\psi_j^k-\psi_{j-1}^k}_2}, \hat{L}_{j-1}^k }$
                \Comment{Conservative estimate when $\psi^{k-1}_{j_{k-1}}$ was rejected}
            \EndIf
            \State \Return $\hat{L}_j^k$
        \EndFunction
    \end{algorithmic}
\end{algorithm}

\subsection{Novel Step Size Routine Description} Our step size method uses a local Lipschitz approximation to perform the update. We first contextualize our local Lipschitz approximation strategy (Line 9 of \cref{alg:novel-step-size}) and then discuss how it is used to compute the step size (Line 10 of \cref{alg:novel-step-size}).

The local Lipschitz approximation is calculated in \cref{alg:update}, and has four cases: an initialization phase, re-initialization phase for subsequent initial inner loop iterations, an aggressive phase if the current outer-loop iterate, $\theta_k$, was accepted, and a conservative phase if the previous outer-loop iterate was rejected. While similar local Lipschitz approximation ideas exist \cite{nesterov2012GradientMF,curtis2018exploiting,malitsky2020adaptive,zhang2020firstorder,malitsky2023adaptive}, our method differs in that the approximation is used irrespective of any local model in the region, eliminating the need for objective evaluations to verify the accuracy of such a model. \textit{We emphasize that we do not explicitly assume any type of global behavior, nor knowledge of the local Lipschitz constant in the algorithm}.

Using the local Lipschitz approximation, the step size calculation occurs (see Line 10 of \cref{alg:novel-step-size}). The step size is selected as the minimum of two quantities: the first quantity is inspired by leveraging our first triggering event and Zoutendjik's analysis method to ensure descent, while the second ensures nice theoretical properties (see \cref{lemma:novel-step-size-satisfies-properties}). The constants $10^{-16}$ appear for numerical stability.

\paragraph{Theory} We specialize the theory from \cref{sec:Algo} to \cref{alg:novel-step-size}. We verify that \cref{alg:novel-step-size} satisfies the subroutine properties in \cref{lemma:novel-step-size-satisfies-properties}, which will allow us to apply our previous results. The convergence analysis then follows by utilizing \Cref{result-gradient}, which just requires proving a property of our novel step size, and verifying that there exists compact sets that bound the inner loop iterates. What follows is an example of how to use \Cref{result-gradient}, and provides a blueprint for analyzing other instances of our framework.

%%%% moved from appendix
We now prove that the subroutines and parameters in \cref{alg:novel-step-size} satisfy \cref{property-stepdirection-negative,property-stepdirection-propgrad,property-stepsize-upper,property-stepsize-lower,property-radius-nonnegative,property-radius-bounded,property-grad-lower,property-grad-upper,property-iter-max}.

\begin{lemma} \label{lemma:novel-step-size-satisfies-properties}
    \Cref{alg:novel-step-size} has subroutines $\mathrm{StepDirection()}$ and $\mathrm{StepSize()}{}$ that satisfy \cref{property-stepdirection-negative,property-stepdirection-propgrad,property-stepsize-upper,property-stepsize-lower} and parameters $\tau_{\iter, \exit}^k, \tau^k_{\gra,\low}, \tau^k_{\gra, \upp}, \tau_{\iter, \max}^k$ that satisfy \cref{property-radius-nonnegative,property-radius-bounded,property-grad-lower,property-grad-upper,property-iter-max}.
\end{lemma}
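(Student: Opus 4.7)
The plan is to verify each of the nine properties in turn, grouped by difficulty. Properties \ref{property-stepdirection-negative} and \ref{property-stepdirection-propgrad} follow immediately because $\mathrm{StepDirection}()$ returns $\gamma_j^k = -\dot F(\psi_j^k)$, so $\dot F(\psi_j^k)^\intercal \gamma_j^k = -\|\dot F(\psi_j^k)\|_2^2$ and $\|\gamma_j^k\|_2 = \|\dot F(\psi_j^k)\|_2$; one may take $\underline g(C) = \overline g(C) = 1$ for every compact $C$. Properties \ref{property-radius-nonnegative}, \ref{property-radius-bounded}, and \ref{property-iter-max} are also immediate because $\tau_{\iter,\exit}^k \equiv 10$ and $\tau_{\iter,\max}^k \equiv 100$ in \cref{alg:novel-step-size}; hence we take $\overline\tau_\exit(\theta) = 10$ and $\overline\tau_\max = 100$.

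For Properties \ref{property-stepsize-upper} and \ref{property-stepsize-lower}, I would exploit the explicit form of the novel step size. The additive $10^{-16}$ term gives $\alpha_j^k \geq 10^{-16}$ everywhere, so Property \ref{property-stepsize-lower} holds with $\underline\alpha(C) = 10^{-16}$. By inspection of \cref{alg:update}, $\hat L_j^k \geq 0$, so the second quantity inside the minimum is bounded above by $10^{16}$, giving $\alpha_j^k \leq 10^{16} + 10^{-16}$ independent of $C$; this yields Property \ref{property-stepsize-upper} with $\overline\alpha(C) = 10^{16} + 10^{-16}$.

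The bulk of the argument goes into Properties \ref{property-grad-lower} and \ref{property-grad-upper}, which I would establish by induction on $k$. At $k=0$, the initialization sets $\tau_{\gra,\low}^0 = \|\dot F(\theta_0)\|_2/\sqrt 2$ and $\tau_{\gra,\upp}^0 = \sqrt{10}\,\|\dot F(\theta_0)\|_2$, bracketing $\|\dot F(\theta_0)\|_2$ whenever $\dot F(\theta_0)\neq 0$. For the induction step, I would analyze the three cases in which $\theta_{k+1}$ is produced: (i) rejection, in which $\theta_{k+1} = \theta_k$ and both thresholds are carried over unchanged, so the bracketing $\tau_{\gra,\low}^{k+1} < \|\dot F(\theta_{k+1})\|_2 < \tau_{\gra,\upp}^{k+1}$ is inherited from the inductive hypothesis; (ii) acceptance triggered by a gradient-threshold violation, in which the thresholds are reset to $\|\dot F(\theta_{k+1})\|_2/\sqrt 2$ and $\sqrt{10}\|\dot F(\theta_{k+1})\|_2$, which obviously bracket the new gradient whenever $\dot F(\theta_{k+1}) \neq 0$; and (iii) acceptance triggered by the iterate-distance or iteration-count test with the gradient still in range, in which the thresholds are preserved and the inner-loop exit condition already guarantees $\|\dot F(\psi_j^k)\|_2 = \|\dot F(\theta_{k+1})\|_2 \in (\tau_{\gra,\low}^k, \tau_{\gra,\upp}^k) = (\tau_{\gra,\low}^{k+1}, \tau_{\gra,\upp}^{k+1})$. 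Strict positivity of $\tau_{\gra,\low}^{k+1}$ follows by tracking the chain of updates back to the most recent gradient-driven reset at an iterate with nonzero gradient (which exists as long as the outer loop has not terminated).

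The main obstacle, though more bookkeeping than genuine difficulty, is case (iii): here the thresholds are \emph{not} refreshed, so I must verify the bracketing by reading it off the inner-loop exit condition rather than from an explicit update. Once this is observed, the four update branches in \cref{alg:novel-step-size} all fall into one of the three cases above and the induction closes.
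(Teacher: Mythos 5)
Your proposal is correct and follows essentially the same route as the paper's proof: negative gradient gives $\underline g(C)=\overline g(C)=1$, the additive and denominator $10^{-16}$ terms give $\underline\alpha(C)=10^{-16}$ and $\overline\alpha(C)=10^{16}+10^{-16}$, the constants $10$ and $100$ handle the iterate-exit and iteration-max thresholds, and the gradient thresholds either bracket the new gradient by explicit reset or are inherited because the accepted iterate's gradient lies strictly inside the unrefreshed interval. Your explicit induction on $k$ merely spells out what the paper states in one sentence, so the two arguments are the same in substance.
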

\begin{proof}
    Take any compact set $C \subset \mathbb{R}^n$. In \cref{alg:novel-step-size}, $\mathrm{StepDirection(\psi)}$ returns the negative gradient direction at $\psi$, therefore satisfies \cref{property-stepdirection-negative,property-stepdirection-propgrad} with $\underbar{$g$}(C) = \overline{g}(C) = 1$. Next to prove \cref{property-stepsize-upper,property-stepsize-lower}, recall we must show that $\forall \psi \in C$, any $\alpha$ computed by $\mathrm{StepSize()}{}$ at $\psi$ must satisfy $\underbar{$\alpha$}(C)  < \alpha < \overline{\alpha}(C)$ for some constants $\overline{\alpha}(C), \underbar{$\alpha$}(C) > 0$. From \cref{alg:novel-step-size}, $\mathrm{StepSize()}{}$ returns for any $\psi$
    \begin{equation}
        \alpha = \min\bigpar{ \frac{(\tau^k_{\gra,\low})^2}{\inlinenorm{\dot{F}(\psi)}_2^3 + .5\inlinenorm{\dot{F}(\psi)}_2^2 \hat{L}_j^k + 10^{-16 }}, \frac{1}{ \inlinenorm{\dot{F}(\psi)}_2 + .5 \hat{L}_j^k + 10^{-16} }} + 10^{-16},
    \end{equation}
    where $\hat{L}_j^k \ge 0$ and $(\tau^k_{\gra,\low})^2 \ge 0$. Since everything inside the minimum is non-negative, $\underbar{$\alpha$}(C) \geq 10^{-16}$. To upper bound $\alpha$,
    \begin{equation}
        \alpha \leq \frac{1}{ \inlinenorm{\dot{F}(\psi)}_2 + .5 \hat{L}_j^k + 10^{-16} } + 10^{-16} \le \frac{1}{10^{-16}} + 10^{-16} = \overline{\alpha}(C).
    \end{equation}
    To prove our other parameters satisfy the required properties, note that $\tau_{\iter, \exit}^k = 10$, which is non-negative and bounded above (\cref{property-radius-nonnegative,property-radius-bounded}); and $\tau_{\iter, \max}^k = 100$, which is greater than 1 and bounded above (\cref{property-iter-max}). Lastly, in \cref{alg:novel-step-size}, we iteratively define the gradient upper and lower bound whenever the gradient thresholds are violated as
    \begin{equation}
        \begin{aligned}
            0 < \tau_{\gra,\low}^k = \inlinenorm{\dot{F}(\theta_k)}_2/\sqrt{2} < \inlinenorm{\dot{F}(\theta_k)}_2, \text{\quad}
            \inlinenorm{\dot{F}(\theta_k)}_2 < \tau_{\gra,\upp}^k = \sqrt{10}\inlinenorm{\dot{F}(\theta_k)}_2.
        \end{aligned}
    \end{equation}
    Whenever these parameters are not updated, the gradient is strictly within the previous interval, which satisfy \cref{property-grad-lower,property-grad-upper}.
\end{proof}
%%%%

\begin{theorem} \label{thm:convergence-theorem-novel-step}
    Suppose \cref{problem} satisfies \cref{as-bounded below,as-loc-lip-cont}, and is solved using \cref{alg:novel-step-size}. Then, either there exists a $k+1 \in \mathbb{N}$ such that $F(\theta_k) \le F(\theta_0)$ and $\dot{F}(\theta_k)=0$, or $\{F(\theta_k) : k+1 \in \mathbb{N}\}$ is bounded and has a strictly decreasing subsequence. In the latter case, we have the following cases.
    \begin{remunerate}
        \item If $\{\theta_k\}$ are bounded, then $\liminf_{k\to\infty}\inlinenorm{ \dot F(\theta_k) }_2 = 0$
        \item If $\{\theta_k\}$ are unbounded and for some $w_1 \geq 0$ and $w_2 \in [0,2]$, $\exists c_0,c_1, c_2 \geq 0$ such that $\mathcal{L}(C_u') \leq c_0 + c_1 (F(\theta_{\ell_{g_{u-1}}}) - F_{l.b.})^{w_1} + c_2 \inlinenorm{ \dot F(\theta_{\ell_{g_u-1}})}_2^{w_2}$, where $C_u' = \cup_{k = \ell_{g_{u-1}}}^{\ell_{g_u}-1} C_k$, then $\liminf_{k \to \infty} \inlinenorm{ \dot F(\theta_k)}_2 = 0$.
    \end{remunerate}
\end{theorem}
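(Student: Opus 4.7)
The plan is to specialize \cref{result-gradient} to \cref{alg:novel-step-size}. \cref{lemma:novel-step-size-satisfies-properties} has already shown that all subroutine and parameter properties required to invoke \cref{result-gradient} hold, so the trichotomy of that theorem applies. Finite termination yields an iterate $\theta_k$ with $F(\theta_k) \le F(\theta_0)$ and $\dot F(\theta_k) = 0$, matching the first bullet of the claim. Otherwise, the iterate sequence is infinite, $\{F(\theta_k)\}$ is bounded with a strictly decreasing subsequence, and what remains is to establish $\liminf_{k\to\infty} \inlinenorm{\dot F(\theta_k)}_2 = 0$ in the two scenarios stated.

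To close the gradient argument, I would fix the compact sets $\{C_k\}$ constructed in the proof of \cref{result-iterates-in-compactsets}. For this specific instance, \cref{lemma:novel-step-size-satisfies-properties} delivers constants $\underline g(C) = \overline g(C) = 1$, $\underline \alpha(C) \ge 10^{-16}$, and $\overline \alpha(C) \le 10^{16} + 10^{-16}$ that do not depend on the compact set $C$. Combined with the fixed exit threshold $\tau_{\iter,\exit}^k = 10$ and the invariant $\delta_k \le 1$ (which follows from the initialization $\delta_0 = 1$ and the update rule $\min\{1.5\delta_k, 1\}$), these uniform bounds are what trivialize the infimum conditions appearing in parts 2b and 2c of \cref{result-gradient}.

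If $\{\theta_k\}$ is bounded, say $\{\theta_k\} \subset K$ for some compact $K \subset \mathbb{R}^n$, then the ball $\mathcal{B}(\theta_k)$ used in the construction of $C_k$ has radius $\overline \tau_{\exit}(\theta_k) = 10$, so $\bigcup_{\theta \in K} \mathcal{B}(\theta)$ is compact, and continuity of $\dot F$ from \cref{as-loc-lip-cont} bounds $\mathcal{G}(\theta_k) = \sup_{\psi \in \mathcal{B}(\theta_k)} \inlinenorm{\dot F(\psi)}_2$ uniformly in $k$. Plugging this back into the definition of $C_k$ together with $\delta_k \le 1$ and the uniform $\overline \alpha, \overline g$ shows $\bigcup_k C_k$ is bounded, and part 2a of \cref{result-gradient} directly yields $\liminf_{k\to\infty} \inlinenorm{\dot F(\theta_k)}_2 = 0$.

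If $\{\theta_k\}$ is unbounded, then $\bigcup_k C_k$ is necessarily unbounded, and I would split on the behavior of $\{\delta_k\}$. When $\{\delta_k\}$ is bounded from below, part 2b of \cref{result-gradient} applies since $\underline \alpha(C_k) \underline g(C_k) \ge 10^{-16}$ for every $k$. When $\liminf_{k\to\infty} \delta_k = 0$, part 2c applies: the uniform constants give $\underline g(C_u')^2 \underline \alpha(C_u') / (\overline g(C_u')^2 \overline \alpha(C_u')) \ge 10^{-16} / (10^{16} + 10^{-16}) > 0$, and the Lipschitz-rank growth bound on $\mathcal{L}(C_u')$ assumed in the theorem statement is exactly the hypothesis required by part 2c. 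Thus in either sub-case $\liminf_{k\to\infty} \inlinenorm{\dot F(\theta_k)}_2 = 0$. The only genuinely non-trivial step is the compactness argument for the bounded-iterate case; everything else collapses to verifying that specific numerical constants are strictly positive, so I do not anticipate a substantial obstacle.
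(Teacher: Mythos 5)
Your proposal is correct and follows essentially the same route as the paper: invoke \cref{lemma:novel-step-size-satisfies-properties} to apply \cref{result-gradient}, show $\cup_k C_k$ is bounded when $\{\theta_k\}$ is bounded by exploiting the fixed exit radius $10$, $\delta_k \le 1$, and the uniform step-size cap to bound the overshoot of the triggering iterate, and then dispatch the unbounded case by noting that $\underline g = \overline g = 1$ and the step-size bounds from the lemma are absolute constants, which trivializes the liminf hypotheses of parts 2b and 2c. The only cosmetic difference is that the paper writes out the compact sets explicitly as $C_k = \{\psi : \inlinenorm{\psi - \theta_k}_2 \le 11 + 10^{-16}\mathcal{G}(\theta_k,10)\}$ rather than citing the generic construction of \cref{result-iterates-in-compactsets}, but the content is the same.
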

\begin{proof}
    By \cref{lemma:novel-step-size-satisfies-properties} and \cref{result-objective-function}, either the procedure terminates in finite time or produces an infinite number of iterates with bounded objective function values (containing a strictly decreasing subsequence). We study the sub-cases of this latter case now. When $\{\theta_k\}$ is bounded, we must show that there exists a compact $C$ that contains $\{\theta_k\}$ and all iterates $\{\psi_1^k,...,\psi_{j_k}^k\}$. To this end, by our parameter initializations and construction of the triggering events, for any $k+1 \in \mathbb{N}$, define $\mathcal{B}(\theta_k, 10) = \{\psi : \inlinenorm{\psi-\theta_k}_2 \le 10\}$, then $\{\psi_1^k,...,\psi_{j_k-1}^k\} \subset \mathcal{B}(\theta_k, 10)$. To bound the distance between $\psi_{j_k}^k$ and $\psi_{j_k-1}^k$, define $\mathcal{G}(\theta, R) = \sup_{\psi \in \mathcal{B}(\theta, R)} \inlinenorm{\dot{F}(\psi)}_2$, then using that $\delta_k \le 1$ and $\alpha_j^k \le 1/(\inlinenorm{\dot{F}(\psi_{j_k-1}^k)}_2 + .5 \hat{L}_{j_k-1}^k + 10^{-16}) + 10^{-16}$, we obtain
    \begin{equation}
        \norm{\psi_{j_k}^k - \psi_{j_k-1}^k}_2 = \norm{\delta_k \alpha_{j}^k \dot{F}(\psi_{j_k-1}^k)}_2 \le 1 + 10^{-16} \norm{\dot{F}(\psi_{j_k-1}^k)}_2 \le 1 + 10^{-16}\mathcal{G}(\theta_k, 10).
    \end{equation}
    Therefore, $\norm{\theta_k - \psi_{j_k}^k}_2 \le 11 + 10^{-16}\mathcal{G}(\theta_k, 10)$.
    % \begin{equation}
    %     \norm{\theta_k - \psi_{j_k}^k}_2 \leq \norm{\theta_k - \psi_{j_k-1}^k}_2 + \norm{\psi_{j_k-1}^k - \psi_{j_k}^k}_2 \le 11 + 10^{-16}\mathcal{G}(\theta_k, 10).
    % \end{equation}
    Define $C_k = \{\psi : \inlinenorm{\psi-\theta_k}_2 \le 11 + 10^{-16}\mathcal{G}(\theta_k, 10)\}$, then for all $k+1 \in \mathbb{N}$, $\{\psi_1^k,...,\psi_{j_k}^k\} \subset C_k$. By our assumptions, $\exists R > 0$ such that for all $k+1 \in\mathbb{N}, ~\inlinenorm{\theta_k}_2 \leq R$. Therefore, for all $k+1 \in\mathbb{N}$ it must be the case that $\mathcal{G}(\theta_k, 10) \le \mathcal{G}(0, 10+R)$. Using this fact, we conclude
    \begin{equation}
        \bigcup_{k+1 \in \mathbb{N}} C_k \subseteq \{\psi:\inlinenorm{\psi}_2 \le 11 + 10^{-16}\mathcal{G}(0, 10+R) \} \eqdef C.
    \end{equation}
    Therefore, by part 2a of \cref{result-gradient}, $\liminf_{k\to\infty}\inlinenorm{ \dot F(\theta_k) }_2 = 0$. 
    
    Now consider when $\{\theta_k\}$ is unbounded, then $\cup_{k\in\mathbb N} C_k$ is unbounded. Note, either $\lbrace \delta_k : k+1 \in \mathbb{N} \rbrace$ is bounded below, or there exists a subsequence converging to zero (i.e. $\liminf_{k\to\infty} \delta_k = 0$). Therefore, to use part either 2b or 2c of \cref{result-gradient}, we verify 
    \begin{equation} \label{eq-inf-for-novel-step}
         \liminf_{k\to\infty} \underline g (C_k) \underline \alpha (C_k) > 0 \text{\quad and \quad}
         \liminf_{u\to\infty} \frac{\underline g (C_u')^2 \underline \alpha (C_u')}{\overline g(C_u')^2 \overline \alpha (C_u')} > 0.
    \end{equation}
    This follows since for any compact set $C$, the negative gradient directions satisfy \cref{property-stepdirection-negative,property-stepdirection-propgrad} with $\underline g (C) = \overline g(C) = 1$, and by \cref{lemma:novel-step-size-satisfies-properties}, $\underline\alpha(C_k)$ and $\overline \alpha(C_k)$ are bounded by non-zero constants.
    % Furthermore, for any compact set $C$ by \cref{lemma:novel-step-size-satisfies-properties}, $\underline \alpha(C) > 10^{-16}$ and $\underline \alpha (C)/\overline \alpha (C) \geq 10^{-16}/(10^{16} + 10^{-16})$.
    % \begin{equation}
    %     \frac{\underline \alpha (C)}{\overline \alpha (C)} \geq \frac{10^{-16}}{10^{16} + 10^{-16}} > 0.
    % \end{equation}
    %Therefore, we obtain the bounds in \cref{eq-inf-for-novel-step}, and the result follows by \cref{result-gradient}.
\end{proof}

In summary, we have just presented a novel step size procedure utilizing negative gradient directions within our framework in \cref{alg:novel-step-size}, and shown how to apply the theory developed for our general method to its specific subroutines and parameters. We now numerically show that this procedure is very competitive against a range of first and second order methods on CUTEst problems (\cref{sec:Results-cutest}), and has superior performance when comparing on several data science problems (\cref{sec:Results-gee}).

\section{Numerical Experiments on CUTEst Problems}  \label{sec:Results-cutest}
We now present a numerical experiment using our methodology (see \cref{alg:novel-step-size}) on a set of unconstrained optimization problems from the CUTEst library \cite{gould2015}. The details of the experiment are in \cref{table:cutest-exper}.

\begingroup
    \setlength{\tabcolsep}{10pt} % Default value: 6pt
    \renewcommand{\arraystretch}{1.5}
    \begin{table}[!ht]
        \centering
        \footnotesize
        \caption{CUTEst numerical experiment overview.}
        \label{table:cutest-exper}
        \begin{tabular}{p{4cm} p{9cm}} \toprule
            \textbf{Problems} & All\cref{footnote-alg-exception} unconstrained CUTEst problems with objective, gradient, and Hessian information on the smallest dimension setting (117 problems). \\
            \textbf{Algorithms} & Gradient Descent with Armijo and Wolfe line search \cite[Chapter 3]{nocedal2006numerical}, Cubic Regularization Newton's (CRN, \cite{nesterov2006cubic}), Adaptive Cubic Regularization (ACR, \cite{cartis2011cubic1}), Dynamic Method (DMC, \cite{curtis2018exploiting}), and Our method (\cref{alg:novel-step-size}).  \\
            \textbf{Termination} & $20{,}000$ iterations, or gradient tolerance of $10^{-5}$. \\
            \textbf{Data Recorded} & Number of objective and gradient evaluations, and CPU Time. \\\bottomrule
        \end{tabular}
    \end{table}
\endgroup

% All\footnotemark[3] unconstrained CUTEst \newline problems with objective, \newline gradient, and Hessian \newline information on the \newline smallest dimension setting \newline (117 problems). 
% & Gradient Descent with Armijo and \newline Wolfe line search \cite[Chapter 3]{nocedal2006numerical}, \newline Cubic Regularization Newton's \newline (CRN, \cite{nesterov2006cubic}), \newline Adaptive Cubic Regularization \newline (ACR, \cite{cartis2011cubic1}), \newline Dynamic Method (DMC, see \cite{curtis2018exploiting}), \newline Our method (see \cref{alg:novel-step-size}) 
% & $20000$ iterations, \newline or gradient tolerance \newline  of $1^{-5}$. 
% & Number of \newline objective and \newline gradient \newline evaluations, \newline and CPU Time.  \\\bottomrule

To summarize the information in \cref{table:cutest-exper}, we run three first order methods and three second order methods for $20{,}000$ iterations, or until a gradient tolerance of $10^{-5}$ is reached, on all unconstrained problems that have objective, gradient, and Hessian information.\footnote{\label{footnote-alg-exception}Except for SCURLY10, SCULRY20, SCULRY30, and TESTQUAD as the first three gave initialization errors, and the last due to computational time constraints.}
For algorithms that have an optimization sub-problem to compute the next iterate, we utilize a Krylov-based trust region method accessed through SciPy \cite[see][]{virtanen2020scipy,gould1999trustregion}, with a limit of $500$ iterations, or until a specific gradient tolerance is reached \cite[see][\S 7 for such tolerances]{cartis2011cubic1}.\footnote{Necessary for CRN, ACR}
All methods that have some type of inner loop are limited to $100$ inner loop iterations.\footnote{Necessary for Line search, CRN, DMC, Ours} 
Finally, to compare these methods we record the number of objective, gradient, and Hessian evaluations, as well as CPU time; in our analysis we concentrate on comparing these quantities only on problems where \emph{all} algorithms reached an iterate with gradient $10^{-5}$ (i.e., a successful termination) leaving a total of 76 problems in the analysis.

Before presenting the results, we remark that the list of algorithms in \cref{table:cutest-exper} (as well as in \cref{sec:Results-gee}) is missing a class of methods that have recently gained attention called Objective Function Free Optimization algorithms \cite{gratton2022firstorderOFFO,gratton2023OFFORegularized,gratton2023OFFOSecondOrderOpt,gratton2022ParametricComplexityAnalysis,gratton2023ComplexityOfFirstOrderOFFO}. We leave these algorithms out of our comparison, because many assumptions required to guarantee convergence are not satisfied by problems in data science; specifically, these methods require global Lipschitz continuity of the gradient, and that $\forall \theta \in \mathbb{R}^n, ~\inlinenorm{F(\theta)}_{\infty} < M$ for some $M \in \mathbb{R}$ \cite[see][AS.2 and AS.3]{gratton2022firstorderOFFO}, which are both unrealistic \cite[see][\S 5]{gratton2022firstorderOFFO}.

Moving to the results, we consider the relative change of objective and gradient evaluations between our method and the comparing algorithms \cite[as was done in][]{curtis2018exploiting}, then compare the CPU Time to examine any computational overhead besides explicit oracle evaluations.

% Finally, the analysis will be structured as follows: we present graphs that illustrate the relative change of objective and gradient evaluations between our method and the cosmparing algorithms to get an idea of computational cost, then supplement this information with a graph comparing CPU time to uncover other additional expenses paid throughout the algorithm. We split this analysis into first and second order methods, and compare against first order algorithm next.

\paragraph{Comparison with First Order Methods} The relative change of objective and gradient evaluations between our method and the first order methods are presented in \cref{fig:first-order-rel-change-total}. As illustrated in \cref{fig:first-order-rel-change-total}, our method is extremely competitive in these terms, as it uses fewer combined objective and gradient evaluations on all 76 problems except for 5. When comparing just objective evaluations in \cref{fig:first-order-rel-change-func-grad}, our method uses evaluations economically owing to the triggering events for the inner loop, whereas line search methods rely heavily on repeated objective evaluations. On the other hand, our method requires more gradient evaluations on many problems, as some inner loop iterations are needed to calibrate the scaling factor.

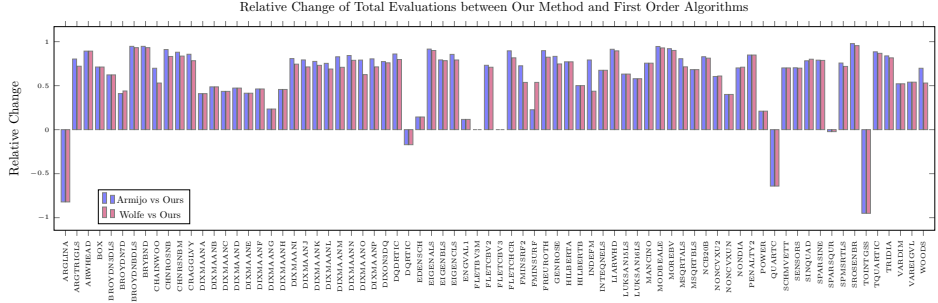
\begin{figure}
    \centering
    \begin{tikzpicture}[scale = .5]

    \pgfplotstableread[col sep=comma]{data/cutest/all_rel_change_total_calls.csv}\reldata
    \pgfplotsset{every tick label/.append style={font=\tiny}}

    \begin{axis}[ybar = .5pt, 
        xtick = data,
        xticklabels from table={\reldata}{problem name},
        xticklabel style={rotate=90},
        enlarge x limits={abs=1},
        bar width = 3pt,
        width=25cm,height=7cm,
        ylabel = Relative Change,
        title = Relative Change of Total Evaluations between Our Method and First Order Algorithms,
        legend style={at={(0.05,0.2)},anchor=north west,nodes={scale=0.75, transform shape}}]

        \addplot[draw=black, fill=blue, semitransparent] table [x expr = \coordindex, y = armijo_rel_change] {\reldata};
        \addplot[draw=black, fill=purple, semitransparent] table[x expr = \coordindex, y = wolfe_rel_change] {\reldata};
        \legend{Armijo vs Ours, Wolfe vs Ours}
    \end{axis}
\end{tikzpicture} 
    \caption{Relative change of total (objective plus gradient) evaluations between Our Method and Gradient Descent with Armijo and Wolfe line search. Negative values indicate ours did worse, positive values indicate ours did better. \label{fig:first-order-rel-change-total}}
\end{figure}

\begin{figure}
    \centering
    \begin{tikzpicture}[scale = .5]

    \pgfplotstableread[col sep=comma]{data/cutest/all_rel_change_gradient_calls.csv}\reldata
    \pgfplotsset{every tick label/.append style={font=\tiny}}

    \begin{axis}[ybar = .5pt, 
        xtick = data,
        xticklabels from table={\reldata}{problem name},
        xticklabel style={rotate=90},
        enlarge x limits={abs=1},
        bar width = 3pt,
        width=25cm,height=7cm,
        ylabel = Relative Change,
        title = Relative Change of Gradient Evaluations between Our Method and First Order Algorithms,
        legend style={at={(0.05,0.2)},anchor=north west,nodes={scale=0.75, transform shape}}]

        \addplot[draw=black, fill=blue, semitransparent] table [x expr = \coordindex, y = armijo_rel_change] {\reldata};
        \addplot[draw=black, fill=purple, semitransparent] table[x expr = \coordindex, y = wolfe_rel_change] {\reldata};
        \legend{Armijo vs Ours, Wolfe vs Ours}
    \end{axis}
\end{tikzpicture}
    \begin{tikzpicture}[scale = .5]

    \pgfplotstableread[col sep=comma]{data/cutest/all_rel_change_function_calls.csv}\reldata
    \pgfplotsset{every tick label/.append style={font=\tiny}}

    \begin{axis}[ybar = .5pt, 
        xtick = data,
        xticklabels from table={\reldata}{problem name},
        xticklabel style={rotate=90},
        enlarge x limits={abs=1},
        bar width = 3pt,
        width=25cm,height=7cm,
        ylabel = Relative Change,
        title = Relative Change of Objective Evaluations between Our Method and First Order Algorithms,
        legend style={at={(0.05,0.2)},anchor=north west,nodes={scale=0.75, transform shape}}]

        \addplot[draw=black, fill=blue, semitransparent] table [x expr = \coordindex, y = armijo_rel_change] {\reldata};
        \addplot[draw=black, fill=purple, semitransparent] table[x expr = \coordindex, y = wolfe_rel_change] {\reldata};
        \legend{Armijo vs Ours, Wolfe vs Ours}
    \end{axis}
\end{tikzpicture}
    \caption{Relative change of objective or gradient between Our Method and Gradient Descent with Armijo and Wolfe line search. Negative values indicate ours did worse, positive values indicate ours did better. \label{fig:first-order-rel-change-func-grad}}
\end{figure}
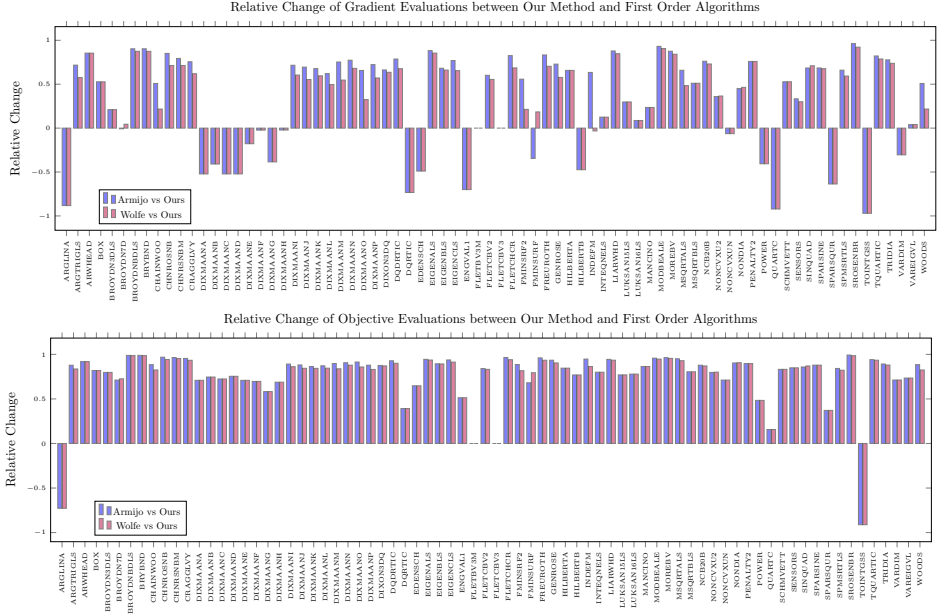

\begin{figure}
    \centering
    \begin{subfigure}{0.4\textwidth}
        \begin{tikzpicture}[x=1pt,y=1pt,scale=.6]
	\begin{axis}[xmode=log,ymode=log,
		xlabel = Our CPU Time,
		ylabel = Competitors CPU Time,
		title = CPU Time Comparison with First Order Methods,
		xmin=.00001,xmax=1,ymin=.00001,
		ymax=1,width=10cm,height=9cm,
		legend style={at={(0.02,0.98)},anchor=north west,nodes={scale=0.75, transform shape}}]
		
		% line
		\addplot [color=black, line width=1.0pt, forget plot]
		  table[row sep=crcr]{%
		  .00001  .00001\\
		1 1\\
		};

		% armijo
		\addplot[only marks, color = blue, mark options={scale=.75}] table [x=GD Triggers, y = armijo, col sep=comma] {data/cutest/all_cpu_lost.csv};
		\addlegendentry[]{Ours did better vs Armijo}
		\addplot[only marks, color = red, mark options={scale=.75}] table [x=GD Triggers, y = armijo, col sep=comma] {data/cutest/all_cpu_won.csv};
		\addlegendentry[]{Armijo did better vs Ours}
	
		% wolfe
		\addplot[only marks, color = blue, mark = square,mark options={scale=.75}] table [x=GD Triggers, y = wolfe, col sep=comma] {data/cutest/all_cpu_lost.csv};
		\addlegendentry[]{Ours did better vs Wolfe}
		\addplot[only marks, color = red, mark = square,mark options={scale=.75}] table [x=GD Triggers, y = wolfe, col sep=comma] {data/cutest/all_cpu_won.csv};
		\addlegendentry[]{Wolfe did better vs Ours}
	\end{axis}
\end{tikzpicture}
        \caption{First order methods}
        \label{fig:first-order-cpu-time-comparison}
    \end{subfigure}
    \begin{subfigure}{0.4\textwidth}
        % Created by tikzDevice version 0.12.5 on 2023-08-19 14:51:26
% !TEX encoding = UTF-8 Unicode
\begin{tikzpicture}[x=1pt,y=1pt,scale=.6]
    \begin{axis}[xmode=log,ymode=log,
		xlabel = Our CPU Time,
		ylabel = Competitors CPU Time,
		title = CPU Time Comparison with Second Order Methods,
		xmin=.00001,xmax=1,ymin=.00001,
		ymax=1000,width=10cm,height=9cm,
        legend style={at={(0.02,0.98)},anchor=north west,nodes={scale=0.75, transform shape}}]
        
        % line
        \addplot [color=black, line width=1.0pt, forget plot]
		  table[row sep=crcr]{%
		  .00001  .00001\\
		1 1\\
		};
        
        % ACR
        \addplot[only marks, color = blue, mark options={scale=.75}] table [x=GD Triggers, y = Adaptive Cubic Regularization, col sep=comma] {data/cutest/all_cpu_lost.csv};
		\addlegendentry[]{Ours did better vs ACR}
		\addplot[only marks, color = red, mark options={scale=.75}] table [x=GD Triggers, y = Adaptive Cubic Regularization, col sep=comma] {data/cutest/all_cpu_won.csv};
		\addlegendentry[]{ACR did better vs Ours}

        % DMC
        \addplot[only marks, color = blue, mark = square,mark options={scale=.75}] table [x=GD Triggers, y = Cubic Regularized Newton, col sep=comma] {data/cutest/all_cpu_lost.csv};
		\addlegendentry[]{Ours did better vs CRN}
		\addplot[only marks, color = red, mark = square,mark options={scale=.75}] table [x=GD Triggers, y = Cubic Regularized Newton, col sep=comma] {data/cutest/all_cpu_won.csv};
		\addlegendentry[]{CRN did better vs Ours}

        % CRN
        \addplot[only marks, color = blue, mark = star,mark options={scale=.75}] table [x=GD Triggers, y = Dynamic Method: Curtis and Robinson, col sep=comma] {data/cutest/all_cpu_lost.csv};
        \addlegendentry[]{Ours did better vs DM}
        \addplot[only marks, color = red, mark = star, mark options={scale=.75}] table [x=GD Triggers, y = Dynamic Method: Curtis and Robinson, col sep=comma] {data/cutest/all_cpu_won.csv};
        \addlegendentry[]{DM did better vs Ours}
    \end{axis}
\end{tikzpicture}
        \caption{Second order methods}
        \label{fig:second-order-cpu-time-comparison}
    \end{subfigure}
    \caption{Comparison of CPU Time between our method and other algorithms on CUTEst problems split between first order and second order methods.}
\end{figure}
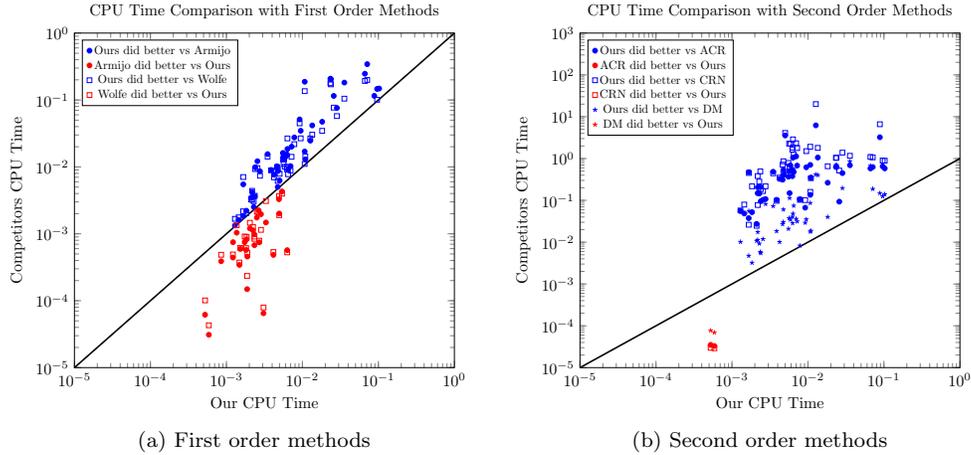

When comparing CPU time between our method and other first order methods, it can be seen from \cref{fig:first-order-cpu-time-comparison} that our method compares favorably against line search algorithms on only slightly larger than half the number of problems. While \cref{fig:first-order-rel-change-total,fig:first-order-rel-change-func-grad} might lead one believe that our method would be outright faster on all problems, interestingly there does seem to be some non-trivial computational aspects when checking triggering events in the inner loop or the extra gradient evaluations. 

\paragraph{Comparison with Second Order Methods} We present the same relative change graphics in \cref{fig:second-order-rel-change-total} and \cref{fig:second-order-rel-change-grad-func} to compare against second order methods. As can be seen from \cref{fig:second-order-rel-change-total}, cubic regularized Newton's method and the adaptive cubic regularization method require substantially fewer objective and gradient evaluations compared to our first-order algorithm. The same observation holds when comparing relative change of just the number of objective or the gradient evaluations separately against these two (see \cref{fig:second-order-rel-change-grad-func}). Such results are not surprising as second-order methods make use of Hessian information in order to find a minimizer \cite[see][]{nesterov2006cubic}, which is not used in our example procedure. When comparing CPU times, we see the cost of using Hessian information and computing solutions to sub-problems (see \cref{fig:second-order-cpu-time-comparison}), as our method does better on almost all problems. 

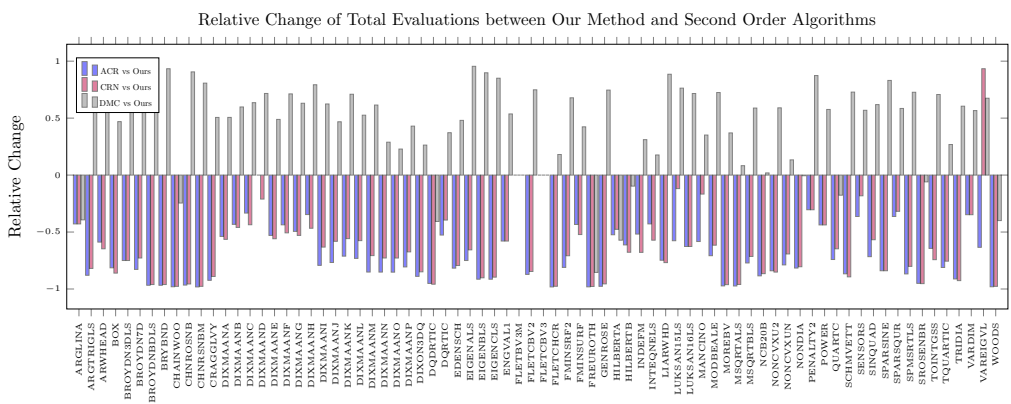
\begin{figure}
    \centering
    \begin{tikzpicture}[scale = .65]

    \pgfplotstableread[col sep=comma]{data/cutest/all_rel_change_total_calls.csv}\reldata
    \pgfplotsset{every tick label/.append style={font=\tiny}}

    %acr_rel_change,crn_rel_change,dmc_rel_change

    \begin{axis}[ybar = .25pt, 
        x = .25cm,
        xtick = data,
        xticklabels from table={\reldata}{problem name},
        xticklabel style={rotate=90},
        enlarge x limits={abs=1},
        bar width = 2pt,
        width=25cm,height=7cm,
        ylabel = Relative Change,
        title = Relative Change of Total Evaluations between Our Method and Second Order Algorithms,
        legend style={at={(0.01,0.95)},anchor=north west,nodes={scale=0.5, transform shape}}]

        \addplot[draw=black, fill=blue, semitransparent] table [x expr = \coordindex, y = acr_rel_change] {\reldata};
        \addplot[draw=black, fill=purple, semitransparent] table[x expr = \coordindex, y = crn_rel_change] {\reldata};
        \addplot[draw=black, fill=gray, semitransparent] table[x expr = \coordindex, y = dmc_rel_change] {\reldata};
        \legend{ACR vs Ours, CRN vs Ours, DMC vs Ours}
    \end{axis}
\end{tikzpicture}
    \caption{Relative change of total (objective plus gradient) evaluations between our method and second order algorithms. Negative values indicate ours did worse, positive values indicate ours did better.\label{fig:second-order-rel-change-total}}
\end{figure}

\begin{figure}
    \centering
    \begin{tikzpicture}[scale = .65]

    \pgfplotstableread[col sep=comma]{data/cutest/all_rel_change_gradient_calls.csv}\reldata
    \pgfplotsset{every tick label/.append style={font=\tiny}}

    %acr_rel_change,crn_rel_change,dmc_rel_change

    \begin{axis}[ybar = .25pt, 
        x = .25cm,
        xtick = data,
        xticklabels from table={\reldata}{problem name},
        xticklabel style={rotate=90},
        enlarge x limits={abs=1},
        bar width = 2pt,
        width=25cm,height=7cm,
        ylabel = Relative Change,
        title = Relative Change of Gradient Evaluations between Our Method and Second Order Algorithms,
        legend style={at={(0.01,0.95)},anchor=north west,nodes={scale=0.5, transform shape}}]

        \addplot[draw=black, fill=blue, semitransparent] table [x expr = \coordindex, y = acr_rel_change] {\reldata};
        \addplot[draw=black, fill=purple, semitransparent] table[x expr = \coordindex, y = crn_rel_change] {\reldata};
        \addplot[draw=black, fill=gray, semitransparent] table[x expr = \coordindex, y = dmc_rel_change] {\reldata};
        \legend{ACR vs Ours, CRN vs Ours, DMC vs Ours}
    \end{axis}
\end{tikzpicture}
    \begin{tikzpicture}[scale = .65]

    \pgfplotstableread[col sep=comma]{data/cutest/all_rel_change_function_calls.csv}\reldata
    \pgfplotsset{every tick label/.append style={font=\tiny}}

    %acr_rel_change,crn_rel_change,dmc_rel_change

    \begin{axis}[ybar = .25pt, 
        x = .25cm,
        xtick = data,
        xticklabels from table={\reldata}{problem name},
        xticklabel style={rotate=90},
        enlarge x limits={abs=1},
        bar width = 2pt,
        width=25cm,height=7cm,
        ylabel = Relative Change,
        title = Relative Change of Objective Evaluations between Our Method and Second Order Algorithms,
        legend style={at={(0.01,0.95)},anchor=north west,nodes={scale=0.5, transform shape}}]

        \addplot[draw=black, fill=blue, semitransparent] table [x expr = \coordindex, y = acr_rel_change] {\reldata};
        \addplot[draw=black, fill=purple, semitransparent] table[x expr = \coordindex, y = crn_rel_change] {\reldata};
        \addplot[draw=black, fill=gray, semitransparent] table[x expr = \coordindex, y = dmc_rel_change] {\reldata};
        \legend{ACR vs Ours, CRN vs Ours, DMC vs Ours}
    \end{axis}
\end{tikzpicture}
    \caption{Relative change between our method and second order algorithms of gradient and objective evaluations, respectively. Negative values indicate ours did worse, positive values indicate ours did better.\label{fig:second-order-rel-change-grad-func}}
\end{figure}
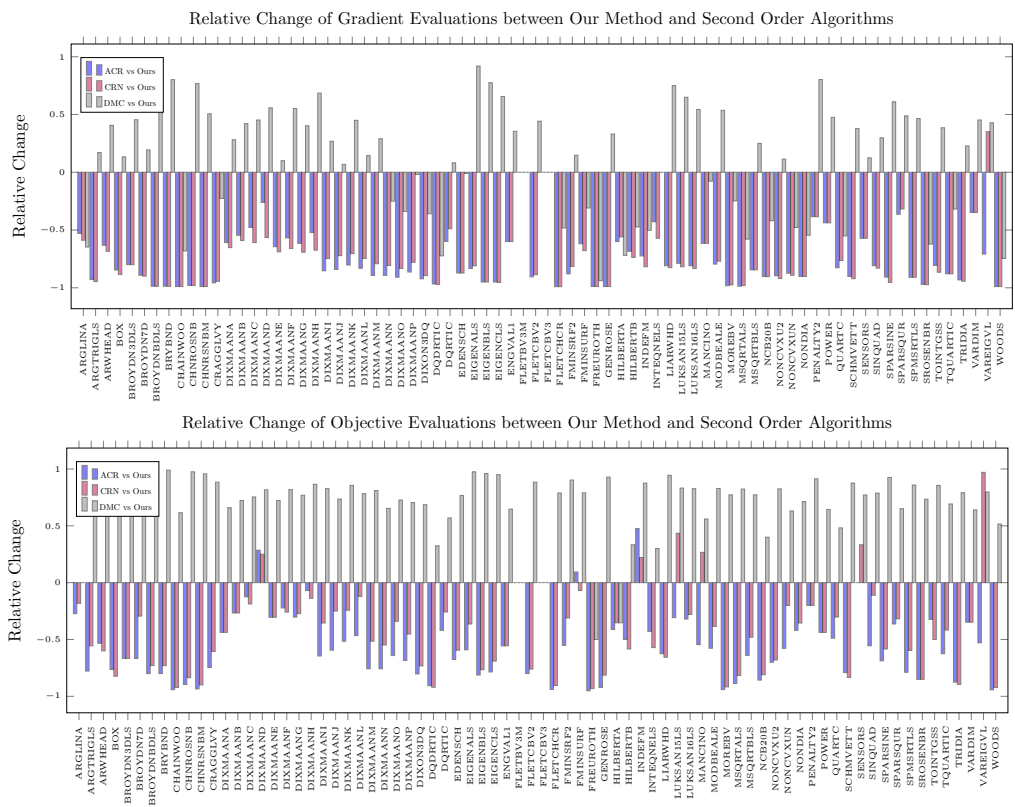

\paragraph{In Summary} When comparing our method against this set of first and second order methods on a set of CUTEst problems, we see that our method is extremely competitive. In comparison to first order methods, our novel procedure economically uses objective evaluations at the price of a small increase in gradient evaluations; while, in comparison to second order methods, our algorithm has superior CPU time performance even when taking more objective and gradient evaluations. This makes our method a competitive and practical alternative to traditional line search techniques and second order methods in addressing general unconstrained optimization problems, and with the possible exception of Armijo line search, is the only algorithm that enjoys a sufficiently general theory for problems in data science. 

\section{Numerical Experiments on GEE Problems} \label{sec:Results-gee}
One important characteristic of our general framework (see \cref{alg-general-algorithm}) that was showcased in \cref{sec:Results-cutest} is the economical use of objective function evaluations. This makes our method the preferred candidate for data science applications where accurate objective evaluations are needed, yet are expensive to obtain, and gradient evaluations are inexpensive \cite[see][]{berahas2021global,gratton2023ComplexityOfFirstOrderOFFO}.

While the above scenario might seem atypical for standard optimization problems, an important data science application with these characteristics is estimating parameters of generalized estimating equations (GEEs). This method is used when data exhibits complex grouped structure, as in repeated measurements in biomedical studies \cite[see][Chapter 3]{lipsitz2008geelongitudinal}. In its most basic form, this problem involves computing a root of
\begin{equation} \label{eq:general-gee}
    \dot{F}(\theta) = -\sum_{i=1}^n D_i(\theta)^\intercal V_i(\theta)^{-1}(Y_i-\mu_i(\theta)),
\end{equation} 
where $Y_i \in \mathbb{R}^{m_i}$ is a vector of $m_i$ measurements from group $i$; $\mu_i(\theta) \in \mathbb{R}^{m_i}$ is a model for the mean of $Y_i$ with respect to an unknown parameter $\theta \in \mathbb{R}^n$; $V_i(\theta) \in \mathbb{R}^{m_i \times m_i}$ is a model for the covariance of $Y_i$; and $D_i(\theta)$ is the Jacobian of $\mu_i(\theta)$ with respect to $\theta$.

For statistical reasons however \cite[see][Chapter 9]{mccullagh1989glm}, finding any root of \cref{eq:general-gee} is generally not enough, and a minimizer of an objective (when it exists) is desired. One way of formulating an objective is through path integrals: given some reference value $\theta_{\reference}$ and path $C \subset \mathbb{R}^n$, with smooth parametrization $p(t) : [0,1] \to \mathbb{R}^{n}$, $p(0) = \theta_{\reference}$ and $p(1) = \theta$, the objective can be defined as
\begin{equation} \label{eq:general-gee-path-form}
    F(\theta) = \int_C \dot{F} \cdot dp(t) = \int_0^1 \dot{F}(p(t))^\intercal \dot{p}(t)dt.
\end{equation}
Provided $\dot{F}$ is an irrotational vector field, the integral will be path independent, making $F(\theta)$ well-defined and ``act'' like a likelihood function. Unfortunately, in many situations \cref{eq:general-gee-path-form} might require expensive approximation techniques to evaluate. Therefore, this important and popular data science technique exhibits expensive objective evaluations yet cheap gradient evaluations, making our algorithm a prime choice because of its economical use of the objective. To illustrate this, we now present two GEE examples, and compare the numerical performance of our algorithm against optimization and root finding methods. The details of the experiments are in \cref{table:gee-expr}.

\begingroup
    \setlength{\tabcolsep}{10pt} % Default value: 6pt
    \renewcommand{\arraystretch}{1.5}
    \begin{table}[!ht]
        \centering
        \footnotesize
        \caption{GEE numerical experiment overview.}
        \label{table:gee-expr}
        \begin{tabular}{p{4cm} p{9cm}} \toprule
            \textbf{Problems} & Wedderburn's Leaf Blotch Model \cref{eq:wedderburn-obj}, and simplified Fieller-Creasy estimation \cref{eq:fieller-creasy-obj}. \\
            %\textbf{Algorithms} & Gradient Descent with Armijo and Wolfe line search \cite[Chapter 3]{nocedal2006numerical}, Cubic Regularization Newton's (CRN, \cite{nesterov2006cubic}), Adaptive Cubic Regularization (ACR, \cite{cartis2011cubic1}), Dynamic Method (DMC, \cite{curtis2018exploiting}), Our method (\cref{alg:novel-step-size}), Root Finding - Armijo, and Powells Dogleg \cite[Chapter 11]{nocedal2006numerical}.  \\
            \textbf{Algorithms} & Same as \cref{table:cutest-exper} with the addition of Root Finding - Armijo, and Powells Dogleg \cite[Chapter 11]{nocedal2006numerical}.\\
            \textbf{Starting Points} & Wedderburn's Example: Initial components of $\theta$ are randomly generated between $-1$ and $1$, except the first and eleventh were set to $0$. Fieller-Creasy Example: Initial $\theta$ uniformly generated in $[0,1]$.\\
            \textbf{Trials} & We randomly generate a set of $100$ starting points as described, and run each algorithm once at every point.\\
            \textbf{Termination} & $1{,}000$ iterations, or gradient tolerance of $10^{-5}$. \\
            \textbf{Data Recorded} & Number of objective and gradient evaluations, and CPU Time. \\\bottomrule
        \end{tabular}
    \end{table}
\endgroup

To summarize our experiment, for each of our GEE problems, we run our method, two first order methods, three second order methods, and two root finding methods on a set of $100$ randomly generated starting points for a maximum of $1{,}000$ iterations, or until a gradient tolerance of $10^{-5}$ is attained (a successful termination).
%We test the same first and second order methods as in \cref{sec:Results-cutest}, with the inclusion of two root finding algorithms, Gradient Descent with Armijo line search on the objective $.5\inlinenorm{\dot{F}(\theta)}_2^2$ (Root Finding - Armijo), and Powell's Dogleg method \cite[see][Chapter 11]{nocedal2006numerical}, to compare against this type of estimation procedure. 
To compare the methods, we record the number of objective and gradient evaluations, along with the CPU time until termination.

\paragraph{Wedderburn's Leaf Blotch Example} First introduced in his seminal paper to analyze a leaf disease occurring in barley plants \cite[see][]{wedderburn1974quasilikelihood}, the Leaf Blotch estimating equations and resulting optimization problem are defined for $\theta \in \mathbb{R}^{20}$ as
\begin{equation}\label{eq:wedderburn-obj}
    \dot{F}(\theta) = -\sum_{i=1}^{90} \frac{y_i-\mu_i(\theta)}{\mu_i(\theta)(1-\mu_i(\theta))} x_i, \text{\quad}
    \min_{\theta\in\mathbb{R}^{20}} F(\theta) \defeq \min_{\theta\in\mathbb{R}^{20}} \int_0^1 \dot{F}(p_{\theta}(t))^\intercal (\theta-\theta^*)dt.
\end{equation}
Here $\mu_i(\theta) = \exp(\theta^\intercal x_i)/(1 + \exp(\theta^\intercal x_i))$,\footnote{$x_i$ are covariates corresponding to $y_i$.} and $p_{\theta}(t) = t\theta + (1-t)\theta^*, ~\forall \theta \in \mathbb{R}^{20}$, where $\theta^*$ is the true minimizer.
\begin{remark}
    Wedderburn's example has an objective functions that can be found in closed form; however, for illustration, we use quadrature to approximate \cref{eq:wedderburn-obj}.
\end{remark}

In \cref{fig:wedderburn-cpu-time-results}, the CPU Time for each algorithm is presented; in \cref{fig:wedderburn-evaluation-boxplots}, we compare number of oracle evaluations for optimization algorithms. In both graphs, for each method, we only take into account trials where termination occurred by satisfaction of the gradient tolerance condition (i.e., successful termination). Note, root finding by Armijo line search is missing from the plots because more than $10{,}000$ iterations were needed to reach the gradient tolerance condition.

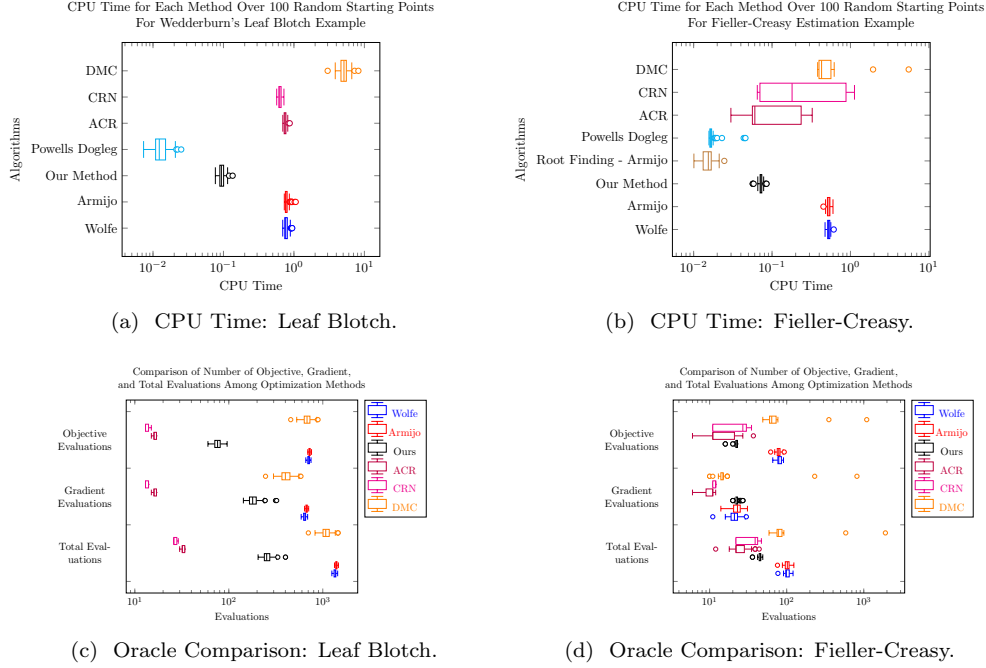
\begin{figure}
    \centering
    \begin{subfigure}{.4\textwidth}
        \begin{tikzpicture}[scale=.5]

    \pgfplotstableread[col sep=comma]{data/gee/leaf_cpu_data.csv}\cpudata
    %\pgfplotsset{every tick label/.append style={font=\tiny}}

    %acr_rel_change,crn_rel_change,dmc_rel_change

    \begin{axis}
        [
        xmode = log,
        ytick={1,2,3,4,5,6,7},
        yticklabels={Wolfe, Armijo, Our Method, Powells Dogleg, ACR, CRN, DMC},
        title style={align=center},
        title = CPU Time for Each Method Over 100 Random Starting Points \\ For Wedderburn's Leaf Blotch Example,
        ylabel = Algorithms,
        xlabel = CPU Time
        ]
        \addplot[boxplot, draw = blue, fill = white] table [y=wolfe] {\cpudata};
        \addplot[boxplot, draw = red, fill = white] table [y=armijo] {\cpudata};
        \addplot[boxplot, draw = black, fill = white] table [y=GD Triggers] {\cpudata};
        \addplot[boxplot, draw = cyan, fill = white] table [y=powells dogleg] {\cpudata};
        \addplot[boxplot, draw = purple, fill = white] table [y=Adaptive Cubic Regularization] {\cpudata};
        \addplot[boxplot, draw = magenta, fill = white] table [y=Cubic Regularized Newton] {\cpudata};
        \addplot[boxplot, draw = orange, fill = white] table [y=Dynamic Method: Curtis and Robinson] {\cpudata};
    \end{axis}
\end{tikzpicture}
        \caption{\label{fig:wedderburn-cpu-time-results} CPU Time: Leaf Blotch.}
    \end{subfigure}
    \begin{subfigure}{.4\textwidth}
        \begin{tikzpicture}[scale=.5]

    \pgfplotstableread[col sep=comma]{data/gee/fieller_cpu_data.csv}\cpudata
    %\pgfplotsset{every tick label/.append style={font=\tiny}}

    %acr_rel_change,crn_rel_change,dmc_rel_change

    \begin{axis}
        [
        xmode = log,
        ytick={1,2,3,4,5,6,7,8},
        yticklabels={Wolfe, Armijo, Our Method, Root Finding - Armijo, Powells Dogleg, ACR, CRN, DMC},
        title style={align=center},
        title = CPU Time for Each Method Over 100 Random Starting Points \\ For Fieller-Creasy Estimation Example,
        ylabel = Algorithms,
        xlabel = CPU Time
        ]
        \addplot[boxplot, draw = blue, fill = white] table [y=wolfe] {\cpudata};
        \addplot[boxplot, draw = red, fill = white] table [y=armijo] {\cpudata};
        \addplot[boxplot, draw = black, fill = white] table [y=GD Triggers] {\cpudata};
        \addplot[boxplot, draw = brown, fill = white] table [y=armijo_root] {\cpudata};
        \addplot[boxplot, draw = cyan, fill = white] table [y=powells dogleg] {\cpudata};
        \addplot[boxplot, draw = purple, fill = white] table [y=Adaptive Cubic Regularization] {\cpudata};
        \addplot[boxplot, draw = magenta, fill = white] table [y=Cubic Regularized Newton] {\cpudata};
        \addplot[boxplot, draw = orange, fill = white] table [y=Dynamic Method: Curtis and Robinson] {\cpudata};
    \end{axis}
\end{tikzpicture}
        \caption{\label{fig:fieller-creasy-cpu} CPU Time: Fieller-Creasy.}
    \end{subfigure}
    \begin{subfigure}{.4\textwidth}
        \quad\hspace{1pt}
        \begin{tikzpicture}[scale=.4]

    \pgfplotstableread[col sep=comma]{data/gee/total_evals_leaf.csv}\totalevals
    \pgfplotstableread[col sep=comma]{data/gee/func_evals_leaf.csv}\funcevals
    \pgfplotstableread[col sep=comma]{data/gee/grad_evals_leaf.csv}\gradevals

    \begin{axis}[
        legend entries = {A,B,C,D,E,F},
        title style={align=center},
        title = {Comparison of Number of Objective, Gradient, \\ and Total Evaluations Among Optimization Methods},
        xmode = log,
        boxplot/draw direction=x,
        xlabel={Evaluations},
        height=8cm,
        boxplot,
        ytick={0,1,2,...,10},
        y tick label as interval,
        yticklabels={%
            {Total Evaluations},%
            {Gradient Evaluations},%
            {Objective Evaluations},%
        },
        y tick label style={
                text width=2.5cm,
                align=center
        },
        cycle list={{blue},{red},{black},{purple},{magenta},{orange}},
        legend pos=outer north east,
        ]
        \setcounter{iloop}{0}
        \foreach \Color/\Text in {blue/Wolfe,red/Armijo,black/Ours,purple/ACR,magenta/CRN,orange/DMC}
        {\edef\temp{\noexpand\addlegendimage{boxplot legend=\Color}
        \noexpand\addlegendentry[\Color]{\Text}}
        \temp}
        
        \addplot[boxplot={draw position = .142, box extend = .125}, draw = blue, fill = white] table [y="wolfe"] {\totalevals};
        \addplot[boxplot={draw position = .2857, box extend = .125}, draw = red, fill = white] table [y="armijo"] {\totalevals};
        \addplot[boxplot={draw position = .42857, box extend = .125}, draw = black, fill = white] table [y="GD Triggers"] {\totalevals};
        \addplot[boxplot={draw position = .5714, box extend = .125}, draw = purple, fill = white] table [y="Adaptive Cubic Regularization"] {\totalevals};
        \addplot[boxplot={draw position = .7143, box extend = .125}, draw = magenta, fill = white] table [y="Cubic Regularized Newton"] {\totalevals};
        \addplot[boxplot={draw position = .85714, box extend = .125}, draw = orange, fill = white] table [y="Dynamic Method: Curtis and Robinson"] {\totalevals};

        \addplot[boxplot={draw position = 2.142,box extend = .125},draw = blue, fill = white] table [y="wolfe"] {\funcevals};
        \addplot[boxplot={draw position = 2.2857,box extend = .125}, draw = red, fill = white] table [y="armijo"] {\funcevals};
        \addplot[boxplot={draw position = 2.42857,box extend = .125}, draw = black, fill = white] table [y="GD Triggers"] {\funcevals};
        \addplot[boxplot={draw position = 2.5714, box extend = .125}, draw = purple, fill = white] table [y="Adaptive Cubic Regularization"] {\funcevals};
        \addplot[boxplot={draw position = 2.7143, box extend = .125}, draw = magenta, fill = white] table [y="Cubic Regularized Newton"] {\funcevals};
        \addplot[boxplot={draw position = 2.85714, box extend = .125}, draw = orange, fill = white] table [y="Dynamic Method: Curtis and Robinson"] {\funcevals};

        \addplot[boxplot={draw position = 1.142,box extend = .125},draw = blue, fill = white] table [y="wolfe"] {\gradevals};
        \addplot[boxplot={draw position = 1.2857,box extend = .125}, draw = red, fill = white] table [y="armijo"] {\gradevals};
        \addplot[boxplot={draw position = 1.42857,box extend = .125}, draw = black, fill = white] table [y="GD Triggers"] {\gradevals};
        \addplot[boxplot={draw position = 1.5714, box extend = .125}, draw= purple, fill = white] table [y="Adaptive Cubic Regularization"] {\gradevals};
        \addplot[boxplot={draw position = 1.7143, box extend = .125}, draw = magenta, fill = white] table [y="Cubic Regularized Newton"] {\gradevals};
        \addplot[boxplot={draw position = 1.85714, box extend = .125}, draw = orange, fill = white] table [y="Dynamic Method: Curtis and Robinson"] {\gradevals};
    \end{axis}

\end{tikzpicture}
        \caption{\label{fig:wedderburn-evaluation-boxplots} Oracle Comparison: Leaf Blotch.}
    \end{subfigure}
    \begin{subfigure}{.4\textwidth}
        \quad\quad\quad
        \begin{tikzpicture}[scale=.4]

    \pgfplotstableread[col sep=comma]{data/gee/total_evals_fieller.csv}\totalevals
    \pgfplotstableread[col sep=comma]{data/gee/func_evals_fieller.csv}\funcevals
    \pgfplotstableread[col sep=comma]{data/gee/grad_evals_fieller.csv}\gradevals

    \begin{axis}[
        legend entries = {A,B,C,D,E,F},
        title style={align=center},
        title = {Comparison of Number of Objective, Gradient, \\ and Total Evaluations Among Optimization Methods},
        xmode = log,
        boxplot/draw direction=x,
        xlabel={Evaluations},
        height=8cm,
        boxplot,
        ytick={0,1,2,...,10},
        y tick label as interval,
        yticklabels={%
            {Total Evaluations},%
            {Gradient Evaluations},%
            {Objective Evaluations},%
        },
        y tick label style={
                text width=2.5cm,
                align=center
        },
        cycle list={{blue},{red},{black},{purple},{magenta},{orange}},
        legend pos=outer north east,
        ]
        \setcounter{iloop}{0}
        \foreach \Color/\Text in {blue/Wolfe,red/Armijo,black/Ours,purple/ACR,magenta/CRN,orange/DMC}
        {\edef\temp{\noexpand\addlegendimage{boxplot legend=\Color}
        \noexpand\addlegendentry[\Color]{\Text}}
        \temp}
        
        \addplot[boxplot={draw position = .142, box extend = .125}, draw = blue, fill = white] table [y="wolfe"] {\totalevals};
        \addplot[boxplot={draw position = .2857, box extend = .125}, draw = red, fill = white] table [y="armijo"] {\totalevals};
        \addplot[boxplot={draw position = .42857, box extend = .125}, draw = black, fill = white] table [y="GD Triggers"] {\totalevals};
        \addplot[boxplot={draw position = .5714, box extend = .125}, draw = purple, fill = white] table [y="Adaptive Cubic Regularization"] {\totalevals};
        \addplot[boxplot={draw position = .7143, box extend = .125}, draw = magenta, fill = white] table [y="Cubic Regularized Newton"] {\totalevals};
        \addplot[boxplot={draw position = .85714, box extend = .125}, draw = orange, fill = white] table [y="Dynamic Method: Curtis and Robinson"] {\totalevals};

        \addplot[boxplot={draw position = 2.142,box extend = .125},draw = blue, fill = white] table [y="wolfe"] {\funcevals};
        \addplot[boxplot={draw position = 2.2857,box extend = .125}, draw = red, fill = white] table [y="armijo"] {\funcevals};
        \addplot[boxplot={draw position = 2.42857,box extend = .125}, draw = black, fill = white] table [y="GD Triggers"] {\funcevals};
        \addplot[boxplot={draw position = 2.5714, box extend = .125}, draw = purple, fill = white] table [y="Adaptive Cubic Regularization"] {\funcevals};
        \addplot[boxplot={draw position = 2.7143, box extend = .125}, draw = magenta, fill = white] table [y="Cubic Regularized Newton"] {\funcevals};
        \addplot[boxplot={draw position = 2.85714, box extend = .125}, draw = orange, fill = white] table [y="Dynamic Method: Curtis and Robinson"] {\funcevals};

        \addplot[boxplot={draw position = 1.142,box extend = .125},draw = blue, fill = white] table [y="wolfe"] {\gradevals};
        \addplot[boxplot={draw position = 1.2857,box extend = .125}, draw = red, fill = white] table [y="armijo"] {\gradevals};
        \addplot[boxplot={draw position = 1.42857,box extend = .125}, draw = black, fill = white] table [y="GD Triggers"] {\gradevals};
        \addplot[boxplot={draw position = 1.5714, box extend = .125}, draw=purple, fill = white] table [y="Adaptive Cubic Regularization"] {\gradevals};
        \addplot[boxplot={draw position = 1.7143, box extend = .125}, draw = magenta, fill = white] table [y="Cubic Regularized Newton"] {\gradevals};
        \addplot[boxplot={draw position = 1.85714, box extend = .125}, draw = orange, fill = white] table [y="Dynamic Method: Curtis and Robinson"] {\gradevals};
    \end{axis}

\end{tikzpicture}
        \caption{\label{fig:fieller-creasy-evaluations} Oracle Comparison: Fieller-Creasy.}
    \end{subfigure}
    \caption{CPU time and oracle evaluations between our method and other algorithms for Wedduerburn's example and the Fieller-Creasy estimation problem.}
\end{figure}

From \cref{fig:wedderburn-cpu-time-results}, DMC takes the longest; a surprising cluster of equally performing methods --- CRN, ACR, and Gradient Descent using Armijo and Wolfe line search --- are faster than DMC; our procedure is faster than all those using objective function information; and Powell's Dogleg method, which does not require any objective function information, is the quickest. 
For first-order methods, these relative CPU times are readily explained by the number of objective evaluations required (see \cref{fig:wedderburn-evaluation-boxplots}); whereas, for second-order methods, the slower CPU times (despite fewer evaluations) are caused by expensive sub-problem solvers. For this example, Powell's dogleg method seems to be the best choice, followed by our procedure; however, as we will see in the next example, Powell's dogleg method often finds roots corresponding to maximizers, whereas our solutions correspond to the minimizer.

% \begin{figure}
%     \centering
%     \input{figure/gee_wedderburn/evaluations_boxplots.tex}
%     \caption{\label{fig:wedderburn-evaluation-boxplots} Boxplots of number of objective, gradient, and total evaluations for each optimization algorithm over successful terminated attempts.}
% \end{figure}

\paragraph{Fieller-Creasy Ratio Estimation Example} We now consider the simplified Fieller-Creasy problem \cite[see][Chapter 9, \S 4]{mccullagh1989glm}. The estimating equation and resulting objective function (by the fundamental theorem of calculus) for $\theta \in \mathbb{R}$, provided $N \in \mathbb{N}$ datapoints, $\{(Y_{i1}, Y_{i2})\}_{i=1}^N$, are
\begin{equation}\label{eq:fieller-creasy-obj}
    \dot{F}(\theta) = -\sum_{i=1}^N \frac{(Y_{i2}+\theta Y_{i1})(Y_{i1} - \theta Y_{i2})}{\sigma^2(1+\theta^2)^2} \text{\quad and \quad} \min_{\theta\in\mathbb{R}} F(\theta) \defeq \min_{\theta\in\mathbb{R}} \int_{0}^\theta \dot{F}(t) dt.
\end{equation}
% Since this is one dimensional, we use the fundamental theorem of calculus to define the objective as
% \begin{equation} \label{eq:fieller-creasy-obj}
%     \min_{\theta\in\mathbb{R}} F(\theta) \defeq \min_{\theta\in\mathbb{R}} \int_{0}^\theta \dot{F}(t) dt.
% \end{equation}

\begin{remark}
    Fieller-Creasy has an objective functions that can be found in closed form; however, for illustration, we use quadrature to approximate \cref{eq:fieller-creasy-obj}.
\end{remark}

We use the same experimental design as in \cref{table:gee-expr}, and use a simulated dataset of 50 points, where $\{(Y_{i1}, Y_{i2})\}_{i=1}^{50}$ are independent, normally distributed random variables, with means $\{(\mu_i, \mu_i/5)\}_{i=1}^{50}$, and standard deviation of $.05$. The means are generated by taking $50$ points evenly spaced between $1$ and $3$. Generating the random datapoints in this way, there will be two stationary points, one at $\approx 5$ and another at $\approx -0.2$, the first corresponding to a minimum and the second corresponding to a maximum.

Finally, CPU times are plotted in \cref{fig:fieller-creasy-cpu}, and the number of oracle evaluations are plotted for the optimization method in \cref{fig:fieller-creasy-evaluations}. In both plots and for each algorithm, we only include counts where the algorithm had a successful termination event, and were close to the approximate minimizer.\footnote{The criterion for an approximate minimizer being successful termination, and the terminal iterate between $4.9$ and $5$. For an approximate maximizer, the criterion having successful termination, and the terminal iterate between $-.2$ and $-.21$.\label{fn-min-max-def}} Note, that root finding using Armijo line search is now present in the comparison as well.

In \cref{fig:fieller-creasy-cpu}, as before we see root finding methods are the fastest, followed now by adaptive cubic regularization and our method, then trailed by the remaining first-order and second-order algorithms. We remark here that ACR has an average time that is \emph{marginally} faster than our method, however this is a one dimensional problem, and as we have seen with the previous example the subproblems can get expensive as the dimension grows. Again, the root finding methods would seem to be the more favorable choice as they are fastest; however, the root finding methods are less reliable in finding a minimizer. Specifically, in \cref{table:gee-terminal-iterate-state}, we count the number of times a solver approximately finds a minimizer, a maximizer, or neither.\cref{fn-min-max-def} We readily observe that the root-finding methods tend to be substantially less reliable in comparison to our procedure. Hence, despite their faster speed, our procedure is favorable in both speed and reliability.

% \begin{figure}
%     \centering
%     \input{figure/gee_fiellercreasy/cpu_comp_boxplots.tex}
%     \caption{\label{fig:fieller-creasy-cpu} Boxplots comparing CPU time for each algorithm over the successful termination attempts.}
% \end{figure}

% \begin{figure}
%     \centering
%     \input{figure/gee_fiellercreasy/evaluations_boxplots.tex}
%     \caption{\label{fig:fieller-creasy-evaluations} Boxplots of number of objective, gradient, and total evaluations for each optimization algorithm over successful terminated attempts.}
% \end{figure}

\begin{table}[H]
    \centering
    \footnotesize
    \caption{Categorization of Terminal Points}
    \label{table:gee-terminal-iterate-state}
    \begin{tabular}{c c c c} \toprule
        \textbf{Algorithm} & \textbf{Approximate Minimizer} & \textbf{Approximate Maximizer} & \textbf{Neither} \\\midrule
        DMC & 79 & 0 & 21\\
        CRN & \textbf{100} & 0 & 0\\
        ACR & 96 & 0 & 4\\
        Powell's Dogleg & 59 & 41 & 0 \\
        Root Finding - Armijo & 39 & 16 & 45 \\
        Our Method & \textbf{100} & 0 & 0\\
        GD with Armijo & 27 & 0 & 73\\
        GD with Wolfe & 23 & 0 & 77 \\\bottomrule
    \end{tabular}
\end{table}

\paragraph{In Summary} Using two GEE examples, we have compared our procedure against two first order optimization algorithms, three second order optimization algorithms, and two root finding methods. In comparison to the optimization methods, our procedure is faster and just as reliable in finding a local minimizer. In comparison to the root finding problems, our procedure is slower but is more reliable in finding a local minimizer. Thus, for the needs of such data science problems, our procedure provides the fastest and most reliable solver. 

\section{Conclusion}  \label{sec:Conclusion}
We remarked that many contemporary and traditional optimization methods face challenges for optimization problems arising in data science \cite[see][]{varner2024challenges}.
To address this gap, we presented a new, general optimization methodology (\cref{alg-general-algorithm}) that is better suited to such problems. We proved global convergence results (see \cref{result-gradient}) under reasonable assumptions for data science scenarios. Furthermore, we specialized this framework, and developed a novel step size procedure using negative gradient directions (\cref{alg:novel-step-size}) that is not only extremely competitive on general optimization problems from the CUTEst library (\cref{sec:Results-cutest}), but is also superior to the alternatives on target data science applications (see \cref{sec:Results-gee}). Several open questions arise from this work. Methodologically, we hope to extend our general framework to develop novel step size routines under realistic assumptions for other optimization approaches (e.g., coordinate descent). Theoretically, we hope to provide local convergence results for \cref{alg-general-algorithm,alg:novel-step-size} for different step direction choices; study how the gradient behaves around stationary points; and study the divergence regime. Lastly, in the application setting, it would be interesting to scale our algorithm to large instances of estimating equation problems to develop new insights to applied questions.

\bibliographystyle{siamplain}
\bibliography{references}

\end{document}